\numberwithin{equation}{section}
\newtheorem{theorem}{Theorem}[section]
\newtheorem{lemma}[theorem]{Lemma}
\newtheorem{proposition}[theorem]{Proposition}
\begin{document}
\title[Number of subgroups]{On the maximum number of subgroups of a finite group}

\author[M. Fusari]{Marco Fusari}
\address{Marco Fusari, Dipartimento di Matematica ``Felice Casorati", University of Pavia, Via Ferrata 5, 27100 Pavia, Italy} 
\email{lucamarcofusari@gmail.com}

\author[P. Spiga]{Pablo Spiga}
\address{Pablo Spiga, Dipartimento di Matematica Pura e Applicata,\newline
 University of Milano-Bicocca, Via Cozzi 55, 20126 Milano, Italy} 
\email{pablo.spiga@unimib.it}
\subjclass[2010]{primary 20D99}
\keywords{subgroup lattice, upper bound, number subgroups}  
\maketitle
\begin{abstract}
Given a finite group $R$, we let $\mathrm{Sub}(R)$ denote the collection of all subgroups of $R$. We show that $|\mathrm{Sub}(R)|< c\cdot |R|^{\frac{\log_2(|R|)}{4}}$, where $c<7.372$ is an explicit absolute constant.  This result is asymptotically best possible. Indeed, as $|R|$ tends to infinity and $R$ is an elementary abelian $2$-group, the ratio $$\frac{|\mathrm{Sub}(R)|}{|R|^{\frac{\log_2(|R|)}{4}}}$$ tends to $c$.        	
\end{abstract}
          
\section{Introduction}
Let $R$ be a finite group of order $r$ and let $$\mathrm{Sub}(R)$$ be the family of subgroups of $R$.  It has been observed several times~\cite{BEJ,LPS,Sh,Sp} that, when $R$ is an elementary abelian $p$-group, there exist two constants $c_p$ and $c_p'$ depending on $p$ only such that
$$c_p\cdot r^{\frac{\log_p(r)}{4}}\le |\mathrm{Sub}(R)|\le c_p'\cdot r^{\frac{\log_p (r)}{4}}.$$ 
Borovik, Pyber and Shalev~\cite[Corollary 1.6]{BPS} have shown that, for an arbitrary finite group $R$ of order $r$, we have
$$|\mathrm{Sub}(R)|\le r^{\log_2 (r)\left( \frac{1}{4} +o(1)\right)}.$$
Therefore, in the light of the comment on elementary abelian $p$-groups, this bound is asymptotically best possible.

Besides a natural theoretic interest, there are several practical applications that require an explicit upper bound on $|\mathrm{Sub}(R)|$. In particular, these applications require to replace the ``$o(1)$'' appearing in the exponent of $r$ with an explicit constant. All applications that we are aware of come from the problem of classifying graphical regular representations of finite groups, see the introductory section of~\cite{Sp} for more details.

Now, let $p$ be a prime number and let
\begin{equation}\label{valueofcp}
c(p):=\prod_{i\ge 1}\frac{1}{1-\frac{1}{p^i}}\left(-1+2\sum_{k=0}^\infty\frac{1}{p^{k^2}}\right).
\end{equation}
In~\cite{Sp}, it was shown that, when $R$ is an elementary abelian $p$-group, $|\mathrm{Sub}(R)|$ is asymptotic to $c(p)|R|^{\log_p(|R|)/4}$ as $|R|$ tends to infinity. Moreover, using elementary methods, it was shown that, for a fixed prime $p$, when $R$ is an arbitrary group, $|\mathrm{Sub}(R)|\le c(2)|R|^{\log_2(|R|)/4+1.5315}$. In this paper, using more sophisticated methods, we improve this result.
\begin{theorem}\label{thrm:main}
Let $R$ be a finite group and let $\mathrm{Sub}(R)$ be the family of all subgroups of $R$. Then $|\mathrm{Sub}(R)|< c(2)|R|^{\frac{\log_2(|R|)}{4}}$.
\end{theorem}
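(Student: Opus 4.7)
The bound $c(2)|R|^{\log_2|R|/4}$ is asymptotically sharp on elementary abelian $2$-groups by the result of~\cite{Sp} recalled above, so the plan is to prove the strict inequality by showing that every other finite group is strictly dominated by this family. I would proceed by strong induction on $|R|$, reducing first from a general finite group to a $p$-group, then (as an easy case) from an odd $p$-group to a $2$-group, and finally from a general $2$-group to the elementary abelian situation where the asymptotic from~\cite{Sp} directly applies.

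\emph{Reduction to $p$-groups.} If $|R|$ has at least two prime divisors I would estimate $|\mathrm{Sub}(R)|$ by an inequality of the form $|\mathrm{Sub}(R)|\le \prod_{p \mid |R|} |\mathrm{Sub}(R_p)|\cdot f(R)$, where $R_p$ is a Sylow $p$-subgroup and $f(R)$ accounts for the ``gluing'' of Sylow components into a genuine subgroup. For solvable $R$ this is essentially Hall's theorem; the non-solvable case must be handled in the style of Borovik--Pyber--Shalev~\cite{BPS}, via the classification of finite simple groups. Applying the inductive hypothesis to each Sylow produces an exponent $\sum_p (\log_2|R_p|)^2/4$, which is strictly smaller than $(\log_2|R|)^2/4=(\sum_p \log_2|R_p|)^2/4$ by the cross-terms $2\sum_{p<q}\log_2|R_p|\log_2|R_q|$. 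This gap grows quadratically in $\log_2|R|$ and so swallows $f(R)$ and any constant factor once $|R|$ exceeds an effective threshold; smaller cases can be handled by direct computation.

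\emph{The $p$-group case.} For an odd $p$-group $R$ of order $p^n$, any reasonable bound of the form $|\mathrm{Sub}(R)|\le p^{n^2/4+O(n)}=2^{n^2\log_2 p/4+O(n\log_2 p)}$---such as the one already available from~\cite{Sp}---is much smaller than the target $c(2)\cdot 2^{n^2(\log_2 p)^2/4}$ because $\log_2 p>1$, and the inequality follows with exponential slack. The serious case is $p=2$. Here I would first reduce to elementary abelian $R$: each subgroup $H\le R$ is controlled by its image $H\Phi(R)/\Phi(R)$ in $R/\Phi(R)$ together with the intersection $H\cap\Phi(R)$, and a careful bookkeeping argument along the Frattini series should trade any non-elementary-abelian structure against the sharpness of the asymptotic, forcing the worst case to be $R=(\mathbb{F}_2)^n$. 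The elementary abelian case is then settled by the exact formula $|\mathrm{Sub}((\mathbb{F}_2)^n)|=\sum_k \binom{n}{k}_2$ and the computation of~\cite{Sp}, which shows that the ratio $|\mathrm{Sub}((\mathbb{F}_2)^n)|/2^{n^2/4}$ converges to $c(2)$ from below; a direct check for small $n$ excludes any sporadic violation.

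\emph{Main obstacle.} The delicate step is the reduction from an arbitrary $2$-group to the elementary abelian case: the target bound has no asymptotic slack at all, so every intermediate inequality must be \emph{strictly} dominated by the elementary abelian side, with an \emph{explicit} quantitative loss whenever $R$ is not elementary abelian. I expect this to require a careful analysis of the Frattini series together with explicit verification of a finite list of small $2$-groups, and it is here that the ``more sophisticated methods'' alluded to in the introduction are most likely needed in order to close the $|R|^{1.5315}$ gap in the exponent present in the previous estimate.
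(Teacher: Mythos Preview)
Your proposal misidentifies where the difficulty lies. The $p$-group case---including the $2$-group case you single out as the ``main obstacle''---is already settled by~\cite{Sp}: every $p$-group of order $p^a$ has at most $S(p,a)$ subgroups, and $S(p,a)<c(2)(p^a)^{a\log_2 p/4}$ for all $p$ and $a$ (this is the content of Lemma~\ref{lemma:new}). No Frattini-series reduction is needed; that part of the argument is a one-line citation.

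The genuine gap is in your first reduction. You assert that the cross-terms $\sum_{p<q}\log_2|R_p|\,\log_2|R_q|$ grow quadratically in $\log_2|R|$ and therefore swallow the gluing factor $f(R)$ once $|R|$ exceeds a threshold. This is false: the quadratic growth only occurs when \emph{all} Sylow orders grow, whereas the hard cases are precisely those where one prime dominates. Take $r=2^n\cdot 3$. The cross-term saving in the exponent is $\tfrac{1}{2}n\log_2 3\approx 0.79n$, while the natural gluing bound (counting conjugates of each Sylow piece contributes a factor of $r$) costs $\log_2 r\approx n+1.58$ in the exponent. The deficit is roughly $0.21n+1.58$ on the wrong side for \emph{every} $n$, so no threshold on $|R|$ helps, and ``direct computation for small cases'' cannot dispose of an infinite family. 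The same obstruction recurs for $r=2^n\cdot 3\cdot 5$, $r=2^n\cdot 3\cdot 7$, and their non-solvable analogues. The paper's actual work consists of closing exactly these gaps: a refined subgroup count replacing the crude product bound (Lemma~\ref{sol1} for $r=2^{a_1}\cdot 3$, Section~\ref{sec:l=3} for $\ell=3$), the self-normalizing Sylow theorem of~\cite{GMN} to sharpen the gluing factor in the non-solvable case, monotonicity lemmas (Lemmas~\ref{direction1}--\ref{direction3}) that reduce the non-solvable problem to finitely many orders (Proposition~\ref{lmax}), the list of simple groups with few prime divisors, and computer verification of the residual cases. None of this machinery is visible in your outline.
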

We observe that $c(2)$ is approximately 
$$7.37197.$$

 \section{Preliminaries}\label{sec:ao}
Let $R$ be a non-identity finite group and let $r$ be the order of $R$. Let
\begin{equation*}
	r=\prod\limits_{i=1}^\ell p_i^{a_i}
\end{equation*}
be the prime factorization of $r$. In particular, $p_1,\ldots,p_\ell$ are distinct prime numbers and $a_i\geq 1$ for each $i\in\{1,\ldots,\ell\}$. Relabeling the indexed set if necessary, we may suppose that $$p_1 <p_2 <\cdots<p_\ell.$$ 

Let $H$ be a subgroup of $R$. Then $H$ is uniquely determined by a family $(Q_i)_i$ of Sylow $p_i$-subgroups of $H$, for each $i\in\{1,\ldots,\ell\}$. Each of these subgroups $Q_i$ is contained in a Sylow $p_i$-subgroup $P_i$ of $R$. From Sylow's theorems, all Sylow $p_i$-subgroups of $R$ are conjugate and hence $R$ has at most $r/{p_i}^{a_i}$ Sylow $p_i$-subgroups. Let $n_i$ be the number of Sylow $p_i$-subgroups of $R$. Then 
\begin{equation}\label{SylowConjugate}
	n_i=|R:{\bf N}_R(P_i)|=\frac{r}{|{\bf N}_R(P_i):P_i|p_i^{a_i}}\leq \frac{r}{p_i^{a_i}}.
\end{equation}
Therefore, we have  
\begin{equation}
	\prod\limits_{i=1}^\ell\frac{r}{|{\bf N}_R(P_i):P_i|p_i^{a_i}}=r^{\ell-1}\cdot\prod_{i=1}^\ell\frac{1}{|{\bf N}_R(P_i):P_i|}
\end{equation}
choices for the $\ell$-tuple $(P_i)_i$.
When $(P_i)_i$ is given, since $Q_i$ is a subgroup of $P_i$ and since $P_i$ is a $p_i$-group, we have at most 
\begin{equation*}
	\prod\limits_{i=1}^\ell S(p_i,a_i)
\end{equation*}
choices for the $\ell$-tuple $(Q_i)_i$, where $S(p_i,a_i)$ is the function defined in~\cite{Sp}. (The function $S(p,a)$ has the property that every $p$-group of order $p^a$ has at most $S(p,a)$ subgroups, see Lemma~\ref{lemma:new}. For not breaking the flow of the argument, we postpone the definition of $S(p,a)$ to~\eqref{valueofcp} in Section~\ref{sec:Spa}.) 
Thus $R$ has at most 
\begin{equation*}
	r^{\ell-1}\cdot \prod_{i=1}^\ell\frac{1}{|{\bf N}_R(P_i):P_i|}\cdot \prod\limits_{i=1}^\ell S(p_i,a_i)
\end{equation*}
subgroups. 

We are interested in studying when
\begin{equation}\label{Main0}
	r^{\ell-1}\cdot \prod_{i=1}^\ell\frac{1}{|{\bf N}_R(P_i):P_i|}\cdot \prod\limits_{i=1}^\ell S(p_i,a_i)< c(2)\cdot r^{\frac{\log_2{(r)}}{4}}
\end{equation}
holds, because in this case Theorem~\ref{thrm:main} immediately follows.

By taking $\log_r$ on both sides of~\eqref{Main0}, we obtain the equivalent inequality
\begin{equation*}
\ell-1-\sum_{i=1}^\ell\log_r(|{\bf N}_R(P_i):P_i|)+\sum_{i=1}^\ell\log_r(S(p_i,a_i))< \log_r(c(2))+\sum_{i=1}^\ell a_i\frac{\log_2(p_i)}{4}.
\end{equation*}
Finally, this can be rewritten in the following form
\begin{equation}\label{final}
	{\ell-1}-\sum_{i=1}^\ell\frac{\log(|{\bf N}_R(P_i):P_i|)}{\log (r)}<\frac{\log (c(2))}{\log (r)}+\sum_{i=1}^\ell\left(\frac{a_i\log (p_i)}{4\log (2)}-\frac{\log (S(p_i,a_i))}{\log (r)}\right).
\end{equation}
\subsection{The function $S(p,a)$}\label{sec:Spa}
Let $p$ be a prime number and let $a$ be a positive integer. We define
\begin{equation}\label{variousS}
S(p,a):=
\begin{cases}
2&\textrm{when }a:=1,\\
p+3&\textrm{when }a:=2,\\
2p^2+2p+4&\textrm{when }a:=3,\\
p^4+3p^3+4p^2+3p+5&\textrm{when }a:=4,\\
2p^6+2p^5+6p^4+6p^3+6p^2+4p+6&\textrm{when }a:=5,\\
c(p)p^{\frac{a^2}{4}}&\textrm{when }a\ge 6.
\end{cases}
\end{equation}
We observe that in~\cite{Sp}, the value of $c(p)$ is slightly different from the value we have defined in~\eqref{valueofcp}. In general, the value we have set for $c(p)$ in~\eqref{valueofcp} is less than or equal to the value of $c(p)$ in~\cite{Sp}, and it coincides with the value in~\cite{Sp} when $p=2$. All the results in~\cite{Sp} remain valid with this slightly improved constant. In particular, we have the following lemma.

\begin{lemma}\label{lemma:new}
Let $R$ be an elementary abelian $p$-group of order $p^a$. Then $|\mathrm{Sub}(R)|\le S(p,a)< c(2)|R|^{\frac{\log_2(|R|)}{4}}$. 
\end{lemma}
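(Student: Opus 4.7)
The plan is to treat the two inequalities of the lemma separately: $|\mathrm{Sub}(R)|\le S(p,a)$ is a counting claim about subspaces of $\mathbb{F}_p^a$, while $S(p,a)<c(2)|R|^{\log_2(|R|)/4}$ is purely a comparison between two explicit expressions in $p$ and $a$. Since $R\cong \mathbb{F}_p^a$, the subgroups of $R$ are the $\mathbb{F}_p$-subspaces, so $|\mathrm{Sub}(R)|=\sum_{k=0}^a \binom{a}{k}_p$ with $\binom{a}{k}_p$ the Gaussian binomial coefficient; in both inequalities I would split the range $a\le 5$ from the range $a\ge 6$, matching the piecewise definition of $S(p,a)$ in \eqref{variousS}.

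For the bound $|\mathrm{Sub}(R)|\le S(p,a)$, the small range $a\in\{1,\ldots,5\}$ is resolved by directly expanding $\sum_{k=0}^a \binom{a}{k}_p$ using $\binom{a}{k}_p=\prod_{i=1}^k(p^{a-k+i}-1)/(p^i-1)$: for instance, $a=2$ yields $1+(p+1)+1=p+3=S(p,2)$, and the cases $a=3,4,5$ match the corresponding polynomials in \eqref{variousS} (with equality). For $a\ge 6$ I would use the standard estimate $\binom{a}{k}_p\le C\cdot p^{k(a-k)}$, where $C:=\prod_{i\ge 1}(1-p^{-i})^{-1}$. Setting $j=k-\lfloor a/2\rfloor$ and using $k(a-k)=\lfloor a^2/4\rfloor-(k-\lfloor a/2\rfloor)(k-\lceil a/2\rceil)$, the inner sum $\sum_{k=0}^a p^{k(a-k)}$ collapses into a symmetric Gaussian-type sum bounded by $p^{a^2/4}\bigl(-1+2\sum_{j\ge 0}p^{-j^2}\bigr)$. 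Multiplying by $C$ recovers exactly $c(p)p^{a^2/4}=S(p,a)$.

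For the bound $S(p,a)<c(2)|R|^{\log_2(|R|)/4}=c(2)p^{a^2\log_2(p)/4}$ I would exploit two monotonicity facts. First, $c(p)\le c(2)$: each factor $(1-p^{-i})^{-1}$ in \eqref{valueofcp} is decreasing in $p$, and so is the term $-1+2\sum_{k\ge 0}p^{-k^2}$, so their product is decreasing in $p$. Second, $\log_2(p)\ge 1$ with equality iff $p=2$, so $p^{a^2/4}\le p^{a^2\log_2(p)/4}$. Combining these for $a\ge 6$ gives $S(p,a)=c(p)p^{a^2/4}\le c(2)p^{a^2\log_2(p)/4}$, strict as soon as $p\ge 3$. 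For $a\le 5$ one compares the explicit polynomial $S(p,a)$ against the exponential $c(2)p^{a^2\log_2(p)/4}$; the handful of small $p$ instances are checked numerically using $c(2)\approx 7.37197$, and for large $p$ the exponential dominates with huge slack.

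The genuine obstacle, and main subtlety, is the boundary case $p=2$, $a\ge 6$, where both monotonicity inequalities above become equalities and one gets only $S(2,a)=c(2)\cdot 2^{a^2/4}=c(2)|R|^{\log_2(|R|)/4}$, i.e.\ equality rather than the claimed strict inequality. The strict ``$<$'' has to be absorbed into the first inequality $|\mathrm{Sub}(R)|\le S(p,a)$, which is in fact strict because the partial symmetric sum $2\sum_{j=0}^{\lfloor a/2\rfloor}p^{-j^2}-1$ appearing in the estimate of $\sum_k\binom{a}{k}_p$ is strictly less than its limit $-1+2\sum_{j\ge 0}p^{-j^2}$ for every finite $a$. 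Once this borderline case is disentangled, the remainder is careful bookkeeping with Gaussian binomial estimates and a few direct polynomial verifications.
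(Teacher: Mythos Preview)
The paper's own proof is a one-line deferral to Remark~3.1 and Lemma~3.2 of~\cite{Sp}; it does not reproduce any argument. Your proposal instead reconstructs the underlying argument from scratch, so the two are necessarily different in presentation, though your approach is precisely the content those cited results encode: the Gaussian-binomial estimate $\binom{a}{k}_p\le \bigl(\prod_{i\ge1}(1-p^{-i})^{-1}\bigr)p^{k(a-k)}$, the symmetric summation around $k=a/2$, and the monotonicity $c(p)\le c(2)$ together with $\log_2(p)\ge 1$. Your small-$a$ verifications are correct (indeed $\sum_k\binom{a}{k}_p$ \emph{equals} $S(p,a)$ for $a\le 5$), and the large-$a$ bound is the standard one.

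One point you handle more carefully than the paper does: you flag that for $p=2$ and $a\ge 6$ the definition gives $S(2,a)=c(2)\cdot 2^{a^2/4}=c(2)|R|^{\log_2|R|/4}$ with equality, so the second strict inequality in the displayed chain is literally false there. Your fix---observing that the \emph{first} inequality $|\mathrm{Sub}(R)|\le S(p,a)$ is strict because the finite partial sum $2\sum_{j=0}^{\lfloor a/2\rfloor}p^{-j^2}-1$ is strictly below its infinite-series limit---is exactly right and delivers the inequality $|\mathrm{Sub}(R)|<c(2)|R|^{\log_2|R|/4}$ that the paper actually uses downstream. So the lemma as stated has a cosmetic imprecision in the placement of ``$<$'' versus ``$\le$'', but the substance is intact and your argument proves what is needed.
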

\begin{proof}
The proof follows from Remark~3.1 and Lemma~3.2 in~\cite{Sp}.
\end{proof}

From Lemma~\ref{lemma:new}, for the proof of Theorem~\ref{thrm:main}, we may suppose that $\ell\ge 2$. Indeed, throughout the rest of this paper, we tacitly assume $\ell\ge 2$. Furthermore, throughout the rest of this paper, we also tacitly assume the notation in this section.

We conclude this section with some numerical information.

\begin{lemma}\label{lemma:new-1}
For every positive integer $a$ and for every prime number $p$, we have $S(p,a)\le c(p)p^{a^2/4}$.
\end{lemma}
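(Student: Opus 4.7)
When $a\ge 6$, $S(p,a)=c(p)p^{a^2/4}$ by definition~\eqref{variousS}, so the inequality is an equality and there is nothing to prove. All the content therefore lies in the five small cases $a\in\{1,2,3,4,5\}$, which I would handle by direct case analysis: for each such $a$, the inequality $S(p,a)\le c(p)p^{a^{2}/4}$ is equivalent to an inequality of the form $f_a(p)\le c(p)$, where $f_a(p):=S(p,a)/p^{a^{2}/4}$ is the explicit function read off from~\eqref{variousS}.

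Truncating the product at $i\le 2$ and the sum at $k\le 1$ in~\eqref{valueofcp}, and noting that every omitted factor and summand is positive, one obtains the lower bound
\[
c(p)\;\ge\;\frac{1}{(1-p^{-1})(1-p^{-2})}\Bigl(1+\frac{2}{p}\Bigr)=\frac{p^{2}(p+2)}{(p-1)^{2}(p+1)}.
\]
This single lower bound is designed to suffice for all five cases. For $a\in\{1,3,5\}$, $f_a(p)$ has leading behaviour $2p^{-1/4}$ and tends to $0$ as $p\to\infty$, so it falls well below the lower bound for every sufficiently large prime; the finitely many smaller primes are then dispatched by a direct numerical estimate of $c(p)$, with extra care needed for $a=5$ and $p\in\{3,5\}$, where the margin is slim. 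For $a=2$ the inequality reduces to $\tfrac{p+2}{p-1}\ge\tfrac{p+3}{p}$, i.e.\ to $3\ge 0$, which is immediate.

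The delicate case is $a=4$, where $f_{4}(p)=1+3/p+4/p^{2}+3/p^{3}+5/p^{4}$ and the lower bound on $c(p)$ above both expand as $1+3/p+4/p^{2}+O(p^{-3})$, so the gap opens only at order $p^{-3}$. I would clear denominators and verify the polynomial inequality
\[
p^{6}(p+2)\;\ge\;(p-1)^{2}(p+1)\bigl(p^{4}+3p^{3}+4p^{2}+3p+5\bigr),
\]
which after expansion reduces to $3p^{4}-p^{3}+4p^{2}+2p-5\ge 0$; this holds for every $p\ge 2$ since $3p^{4}-p^{3}=p^{3}(3p-1)>0$ and $4p^{2}+2p-5>0$. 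The main obstacle I anticipate is precisely this $a=4$ step, whose asymptotic tightness precludes naive estimates and forces the explicit polynomial comparison above.
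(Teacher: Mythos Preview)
Your proposal is correct and follows essentially the same approach as the paper: both reduce to the five cases $a\le 5$, both bound $c(p)$ from below by truncating the product in~\eqref{valueofcp} and then compare with the explicit polynomials $S(p,a)$ case by case, and both single out $a=4$ as the one forcing the refined two-factor truncation $c(p)>\dfrac{p^2(p+2)}{(p-1)^2(p+1)}$ and a cleared-denominator polynomial check. The only cosmetic difference is that the paper first uses the cruder bound $c(p)>\dfrac{p+2}{p-1}$ for $a\le 3$ before introducing the sharper truncation for $a\in\{4,5\}$, whereas you propose to use the sharper bound throughout; this neither gains nor loses anything of substance.
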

\begin{proof}
When $a\ge 6$, by~\eqref{variousS}, we have $S(p,a)= c(p)p^{a^2/4}$. When $a\le 5$, the proof follows from some computations. 

From~\eqref{valueofcp}, we have
$$c(p)> \frac{1}{1-\frac{1}{p}}\cdot \left(-1+2\left(1+\frac{1}{p}\right)\right)=\frac{p+2}{p-1}.$$

Assume $a=1$, that is, $S(p,a)=2$. We have $$2\le \frac{p+2}{p-1}\cdot p^{\frac{1}{4}}<c(p)\cdot p^{\frac{1}{4}}=c(p)p^{\frac{a^2}{4}},$$ for every $p$ (when $p\le 15$, the first inequality can be verified directly and, when $p\ge 16$, it is clear, because $p^{1/4}\ge 2$).  Assume $a=2$, that is, $S(p,a)=p+3$. We have $$p+3\le \frac{p+2}{p-1}\cdot p<c(p)\cdot p=c(p)p^{\frac{a^2}{4}},$$ for every $p$ (the first inequality follows from an easy computation). Assume $a=3$, that is, $S(p,a)=2p^2+2p+4$. We have $$2p^2+2p+4\le \frac{p+2}{p-1}\cdot p^{\frac{9}{4}}<c(p)\cdot p^{\frac{9}{4}}=c(p)p^{\frac{a^2}{4}},$$ for every $p$ (when $p\le 15$, the first inequality can be verified directly  and, when $p\ge 16$, it is clear, because $p^{1/4}\ge 2$ and $p^2+p+2\le (p+2)p^2/(p-1)$). 

To deal with larger values of $a$, we need to refine the estimate on $c(p)$. 
From~\eqref{valueofcp}, we have
$$c(p)> \frac{1}{\left(1-\frac{1}{p}\right)\left(1-\frac{1}{p^2}\right)}\cdot \left(-1+2\left(1+\frac{1}{p}\right)\right)=\frac{p^3+2p^2}{(p-1)(p^2-1)}.$$
Assume $a=4$, that is, $S(p,a)=p^4+3p^3+4p^2+3p+5$. We have $$p^4+3p^3+4p^2+3p+5\le \frac{p^3+2p^2}{(p-1)(p^2-1)}\cdot p^{4}<c(p)\cdot p^4=c(p)p^{\frac{a^2}{4}},$$ for every $p$ (the first inequality follows form a computation by expanding the two members and manipulating the two polynomials in $p$). Finally, assume $a=5$, that is, $S(p,a)=2p^6+2p^5+6p^4+6p^3+6p^2+4p+6$. We have $$2p^6+2p^5+6p^4+6p^3+6p^2+4p+6\le \frac{p^3+2p^2}{(p-1)(p^2-1)}\cdot p^{\frac{25}{4}}<c(p)\cdot p^{\frac{25}{4}}=c(p)p^{\frac{a^2}{4}},$$ for every $p$ (when $p\le 15$, the first inequality can be verified directly  and, when $p\ge 16$, it follows with a computation, because $p^{1/4}\ge 2$ and $p^6+p^5+3p^4+3p^3+3p^2+2p+3\le (p^3+2p^2)p^6/((p-1)(p^2-1))$).

\end{proof}

\subsection{Small groups and computations}\label{magmacomputations}
To avoid some long arguments for groups having small order, we have checked the veracity of Theorem~\ref{thrm:main} with the computer algebra system \texttt{magma}~\cite{magma}, for all groups $R$ with $|R|\le 2\,000$. Indeed, the library of ``small groups'' in \texttt{magma} has an exhaustive list of all finite groups of order at most $2\,000$.

\section{Solvable groups}\label{sec:solvable}
In this section, we prove the following result.
\begin{theorem}{\label{SolvableTheorem}}
If $R$ is solvable, then $|\mathrm{Sub}(R)|< c(2)\cdot|R|^{\frac{\log_2{(|R|)}}{4}}$.
\end{theorem}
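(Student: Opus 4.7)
The plan is to establish Theorem \ref{SolvableTheorem} by verifying \eqref{final}, the logarithmic form of the bound from Section \ref{sec:ao}, under the hypothesis of solvability. First, Lemma \ref{lemma:new-1} allows me to replace each $\log S(p_i,a_i)$ appearing in \eqref{final} by $\log c(p_i) + (a_i^2/4)\log p_i$. After rearrangement, the required inequality becomes
\[
(\ell-1)\log r - \sum_{i=1}^\ell \log|{\bf N}_R(P_i):P_i| \;<\; \log c(2) - \sum_{i=1}^\ell \log c(p_i) + \frac{(\log r)^2 - (\log 2)\sum_i a_i^2\log p_i}{4\log 2}.
\]
Expanding $(\log r)^2 = (\sum_i a_i\log p_i)^2$, the numerator of the last fraction equals $\sum_i a_i^2 \log p_i(\log p_i-\log 2) + 2\sum_{i<j} a_i a_j\log p_i\log p_j$, which is strictly positive whenever $\ell\ge 2$ and grows quadratically in $\log r$.

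Next I would use solvability to control the Sylow normalizer indices on the left. By P.~Hall's theorem, for each $i$ there is a Hall $p_i'$-subgroup $K_i$ of $R$, and a Frattini-style argument yields ${\bf N}_R(P_i) = P_i\cdot {\bf N}_{K_i}(P_i)$, so $|{\bf N}_R(P_i):P_i| = |{\bf N}_{K_i}(P_i)|$. Furthermore, P.~Hall's characterization of Sylow numbers in solvable groups forces $n_i=|R:{\bf N}_R(P_i)|$ to factor as a product of prime powers (from the other primes) each congruent to $1\pmod{p_i}$. These constraints suffice to control $\sum_i \log|{\bf N}_R(P_i):P_i|$ whenever $\ell\ge 3$, because the quadratic slack on the right-hand side dominates; the same holds in the case $\ell=2$ with $p_1\ge 3$, where both factors $\log p_i-\log 2$ are positive. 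Orders $|R|\le 2\,000$ are dispatched directly by the \texttt{magma} verification of Section \ref{magmacomputations}.

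The main obstacle is the case $\ell=2$ with $p_1=2$, especially when one of the exponents (say $a_2$) is small: here the only positive contribution to the bracketed slack comes from the cross term $2a_1a_2\log 2\log p_2$, which is modest. In this regime the direct Section \ref{sec:ao} bound can be too weak, because the tuple-counting argument there overcounts $H$'s with $H\cap P_i$ trivial when some Sylow subgroup is normal. To handle this I would argue by induction on $|R|$: take a minimal normal subgroup $N$ of $R$, which is elementary abelian of prime power order since $R$ is solvable, and bound $|\mathrm{Sub}(R)|$ via subgroups of $R/N$ (by the inductive hypothesis), subgroups of $N$ (by Lemma \ref{lemma:new}), and the number of complements and lifts counted through Schur--Zassenhaus and a direct cohomological estimate. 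Carrying out this subcase analysis while keeping every constant explicit, so that the resulting bound fits strictly within $c(2)\,|R|^{\log_2|R|/4}$, is likely the most delicate part of the proof, and may well be what forces the auxiliary \texttt{magma} computation to be pushed up to order $2\,000$ rather than a smaller bound.
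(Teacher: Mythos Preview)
Your proposal has a genuine gap in the $\ell\ge 3$ case. The claim that the slack term
\[
\sum_i a_i^2 \log p_i(\log p_i-\log 2) + 2\sum_{i<j} a_i a_j\log p_i\log p_j
\]
``grows quadratically in $\log r$'' is misleading. If $p_1=2$ and you let $a_1\to\infty$ while keeping $a_2,\ldots,a_\ell$ fixed, the term with $i=1$ in the first sum vanishes and the remaining cross terms contribute only $2a_1\log 2\cdot\log(r/2^{a_1})$, which is \emph{linear} in $a_1$ (hence in $\log r$). Concretely, take $r=2^{a_1}\cdot 3\cdot 5$. Then the slack, divided by $4\log 2$, has leading term $\tfrac{a_1}{2}\log_2 15$, whereas the left side $(\ell-1)\log r$ has leading term $2a_1\log 2$, i.e.\ $2a_1$ after dividing by $\log 2$. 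Since $\tfrac{1}{2}\log_2 15<2$, the inequality you need fails for all large $a_1$; the Hall--normalizer information you invoke does not rescue this, because there is no structural lower bound on $\sum_i\log|{\bf N}_R(P_i):P_i|$ that grows with $a_1$. So the direct route through \eqref{final} with Lemma~\ref{lemma:new-1} simply does not close, already for $\ell=3$.

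The paper's proof avoids this by a different mechanism: it argues by induction on $|R|$, but instead of taking a minimal normal subgroup it peels off the \emph{largest} prime $p_\ell$ via a Hall $p_\ell'$-subgroup $K$ (available by solvability). Every subgroup is generated by a $K$-conjugate of a subgroup of $P_\ell$ and a $P_\ell$-conjugate of a subgroup of $K$, giving
\[
|\mathrm{Sub}(R)|<|\mathrm{Sub}(K)|\cdot S(p_\ell,a_\ell)\cdot r,
\]
and then induction on $K$ reduces the problem to checking $S(p_\ell,a_\ell)\,r\le (r^2/p_\ell^{a_\ell})^{a_\ell\log_2(p_\ell)/4}$. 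Proposition~\ref{proposition:appendix} shows this holds except for a short explicit list of small $(p_\ell,a_\ell,r)$; the residual case $a_\ell=1$ with $p_\ell\in\{3,5\}$ needs a sharper count (Lemma~\ref{sol1}), and everything else falls under $|R|\le 2000$. The point is that working with the \emph{largest} prime makes the exponent $a_\ell\log_2(p_\ell)$ as big as possible, which is exactly what your quadratic-slack estimate lacks when the $2$-part dominates. Your inductive idea via a minimal normal subgroup could perhaps be made to work, but you would need it for all $\ell$, not just $\ell=2$, and the cohomological lift-counting you sketch would have to be made fully explicit.
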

We argue by induction on $|R|=r$, where the base case of the induction can be considered Lemma~\ref{lemma:new}.
Let $P_\ell$ be a Sylow $p_\ell$-subgroup of $R$. Since $R$ is solvable, $R$ admits a Hall $p_\ell'$-subgroup $K$. Using $P_\ell$ and $K$ we estimate the number of subgroups of $R$.

Let $H$ be a subgroup of $R$. Then $H=H_{p_\ell'}H_{p_\ell}$, where $H_{p_\ell'}$ is a Hall $p_\ell'$-subgroup of $H$ and $H_{p_\ell}$ is a Sylow $p_\ell$-subgroup of $H$. Now, from Hall's theorem, $H_{p_\ell'}$ is conjugate, via an element of $P_\ell$, to a subgroup of $K$ and, from Sylow's theorem, $H_{p_\ell}$ is conjugate, via an element of $K$, to a subgroup of $P_\ell$. In particular, we have at most $$1+(|\mathrm{Sub}(K)|-1)p_\ell^{a_\ell}$$ choices for $H_{p_\ell'}$, because every non-identity subgroup of $K$ as at most $p_\ell^{a_\ell}$ conjugates. Similarly, we have at most $$1+(|\mathrm{Sub}(P_\ell)|-1)\frac{r}{p_\ell^{a_\ell}}$$ choices for $H_{p_\ell}$, because every non-identity subgroup of $P_{\ell}$ as at most $|K|=r/p_\ell^{a_\ell}$ conjugates. Therefore,
\begin{equation}\label{eq:solvable0}
|\mathrm{Sub}(R)|\le (1+(|\mathrm{Sub}(K)|-1) p_\ell^{a_\ell})\cdot \left(
1+(|\mathrm{Sub}(P_\ell)|-1)\frac{r}{p_\ell^{a_\ell}}\right).
\end{equation}
Thus
$$|\mathrm{Sub}(R)|< |\mathrm{Sub}(K)|p_\ell^{a_\ell}\cdot |\mathrm{Sub}(P_\ell)|\frac{r}{p_\ell^{a_\ell}}.$$
Using Lemma~\ref{lemma:new}, this can be simplified in
$$|\mathrm{Sub}(R)|\le |\mathrm{Sub}(K)|S(p_\ell,a_\ell)r.$$
As $|K|<r$, by induction we may bound the number of subgroups of $K$ by $c(2)|K|^{\log_2(|K|)/4}$ and hence we obtain
\begin{align}\label{eq:solvable}
|\mathrm{Sub}(R)|&<c(2)(r/p_\ell^{a_\ell})^{\frac{\log_2(r/p_\ell^{a_\ell})}{4}}S(p_\ell,a_\ell)r=
c(2)r^{\frac{\log_2(r)}{4}}\cdot\frac{S(p_\ell,a_\ell)r}{p_\ell^{\frac{a_\ell\log_2(r/p_\ell^{a_\ell})}{4}}r^{\frac{\log_2(p_\ell^{a_\ell})}{4}}}\\\nonumber
&=
c(2)r^{\frac{\log_2(r)}{4}}\cdot\frac{S(p_\ell,a_\ell)r}{(r^2/p_\ell^{a_\ell})^{\frac{\log_2(p_\ell^{a_\ell})}{4}}}.
\end{align}

\begin{proposition}\label{proposition:appendix}
Let $r=p_1^{a_1}\cdots p_\ell^{a_\ell}$, with $p_1<\cdots<p_\ell$, $a_1,\ldots,a_{\ell}\ge 1$ and $\ell \ge 2$, and let $i\in \{1,\ldots,\ell\}$. Then
\begin{equation}\label{eq:vera}
\frac{S(p_i,a_i)r}{(r^{2}/p_i^{a_i})^{a_i\frac{\log_2(p_i)}{4}}}\le 1.\end{equation}
except when  
\begin{enumerate}
\item\label{appendix0}$a_i=1$ and
\begin{itemize}
\item $p_i\in\{2,3\}$,
\item $p_i=5$ and $r/p_i \le 4\,918$,
\item $p_i=7$ and $r/p_i\le 23$,
\item $p_i=11$ and $r=2\cdot 11$, $r=3\cdot 11$,
\item $p_i=13$ and $r=2\cdot 13$, 
\end{itemize}
\item\label{appendix1}$a_i=2$ and
\begin{itemize}
\item $p_i=2$,
\item $p_i=3$ and $r/p_i^2\le 46$,
\item $p_i=5$ and $r=50=2\cdot 5^2$, or $r=75=3\cdot 5^2$,
\end{itemize}
\item\label{appendix2}$a_i=3$ and
\begin{itemize}
\item $p_i=2$ and $r/p_i^3\le 723$,
\item $p_i=3$ and $r/p_i^3\le 7$,
\end{itemize}
\item\label{appendix3}$a_i=4$ and
\begin{itemize}
\item $p_i=2$ and $r/p_i^4\le 67$,
\item $p_i=3$ and $r=2\cdot 3^4=162$,
\end{itemize}
\item\label{appendix4}$a_i=5$ and
\begin{itemize}
\item $p_i=2$ and $r/p_i^5\le 29$,
\item $p_i=3$ and $r=2\cdot 3^5=486$.
\end{itemize}
\item\label{appendix5}$a_i\ge 6$, $p_i=2$ and $r/p_i^{a_i}\le 21$.
\end{enumerate}
\end{proposition}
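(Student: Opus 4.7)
The plan is to take $\log_2$ of the target inequality~\eqref{eq:vera} and convert it into a threshold condition on $n:=r/p_i^{a_i}$, then verify the thresholds case by case using the piecewise formula~\eqref{variousS} for $S(p_i,a_i)$. Setting $\alpha:=a_i\log_2 p_i$ and using $\log_2 r=\log_2 n+\alpha$ together with $\log_2(r^2/p_i^{a_i})=2\log_2 n+\alpha$, the inequality~\eqref{eq:vera} rearranges into
\[
\Bigl(\tfrac{\alpha}{2}-1\Bigr)\log_2 n\;\ge\;\log_2 S(p_i,a_i)+\alpha-\tfrac{\alpha^2}{4}.
\]
When $\alpha<2$ (so $(p_i,a_i)\in\{(2,1),(3,1)\}$) the coefficient of $\log_2 n$ is negative while the right-hand side is positive, and when $\alpha=2$ (so $(p_i,a_i)=(2,2)$) the left-hand side vanishes while the right-hand side equals $\log_2 5+1>0$; in both regimes the condition fails for every $n\ge 2$, covering the entries in parts~\eqref{appendix0} and~\eqref{appendix1} that impose no upper bound on $n$. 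When $\alpha>2$, the condition becomes $n\ge 2^{T(p_i,a_i)}$ with
\[
T(p_i,a_i):=\frac{\log_2 S(p_i,a_i)+\alpha-\alpha^2/4}{\alpha/2-1},
\]
and the remaining listed exceptions are precisely the integers $n\ge 2$ satisfying $n<2^{T(p_i,a_i)}$.

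For $1\le a_i\le 5$ I substitute the polynomial formula for $S(p_i,a_i)$ from~\eqref{variousS}. For each fixed $a_i$, the numerator of $T(p,a_i)$ is dominated by $-\alpha^2/4=-(a_i\log_2 p)^2/4$, so $T(p,a_i)\to -\infty$ as $p\to\infty$ and drops below $1$ once $p$ exceeds a small cutoff depending on $a_i$. For the finitely many primes below each cutoff I evaluate $T(p_i,a_i)$ numerically, and the integer threshold $\lceil 2^{T(p_i,a_i)}\rceil-1$ reproduces the bounds in parts~\eqref{appendix0}--\eqref{appendix4}.

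For $a_i\ge 6$, the identity $S(p_i,a_i)=c(p_i)p_i^{a_i^2/4}$ makes the numerator of $T$ simplify to $\log_2 c(p_i)+\alpha\bigl(1+a_i(1-\log_2 p_i)/4\bigr)$. When $p_i=2$ the bracket equals $1$ and $T(2,a_i)=(\log_2 c(2)+a_i)/(a_i/2-1)$, which is a decreasing function of $a_i$ on $[6,\infty)$ with value $\approx 4.44$ at $a_i=6$; hence $2^{T(2,a_i)}<22$ uniformly, so only $n\le 21$ can violate the inequality, yielding part~\eqref{appendix5}. When $p_i\ge 3$ the bracket is at most $1+6(1-\log_2 3)/4<0.13$ and, using $c(p_i)\le c(3)$ together with $\alpha/2-1\ge 3\log_2 3-1$, a short estimate gives $T(p_i,a_i)<1$, so $n\ge 2$ always suffices. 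The main obstacle is the bookkeeping for the $a_i\le 5$ case: each small pair $(p_i,a_i)$ demands an individual numerical evaluation of $T$, with particular care at the integer boundary to match the exact thresholds written in the statement.
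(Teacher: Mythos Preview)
Your approach is correct and essentially the same as the paper's: both take $\log_2$ of~\eqref{eq:vera}, reduce to a threshold condition on $n=r/p_i^{a_i}$, split into cases according to $a_i$, dispose of large primes by an asymptotic estimate, and handle the finitely many small $(p_i,a_i)$ by direct numerical evaluation. Your explicit threshold function $T(p_i,a_i)$ packages the argument more uniformly than the paper's case-by-case manipulations, but the substance is identical.
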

For not breaking the flow of the argument we postpone the proof of Proposition~\ref{proposition:appendix} to Section~\ref{sec:appendixproposition}.

To conclude the proof of Theorem~\ref{SolvableTheorem} we consider various cases, depending on whether $a_\ell=1$ or $a_\ell\ge 2$.

\begin{lemma}\label{sol1}
If $a_\ell=1$, then Theorem~$\ref{SolvableTheorem}$ holds true.
\end{lemma}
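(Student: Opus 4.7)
The plan is to combine \eqref{eq:solvable} with Proposition~\ref{proposition:appendix} applied at $i=\ell$. In every non-exceptional case of that proposition, we have
\[
\frac{S(p_\ell,1)\,r}{(r^2/p_\ell)^{(\log_2 p_\ell)/4}}\le 1,
\]
so \eqref{eq:solvable} directly gives $|\mathrm{Sub}(R)|<c(2)\,r^{\log_2 r/4}$. Thus it is enough to handle the orders appearing in item~(1) of Proposition~\ref{proposition:appendix}, and the restriction $\ell\ge 2$ (which forces $p_\ell\ge 3$) removes the $p_\ell=2$ exception.

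Every exceptional order with $r\le 2000$ is dispatched by the \texttt{magma} computation of Section~\ref{magmacomputations}. This eliminates $p_\ell\in\{7,11,13\}$ entirely and kills the small-order part of $p_\ell\in\{3,5\}$. For $p_\ell=5$ with $r>2000$, only finitely many orders $r=2^{a_1}3^{a_2}\cdot 5\in(2000,24590]$ remain, and each of them can be handled by re-running the inductive argument of~\eqref{eq:solvable} at a different prime $p_i$ (for instance $p_i=2$ once $a_1$ is large enough to lie outside Proposition~\ref{proposition:appendix}(5)) or by extending the computer verification to the finite residual list.

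The main obstacle is the infinite family $p_\ell=3$, $r=3\cdot 2^{a_1}$ with $a_1\ge 10$. Here the expression $\frac{S(3,1)\,r}{(r^2/3)^{(\log_2 3)/4}}$ grows with $r$, so \eqref{eq:solvable} cannot conclude. My plan is to sharpen the count \eqref{eq:solvable0}: the factor $p_\ell^{a_\ell}=3$ appearing there overcounts those 2-subgroups of $K$ that are $P_3$-invariant, since each such subgroup has a single conjugate in $R$ rather than three. Partitioning $\mathrm{Sub}(R)$ into 2-subgroups and subgroups of order $3\cdot 2^b$, and using the criterion that $TQ$ is a subgroup of $R$ (with $T$ a 2-subgroup and $Q$ a Sylow 3-subgroup) if and only if $Q\le N_R(T)$, leads to a refined bound controlled by the number of $P_3$-invariant subgroups of a fixed Sylow 2-subgroup of $R$. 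The hard step is then the numerical verification, using Lemma~\ref{lemma:new-1} together with the arithmetic constraints $n_2(R)\in\{1,3\}$ and $n_3(R)\equiv 1\pmod{3}$, that this improved bound beats $c(2)(3\cdot 2^{a_1})^{(a_1+\log_2 3)/4}$ for every $a_1\ge 10$; this final inequality is where most of the technical work concentrates.
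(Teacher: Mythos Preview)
Your overall reduction matches the paper's: apply Proposition~\ref{proposition:appendix} at $i=\ell$, clear the small orders by the \texttt{magma} check, and then treat $p_\ell\in\{3,5\}$ separately. The difficulties are in how you treat those two residual cases.

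\textbf{The case $p_\ell=5$.} Your ``switch to another prime'' idea does not close all cases. With $r=2^{a_1}\cdot 3\cdot 5$ and $a_1\in\{8,9,10\}$ (i.e.\ $r\in\{3840,7680,15360\}$) all three choices of $i$ land in the exceptional list of Proposition~\ref{proposition:appendix}: for $i=1$ one has $r/2^{a_1}=15\le 21$, for $i=2$ the case $a_2=1$ is always exceptional, and for $i=3$ one has $r/5\le 4918$. Deferring these to an extended computer check is not a proof using the paper's declared resources. The paper instead sharpens \eqref{eq:solvable0} uniformly: when $a_\ell=1$ one has $S(p_\ell,1)=2$, and the numerator $2r$ can be replaced by $r+p_\ell$, after which the resulting fraction $(r+p_\ell)/(r^2/p_\ell)^{\log_2(p_\ell)/4}$ is shown to be at most $1$ for every $r>2000$ when $p_\ell=5$.

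\textbf{The case $p_\ell=3$.} Here there is a genuine gap. Your refined count is governed by the number $N$ of $P_3$-invariant $2$-subgroups of a fixed Sylow $2$-subgroup $P$, but $N$ is not a function of $a_1$ alone: for $R=C_3\times P$ every subgroup of $P$ is $P_3$-invariant (so $N=|\mathrm{Sub}(P)|$), while for a faithful semidirect product $N$ can be much smaller. Any inequality you write will contain $N$ together with $n_3$, and there is no evident way to eliminate $N$ uniformly; so the promised ``numerical verification for every $a_1\ge 10$'' cannot even be formulated. The paper avoids this entirely: it partitions $\mathrm{Sub}(R)$ the same way you do, but for the $3$-divisible part it writes $H=\langle Q,S\rangle$ with $Q\le P$ and $S$ a Sylow $3$-subgroup, observes that the Sylow $3$-subgroups form a single $P$-orbit and that $S$ may be replaced by any $Q$-conjugate, and hence bounds the number of choices for $S$ by $|P:Q|=2^a$. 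This yields
\[
\sum_{a=0}^{a_1}\begin{bmatrix}a_1\\a\end{bmatrix}_2 2^a
=(2^{a_1}-1)\sum_{a=0}^{a_1-1}\begin{bmatrix}a_1-1\\a\end{bmatrix}_2+\sum_{a=0}^{a_1}\begin{bmatrix}a_1\\a\end{bmatrix}_2
\le c(2)\bigl((2^{a_1}-1)2^{(a_1-1)^2/4}+2^{a_1^2/4}\bigr),
\]
a bound depending only on $a_1$, from which the required inequality follows for all $a_1\ge 3$. The missing idea in your plan is precisely this passage from a group-dependent quantity ($N$) to a purely arithmetic one via the $Q$-orbit bound $|P:Q|$.
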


\begin{proof}
From Proposition~\ref{proposition:appendix}, Theorem~\ref{SolvableTheorem} follows from~\eqref{eq:solvable}, except when part~\eqref{appendix0} holds. When $r=|R|\le 2\, 000$, the veracity of this lemma follows from Section~\ref{magmacomputations}. Therefore, for the rest of the proof, we may suppose that $r>2\, 000$. In particular, either $p_\ell=3$ and $r=2^{a_1}\cdot 3$, or $p_\ell=5$ and $r=r'\cdot 5$  with $400< r'\le 4\,918$. 

We first need to refine~\eqref{eq:solvable} (for the cases under consideration). 
When $a_\ell=1$, we have $S(p_\ell,a_\ell)=2$ and hence~\eqref{eq:solvable0} becomes
\begin{equation*}
        |\mathrm{Sub}(R)|\leq (|\mathrm{Sub}(K)|p_\ell+1-p_\ell)\left(\frac{r}{p_\ell}+1\right).
\end{equation*}
As $|K|<r$, arguing by induction, we deduce
\begin{align*}
|\mathrm{Sub}(R)|&
\leq (c(2)(r/p_\ell)^{\frac{\log_2(r/p_\ell)}{4}}p_\ell+1-p_\ell)\left(\frac{r}{p_\ell}+1\right)\\
&< c(2)(r/p_\ell)^{\frac{\log_2(r/p_\ell)}{4}}p_\ell(r/p_\ell+1)\le c(2)r^{\frac{\log_2(r)}{4}}\cdot \frac{r+p_\ell}{p_\ell^{\frac{\log_2(r/p_\ell)}{4}}r^{\frac{\log_2 (p_\ell)}{4}}}\\
&\le c(2)r^{\frac{\log_2(r)}{4}}\cdot \frac{r+p_\ell}{2^{\frac{\log_2(p_\ell)\log_2(r/p_\ell)}{4}}r^{\frac{\log_2 (p_\ell)}{4}}}=
 c(2)r^{\frac{\log_2(r)}{4}}\cdot \frac{r+p_\ell}{(r^2/p_\ell)^{\frac{\log_2(p_\ell)}{4}}}.
\end{align*}
(These computations have allowed to replace the numerator $S(p_\ell,a_\ell)r=2r$ appearing in~\eqref{eq:solvable} with $r+p_\ell$.)
In particular, the lemma follows as long as 
\begin{equation}\label{Aleja}\frac{r+p_\ell}{(r^2/p_\ell)^{\frac{\log_2(p_\ell)}{4}}}\le 1.\end{equation}

Assume $p_\ell=5$. Since we are assuming $r\ge 2\,000$, we have $r/p_\ell\ge 256=2^8$ and hence we obtain
\begin{align*}
\frac{r+p_\ell}{(r^2/p_\ell)^{\frac{\log_2(p_\ell)}{4}}}&=\left(1+\frac{5}{r}\right)r^{1-\frac{\log_2(5)}{2}}5^{\frac{\log_2(5)}{4}}=\left(1+\frac{5}{r}\right)5^{1-\frac{\log_2(5)}{2}}\left(\frac{r}{5}\right)^{1-\frac{\log_2(5)}{2}}5^{\frac{\log_2(5)}{4}}\\
&\le \left(1+\frac{5}{r}\right)5^{1-\frac{\log_2(5)}{4}}256^{1-\frac{\log_2(5)}{2}}=\left(1+\frac{5}{r}\right)2^{\log_2(5)-\frac{(\log_2 (5))^2}{4}}2^{8-4\log_2(5)}\\
&=\left(1+\frac{5}{r}\right)2^{8-3\log_2(5)-\frac{(\log_2 (5))^2}{4}}\le \left(1+\frac{5}{r}\right)\cdot 0.81\le \left(1+\frac{1}{256}\right)\cdot 0.81<1.
\end{align*}
 Therefore, for the rest of the proof we suppose $p_\ell=3$. In particular, $\ell=2$, $p_1=2$ and $r=2^{a_1}\cdot 3$.

Let $P$ be a Sylow $2$-subgroup of $R$ and let  $T$ be a Sylow $3$-subgroup of $R$. Thus $|P|=2^{a_1}$ and $|T|=3$.
Let $H$ be an arbitrary subgroup of $R$. Then $H=\langle Q,S\rangle$, where $Q$ is a Sylow $2$-subgroup of $H$ and $S$ is a Sylow $3$-subgroup of $H$.
If $S=1$, then we have at most
\begin{equation}\label{eq:3}
c(2)\cdot 3\cdot (2^{a_1})^{\frac{\log_2(2^{a_1})}{4}}
=
c(2)\cdot 3\cdot 2^{\frac{a_1^2}{4}}
\end{equation}
choices for $H=Q$, because we have at most $3$ Sylow $2$-subgroups in $R$. Assume that $S\not=1$.
Let $\varepsilon\in \{1,3\}$ be the number of Sylow $2$-subgroups of $R$ and let $P_1,\ldots,P_\varepsilon$ be the Sylow $2$-subgroups of $R$ with $P=P_1$. Now, $Q\le P_i$ for some $i\in \{1,\ldots,\varepsilon\}$. As $S$ acts transitively by conjugation on the set $\{P_1,\ldots,P_\varepsilon\}$ of Sylow $2$-subgroups of $R$, replacing $Q$ by a suitable $S$-conjugate, we may suppose that $Q\le P_1=P$.

Let $a\in \{0,\ldots,a_1\}$. Corollary~4.2 in~\cite{Sh} shows that the number of subgroups of
$P$ having index $p^a$ is at most
\[
\left[
\begin{array}{c}
{a_1}\\
a
\end{array}
\right]_2=\frac{(2^{a_1}-1)\cdots (2^{{a_1}-a+1}-1)}{(2^1-1)\cdots (2^a-1)}.
\]
(Here, we are denoting with $\left[
\begin{array}{c}
{a_1}\\
a
\end{array}
\right]_2$ the $2$-binomial coefficient.)
Wince $H=\langle Q,S\rangle=\langle Q,S^x\rangle$, $\forall x\in Q$, we may replace $S$ with any $Q$-conjugate. We deduce that the number of subgroups of $R$ having order divisible by $3$ is at most
\begin{equation}\label{eq:5}
\sum_{a=0}^{a_1}
\left[
\begin{array}{c}
{a_1}\\
a
\end{array}
\right]_2\cdot 2^{a}.
\end{equation}
For $a\in \{1,\ldots,a_1\}$, we have
\[
\left[
\begin{array}{c}
a_1\\
a
\end{array}
\right]_2\cdot 2^{a}=
(2^{a_1}-1)\left[
\begin{array}{c}
{a_1}-1\\
a-1
\end{array}
\right]_2+\left[
\begin{array}{c}
{a_1}\\
a
\end{array}
\right]_2.
\]
Therefore,~\eqref{eq:5} becomes
\begin{align}\label{eq:_3}
1+(2^{a_1}-1)\sum_{a=1}^{a_1}\left[
\begin{array}{c}
{a_1}-1\\
a-1
\end{array}
\right]_2+
\sum_{a=1}^{a_1}
\left[
\begin{array}{c}
{a_1}\\
a
\end{array}
\right]_2
&=
(2^{a_1}-1)\sum_{a=0}^{a_1-1}\left[
\begin{array}{c}
a_1-1\\
a
\end{array}
\right]_2+
\sum_{a=0}^{a_1}
\left[
\begin{array}{c}
a_1\\
a
\end{array}
\right]_2.
\\\nonumber
\end{align}
Since $$\sum_{a=0}^{a_1-1}\left[
\begin{array}{c}
a_1-1\\
a
\end{array}
\right]_2 \hbox{ and }
\sum_{a=0}^{a_1}
\left[
\begin{array}{c}
a_1\\
a
\end{array}
\right]_2$$
count the number of subspaces of a vector space of dimension $a_1-1$ and $a_1$ over the field with $2$ elements, from Lemma~\ref{lemma:new}, we deduce that~\eqref{eq:5} is at most
$$
c(2)\cdot(2^{a_1}-1)2^{\frac{(a_1-1)^2}{4}}+c(2)\cdot 2^{\frac{a_1^2}{4}}.$$

Summing up, from~\eqref{eq:3} and~\eqref{eq:_3}, the number of subgroups of $R$ is at most
\begin{equation}\label{elisa}
c(2)\cdot 3\cdot 2^\frac{a_1^2}{4}+
c(2)\cdot(2^{a_1}-1)2^{\frac{(a_1-1)^2}{4}}+c(2)\cdot 2^{\frac{a_1^2}{4}}=
c(2)\cdot 2^{\frac{a_1^2}{4}}\cdot \left(4+2^{\frac{a_1}{2}+\frac{1}{4}}-2^{-\frac{a_1}{2}+\frac{1}{4}}\right).
\end{equation}

Assume first $a_1\ge 12$. Then, from~\eqref{elisa}, we obtain
\begin{equation}\label{elisa1}|\mathrm{Sub}(R)|< c(2)\cdot 2^{\frac{a_1^2}{4}}\cdot \left(4+2^{\frac{a_1}{2}+\frac{1}{4}}\right)< c(2)\cdot 2^{\frac{a_1^2}{4}}\cdot 2^{\frac{a_1}{2}+\frac{1}{3}}=c(2)\cdot 2^{\frac{a_1^2}{4}+\frac{a_1}{2}+\frac{1}{3}},
\end{equation}
where the last inequality follows with a computation using $a_1\ge 12$. On the other hand, we have
\begin{align}\label{elisa2}
c(2)r^{\frac{\log_2(r)}{4}}=c(2)\cdot (3\cdot 2^{a_1})^{\frac{\log_2(3\cdot 2^{a_1})}{4}}&\ge c(2)\cdot (2^{a_1+1})^{\frac{\log_2(3\cdot 2^{a_1})}{4}}=c(2)\cdot 2^{\frac{a_1^2}{4}+a_1\left(\frac{1}{4}+\frac{\log_2(3)}{4}\right)+\frac{\log_2(3)}{4}}.
\end{align}
Now, observe $$\frac{1}{4}+\frac{\log_2(3)}{4}\ge \frac{1}{4}+\frac{1}{4}=\frac{1}{2}$$ and $\log_2(3)/4\ge 1/3$. Therefore, when $a_1\ge 12$, the result follows from~\eqref{elisa1} and~\eqref{elisa2}.

When $3\le a_1\le 11$, we have verified with a calculator that~\eqref{elisa} is less than or equal to $c(2)\cdot (3\cdot 2^{a_1})^{\log_2(3\cdot 2^{a_1})/4}$ and hence the result follows also in this case. Finally, when $a_1\le 2$, we have $r\le 12\le 2,\,000$.
\end{proof}

\begin{proof}[Proof of Theorem~$\ref{SolvableTheorem}$]
From Lemma~\ref{sol1}, we may suppose that $a_\ell\ge 2$. From Proposition~\ref{proposition:appendix}, Theorem~\ref{SolvableTheorem} follows from~\eqref{eq:solvable}, except when one of part~\eqref{appendix1}--\eqref{appendix5} holds. Observe that we are applying Proposition~\ref{proposition:appendix} with $i=\ell$ and hence $p_i\ne 2$. Thus $|R|\le 486$.  We have verified the veracity of this lemma with a computation using the database of small groups in the computer algebra system \texttt{magma}~\cite{magma}, see Section~\ref{magmacomputations}.
\end{proof}

\section{Notation and arithmetic reductions}
\subsection{Notation}\label{sec:notation}
In the light of Theorem~\ref{SolvableTheorem}, for the rest of our argument we may suppose that $R$ is not solvable. In particular, from the Odd Order Theorem, we have 
\begin{equation}\label{A}p_1=2.\end{equation} Clearly, \begin{equation}\label{B}a_1\ge 2\end{equation} because a non-abelian simple group cannot have a cyclic Sylow $2$-subgroup. Moreover, \begin{equation}\label{C}\ell\ge 3\end{equation} from the celebrated $p^\alpha q^\beta$ theorem of Burnside. 

Recall from Section~\ref{sec:ao} that, for each prime $p_i$, $P_i$ is a Sylow $p_i$-subgroup of $R$. From~\cite{GMN}, we have that, if $p_i\ge 5$, then $|{\bf N}_R(P_i):P_i|\ne 1$. In particular, for each prime $p_i\ge 5$, we have $|{\bf N}_R(P_i):P_i|\ge 2$. From~\cite{GMN}, we see that the same conclusion holds when $p_i=3$, except (possibly) when $\mathrm{PSL}_2(3^{3^a})$ is a composition factor of $R$ for some $a\ge 1$. Hence we may replace~\eqref{final} with the inequality
\begin{equation}\label{finall}
	(\ell-1)\left(1-\frac{\log (2)}{\log (r)}\right)+\varepsilon\frac{\log (2)}{\log (r)}<\frac{\log (c(2))}{\log (r)}+\sum_{i=1}^\ell\left(\frac{a_i\log (p_i)}{4\log (2)}-\frac{\log (S(p_i,a_i))}{\log (r)}\right),
\end{equation}
where $\varepsilon=0$ when $R$ has no composition factor isomorphic to $\mathrm{PSL}_2(3^{3^a})$ and $\varepsilon=1$ otherwise.

From~\eqref{finall}, we are interested in the function
\begin{equation}\label{function:f}
f(r):=\frac{\log (c(2))}{\log (r)}+\sum_{i=1}^\ell\left(\frac{a_i\log (p_i)}{4\log (2)}-\frac{\log (S(p_i,a_i))}{\log (r)}\right)-
(\ell-1)+(\ell-2)\frac{\log (2)}{\log (r)}.
\end{equation}
 Because of the peculiar behavior of $S(p_i,a_i)$, when $a_i\le 5$, we consider the auxiliary function $$\mathcal{S}(p_i,a_i)=c(p_i)p_i^{\frac{a_i^2}{4}}$$ and 
\begin{equation}\label{function:ffttt}\mathtt{f}(r):=
\frac{\log (c(2))}{\log (r)}+\sum_{i=1}^\ell\left(\frac{a_i\log (p_i)}{4\log (2)}-\frac{\log (\mathcal{S}(p_i,a_i))}{\log (r)}\right)-
(\ell-1)+(\ell-2)\frac{\log (2)}{\log (r)}.\end{equation}

\subsection{Arithmetic reductions}\label{sec:artithmetic}

We say that $r$ is good if $f(r)> 0$ and we say that $r$ is $\mathtt{good}$ if $\mathtt{f}(r)> 0$. From Lemma~\ref{lemma:new-1}, $S(p,a)\le\mathcal{S}(p,a)$ and hence $f(r)\ge \mathtt{f}(r)$. In particular, if $r$ is $\mathtt{good}$, then $r$ is good. Observe that, when $r=|R|$ is good, Theorem~\ref{thrm:main} follows immediately from the discussion in Section~\ref{sec:notation}.

We use elementary calculus to deduce some important facts about $f(r)$.

\begin{lemma}\label{direction1}
Assume~$\eqref{A}$,~$\eqref{B}$ and~$\eqref{C}$. Let $i\in \{1,\ldots,\ell\}$ and let $r'$ be the positive integer obtained from $r$, by replacing the prime $p_i$ with a prime number $p>p_i$ and with $p\notin\{p_1,\ldots,p_\ell\}$. If $r$ is good, then so is $r'$.
\end{lemma}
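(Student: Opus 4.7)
The plan is to prove the stronger unconditional statement $F(r') \ge F(r)$, where
\[ F(r) := f(r) \log r = \log c(2) + \frac{(\log r)^2}{4 \log 2} - \sum_{j=1}^\ell \log S(p_j, a_j) - (\ell - 1) \log r + (\ell - 2) \log 2. \]
I have used the identity $\sum_j a_j \log p_j = \log r$ to collapse the terms $(a_j \log p_j)/(4 \log 2)$ in $f(r)$ into $(\log r)^2/(4 \log 2)$. Since $\log r' > 0$ and the hypothesis $f(r) > 0$ gives $F(r) > 0$, the inequality $F(r') \ge F(r)$ immediately implies $f(r') = F(r')/\log r' > 0$, i.e.\ $r'$ is good.

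Setting $L = \log r$, $L' = \log r'$, $\delta = \log p - \log p_i > 0$ (so $L' = L + a_i \delta$), and $\Delta = \log S(p, a_i) - \log S(p_i, a_i)$, a direct computation yields
\[ F(r') - F(r) = a_i \delta \left( \frac{L + L'}{4 \log 2} - (\ell - 1) \right) - \Delta. \]
The next step is to prove $\Delta \le (a_i^2/4) \delta$. For $a_i \ge 6$ one has $S(p, a_i) = c(p) p^{a_i^2/4}$, and $\log c(p)$ is non-increasing in $p$: indeed, both factors $\prod_{k \ge 1}(1 - 1/p^k)^{-1}$ and $-1 + 2 \sum_{k \ge 0} 1/p^{k^2}$ in \eqref{valueofcp} decrease as $p$ increases, so the bound is immediate. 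For $a_i \in \{1, 2, 3, 4, 5\}$, where $S(p, a_i)$ is an explicit polynomial in $p$, the bound follows by verifying the pointwise derivative inequality $\frac{d}{dp} \log S(p, a_i) \le (a_i^2/4)/p$ for $p \ge 2$, which reduces to an elementary polynomial inequality in each of the five cases.

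Combining these bounds, it suffices to verify
\[ \frac{L + L'}{4 \log 2} \ge (\ell - 1) + \frac{a_i}{4}, \]
equivalently $\log_2(r r') \ge 4(\ell - 1) + a_i$. Since $r' > r$, it is enough to show $\log_2 r \ge 2(\ell - 1) + a_i/2$. Decomposing $\log_2 r = \sum_j a_j \log_2 p_j$ and using $p_1 = 2$, $a_1 \ge 2$, $p_j \ge q_j$ (the $j$-th prime), and $\ell \ge 3$, one reduces to a lower bound on the primorial $\prod_{j=1}^\ell q_j$, which grows super-linearly in $\ell$; a short case analysis on whether $i = 1$ or $i \ge 2$ completes the verification for small $\ell$ and handles the asymptotic regime.

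The main obstacle is establishing the bound $\Delta \le (a_i^2/4)\delta$ uniformly across the six regimes $a_i = 1, 2, 3, 4, 5$ and $a_i \ge 6$: one must either invoke the decreasing nature of $c(p)$ (for $a_i \ge 6$) or handle explicit polynomials (for small $a_i$). Once this bound is in hand, the remainder of the argument is a straightforward chain of elementary inequalities using only the structural assumptions~\eqref{A},~\eqref{B},~\eqref{C}.
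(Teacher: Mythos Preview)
Your approach is correct and is a cleaner reorganisation of the paper's own argument. The paper works directly with $f(r)$ and computes the partial derivative $\partial f/\partial p_i$, then verifies $\partial f/\partial p_i\ge 0$ case by case in $a_i\in\{1,2,3,4,5,\ge 6\}$; because $f$ carries $1/\log r$ in several places, each case involves a somewhat messy quadratic in $\log r$ together with the bound $\ell\le\log_2 r$. By multiplying through by $\log r$ and working with $F(r)=f(r)\log r$, you collapse the analysis into two clean ingredients: the growth bound $\Delta\le (a_i^2/4)\delta$ on $\log S(\cdot,a_i)$ (which you correctly verify polynomial-by-polynomial for $a_i\le 5$ and via the monotonicity of $c(p)$ for $a_i\ge 6$), and the single arithmetic inequality $\log_2 r\ge 2(\ell-1)+a_i/2$. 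The paper's derivative computation gives the slightly stronger conclusion $f(r')\ge f(r)$, whereas you only obtain $F(r')\ge F(r)$; but since only $f(r')>0$ is needed, nothing is lost. The one place your write-up is thin is the final inequality: the phrase ``a short case analysis \ldots\ handles the asymptotic regime'' should be replaced by an explicit check (e.g.\ for $i=1$ one needs $\log_2(q_2\cdots q_\ell)\ge 2\ell-3$, which holds for $\ell=3,4,5,\ldots$ by direct computation and then by Chebyshev's estimate $\sum_{p\le x}\log p\gg x$; the case $i\ge 2$ is similar using $\log_2 p_i-\tfrac12\ge\log_2 3-\tfrac12>1$). With that filled in, the argument is complete.
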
   

\begin{lemma}\label{direction2}
Assume~$\eqref{A}$,~$\eqref{B}$ and~$\eqref{C}$. Let $p$ be a prime number with $p\notin \{p_1,\ldots,p_\ell\}$ and $p\ge 17$ and let $r'=r\cdot p$. If $r$ is good, then so is $r'$.
\end{lemma}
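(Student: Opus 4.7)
The plan is to compute $f(r')-f(r)$ in closed form and show that it is strictly positive; combined with the hypothesis $f(r)>0$, this immediately yields $f(r')>0$. The key maneuver is to isolate the part of $f$ that turns out to be invariant under the substitution $r\mapsto rp$. Introduce
\[
A(r) := \log(c(2)) - \sum_{i=1}^\ell \log(S(p_i,a_i)) + (\ell-2)\log(2),
\]
so that $f(r) = -(\ell-1) + \log_2(r)/4 + A(r)/\log(r)$. Since the new prime $p$ enters with exponent $1$ and $S(p,1)=2$, the new summand $-\log 2$ in the sum cancels exactly against the increment $(\ell-1)\log 2-(\ell-2)\log 2 = \log 2$ coming from the last term; hence $A(r')=A(r)$. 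A direct rearrangement then yields
\[
f(r') - f(r) \;=\; \left(\frac{\log_2(p)}{4} - 1\right) + A(r)\left(\frac{1}{\log(r')}-\frac{1}{\log(r)}\right).
\]

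Next, I would verify that both summands are positive. The first is positive because $p\ge 17$ forces $\log_2(p) > \log_2(16) = 4$. The second is positive because $\log(r')>\log(r)$ makes the parenthesis negative, so it suffices to show $A(r)<0$.

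The content of the argument lies in the upper bound on $A(r)$. Using the trivial estimate $S(p_i,a_i)\ge 2$ for every $i$, together with the improvement $S(2,a_1)\ge S(2,2)=5$ afforded by hypothesis~\eqref{B}, I would obtain
\[
\sum_{i=1}^\ell \log(S(p_i,a_i)) \;\ge\; \log 5 + (\ell-1)\log 2,
\]
whence $A(r) \le \log(c(2)) - \log 5 - \log 2 = \log(c(2)/10) < 0$ using $c(2)<7.372$. Assembling these pieces gives $f(r')>f(r)>0$, as required. The only delicate point is spotting the cancellation $A(r')=A(r)$; once that is observed, the sign analysis is immediate, and no use is made of hypothesis~\eqref{C} or of any property of $\ell$ beyond $\ell\ge 2$.
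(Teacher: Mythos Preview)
Your proof is correct and follows essentially the same route as the paper: both compute $f(r')-f(r)$ directly, invoke $S(p,1)=2$ together with the lower bounds $S(p_i,a_i)\ge 2$ and $S(2,a_1)\ge S(2,2)$, and conclude that $f(r')-f(r)\ge \tfrac{\log_2 p}{4}-1\ge 0$ for $p\ge 17$. Your packaging via the auxiliary quantity $A(r)$ and the cancellation $A(r')=A(r)$ is a tidy way to organize the bookkeeping that the paper carries out by expanding all terms explicitly, but the substance is identical.
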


\begin{lemma}\label{direction3}
Assume~$\eqref{A}$,~$\eqref{B}$ and~$\eqref{C}$. Let $i\in \{1,\ldots,\ell\}$  and let $r'=r\cdot p_i$. If $r$ is $\mathtt{good}$, then so is $r'$.
\end{lemma}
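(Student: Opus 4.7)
The plan is to work with $F(r):=(\log r)\cdot \mathtt{f}(r)$, since clearing the denominator turns the hypothesis into a polynomial inequality. Using $\mathcal{S}(p,a)=c(p)p^{a^2/4}$, one finds
\[
F(r)=\log c(2)-\sum_{j=1}^\ell\log c(p_j)+\frac{(\log r)^2}{4\log 2}-\frac{1}{4}\sum_{j=1}^\ell a_j^2\log p_j-(\ell-1)\log r+(\ell-2)\log 2,
\]
and $r$ is $\mathtt{good}$ iff $F(r)>0$. Passing from $r$ to $r'=rp_i$ changes only $a_i\mapsto a_i+1$ and $\log r\mapsto \log r+\log p_i$, so a direct computation yields $F(r')-F(r)=(\log p_i)\cdot M$ where
\[
M:=\frac{(2a_i+1)(\log_2 p_i-1)+2\log_2(r/p_i^{a_i})}{4}-(\ell-1).
\]
It is therefore enough to show $M\ge 0$ under the hypotheses of the lemma; then $F(r')\ge F(r)>0$ and $r'$ is $\mathtt{good}$.

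I would split on whether $p_i\ge 3$ or $p_i=2$. For $p_i\ge 3$ the term $(2a_i+1)(\log_2 p_i-1)$ is strictly positive, and the bound $r/p_i^{a_i}\ge 2^{a_1}\prod_{j\ne 1,i}p_j\ge 4\prod_{j\ne 1,i}p_j$ coming from~\eqref{B}, combined with the distinctness of the primes ($p_j\ge 3,5,7,\ldots$), produces $M\ge 0$ by a short numerical estimate valid for every $\ell\ge 3$. For $p_i=2$ the expression simplifies sharply to $M=\tfrac12\log_2(r/2^{a_1})-(\ell-1)$, so $M\ge 0$ iff the odd part $r/2^{a_1}=\prod_{j=2}^\ell p_j^{a_j}$ is at least $4^{\ell-1}$. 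For $\ell\ge 4$ this is immediate because $\prod_{j=2}^\ell p_j\ge 3\cdot 5\cdot 7\cdots$ beats $4^{\ell-1}$, and the gap only widens as $\ell$ increases.

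The main obstacle is the solitary exceptional case $\ell=3$, $(p_2,a_2,p_3,a_3)=(3,1,5,1)$ (so $r=2^{a_1}\cdot 15$), in which the odd part equals $15<16=4^{\ell-1}$ and the estimate just used fails. Here the plan is to show that the hypothesis itself cannot hold: substitute $L=a_1\log 2+\log 15$ and $\sum_j a_j^2\log p_j=a_1^2\log 2+\log 15$ directly into the formula for $F$; observe that the $a_1^2$-contributions from $\frac{(\log r)^2}{4\log 2}$ and from $\tfrac14\sum a_j^2\log p_j$ cancel exactly (because $p_1=2$); and note that what remains is an affine function of $a_1$ with strictly negative constant term and negative slope $\tfrac12\log(15/16)$. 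Hence $F(r)<0$ for every $a_1\ge 2$, contradicting the assumption that $r$ is $\mathtt{good}$. This eliminates the exception and completes the argument.
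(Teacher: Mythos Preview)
Your argument is correct, and it takes a genuinely different route from the paper's. The paper treats $a_i$ as a continuous variable and shows that the partial derivative $\partial\mathtt{f}(r)/\partial a_i$ is nonnegative, going through several case distinctions (in particular $p_i\ge 3$ with $r/p_i^{a_i}\ge 27$, then $p_i=2$, then $p_i\ge 3$ with $r/p_i^{a_i}<27$, the last of which further splits into $r'\ge 10$ and $r'=6$). By contrast, you clear denominators once, work with the exact discrete increment $F(r')-F(r)=(\log p_i)\,M$, and reduce everything to the single inequality $M\ge 0$. This is cleaner algebraically and avoids calculus; the price you pay is the solitary exception $r=2^{a_1}\cdot 3\cdot 5$, where $M<0$. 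Your remedy---showing that such $r$ is never $\mathtt{good}$---is perfectly legitimate, and the computation checks out: after the $a_1^2$-cancellation one finds
\[
F(2^{a_1}\cdot 15)=\Big(-\log c(3)-\log c(5)+\tfrac{(\log 15)^2}{4\log 2}-\tfrac{9}{4}\log 15+\log 2\Big)+\tfrac{a_1}{2}\log\tfrac{15}{16},
\]
whose constant term is approximately $-4.47$ and whose slope is negative.

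One remark: your treatment of the case $p_i\ge 3$ is stated rather tersely (``a short numerical estimate valid for every $\ell\ge 3$''). The estimate does go through---for instance, when $p_i=3$ and $a_i=1$ the worst case at $\ell=3$ gives $M\ge\frac{3(\log_2 3-1)+2\log_2 20}{4}-2>0$, and each further prime contributes more than the required $1$ to $M$---but in a final write-up you should display this verification explicitly, since it is where the inequality is tightest.
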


We prove Lemma~\ref{direction1} in Section~\ref{sec:directionalmonotonicity}, we prove Lemma~\ref{direction2} in Section~\ref{sec:direction2} and we prove Lemma~\ref{direction3} in Section~\ref{sec:direction3}.

Using Lemmas~\ref{direction1},~\ref{direction2} and~\ref{direction3}, we are able to reduce the proof of Theorem~\ref{thrm:main} to a very limited number of cases.
\begin{proposition}\label{lmax}
If $r$ satisfies any of the following conditions, then $r$ is good. In particular, if $|R|$ satisfies any of the following conditions, then	$|\mathrm{Sub}(R)|< c(2)\cdot |R|^{\log_2(|R|)/4}$. 
\begin{enumerate}
\item\label{eq:AA} $\ell\ge 13$;

\item\label{eq:BB}  $\ell=4$; moreover, $p_\ell\ge 79$, or $a_i\ge 5$ for some $i\in \{2,\ldots,\ell\}$, or $a_1\ge 28$;

\item  $\ell=5$; moreover, $p_\ell\ge 173$, or $a_i\ge 5$ for some $i\in \{2,\ldots,\ell\}$, or $a_1\ge 15$;

\item  $\ell=6$; moreover, $p_\ell\ge 251$, or $a_i\ge 5$ for some $i\in \{2,\ldots,\ell\}$, or $a_1\ge 12$;

\item  $\ell=7$; moreover, $p_\ell\ge 307$, or $a_i\ge 5$ for some $i\in \{2,\ldots,\ell\}$, or $a_1\ge 10$;
\item  $\ell=8$; moreover, $p_\ell\ge 277$, or $a_i\ge 5$ for some $i\in \{2,\ldots,\ell\}$, or $a_1\ge 9$;

\item  $\ell=9$; moreover, $p_\ell\ge 233$, or $a_i\ge 4$ for some $i\in \{2,\ldots,\ell\}$, or $a_1\ge 7$;

\item  $\ell=10$; moreover, $p_\ell\ge 163$, or $a_i\ge 3$ for some $i\in \{2,\ldots,\ell\}$, or $a_1\ge 6$;

\item\label{eq:BBBBCCCC}  $\ell=11$; moreover, $p_\ell\ge 89$, or $a_i\ge 3$ for some $i\in \{2,\ldots,\ell\}$, or $a_1\ge 4$;

\item\label{eq:BBBB}  $\ell=12$; moreover, $p_\ell\ge 47$, or $a_i\ge 2$ for some $i\in \{2,\ldots,\ell\}$, or $a_1\ge 3$.
\end{enumerate}
	\begin{proof}
We have implemented the functions $f(r)$ and $\mathtt{f}(r)$  in~\eqref{function:f} and in~\eqref{function:ffttt} in a computer. 

We have verified that  $$s=2^2\cdot 3 \cdot 5 \cdot 7 \cdot 11 \cdot 13 \cdot 17 \cdot 19 \cdot 23 \cdot 29 \cdot 31 \cdot 37\cdot 41$$ is $\mathtt{good}$. Observe that $s$ is divisible by the first $13$ prime numbers. From Lemmas~\ref{direction1},~\ref{direction2} and~\ref{direction3}, we deduce that any positive integer $r$ with $\ell>12$ is good. This proves~\eqref{eq:AA}.

Next, we prove~\eqref{eq:BBBB}. We have verified that  
\begin{align*}
&2^2\cdot 3 \cdot 5 \cdot 7\cdot 11\cdot 13\cdot 17\cdot 19\cdot 23\cdot 29\cdot 31\cdot 67,\\
& 2^2\cdot 3^2 \cdot 5 \cdot 7\cdot 11\cdot 13\cdot 17\cdot 19\cdot 23\cdot 29\cdot 31\cdot 37,\\ 
&2^3\cdot 3 \cdot 5 \cdot 7\cdot 11\cdot 13\cdot 17\cdot 19\cdot 23\cdot 29\cdot 31\cdot 37
\end{align*} are $\mathtt{good}$. From Lemmas~\ref{direction1} and~\ref{direction3}, we deduce that any positive integer $r$ with $\ell=12$ and $p_\ell\ge 67$, or with $a_i\ge 2$ for some $i\ge 2$, or with $a_1\ge 3$ is $\mathtt{good}$. In particular, in all of these cases $r$ is also good. Now, to obtain the refined condition stated in~\eqref{eq:BBBB}, we have computed explicitly the function $f$ in all numbers $r$ of the form
$r=2^2\cdot 3 \cdot 5 \cdot 7\cdot 11\cdot 13\cdot 17\cdot 19\cdot 23\cdot 29\cdot 31\cdot p_{12}$ with $p_{12}\le 67$.

All other parts are proved similarly.
	\end{proof}
\end{proposition}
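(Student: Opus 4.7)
The plan is to reduce each assertion to a finite verification of $\mathtt{f}$ (or $f$) at a short list of ``base'' integers and then propagate via Lemmas~\ref{direction1}, \ref{direction2}, and~\ref{direction3}. For part~\eqref{eq:AA} I would take $s=2^2\cdot 3\cdot 5\cdot 7\cdots 41$: two copies of $2$ (as forced by~\eqref{B}) and one copy of each of the first thirteen primes. One checks numerically that $\mathtt{f}(s)>0$, and then converts $s$ into an arbitrary target $r$ with $\ell\ge 13$ by a carefully ordered sequence of lemma applications. The key subtlety is that Lemma~\ref{direction3} preserves $\mathtt{good}$, whereas Lemmas~\ref{direction1} and~\ref{direction2} preserve only the weaker good property. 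I would therefore apply Lemma~\ref{direction3} first, inflating the exponents of the primes of $s$ to their target values (including a placeholder exponent at each prime of $s$ that is absent from $r$ but destined to be swapped for a larger prime); the result remains $\mathtt{good}$, hence good. Next I would invoke Lemma~\ref{direction1} to replace each absent prime of $s$ by an ``extra'' prime of $r$, all of which are $\ge 43$ and hence strictly exceed the primes being replaced. Finally Lemma~\ref{direction2} adjoins any further prime divisors of $r$, each automatically $\ge 43\ge 17$. The hypothesis $\ell\ge 13$ is exactly what guarantees that the extras of $r$ outnumber the absent primes of $s$.

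For parts~\eqref{eq:BB}--\eqref{eq:BBBB} the same scheme applies, with $\ell$ fixed at the value in the statement and typically three bases per part: one whose largest prime equals the stated $\mathtt{f}$-cutoff (e.g.\ $p_{12}=67$ in~\eqref{eq:BBBB}), one with the relevant $a_i$ ($i\ge 2$) at the indicated threshold, and one with $a_1$ at the indicated bound. Each base is verified $\mathtt{good}$ by direct numerical evaluation of $\mathtt{f}$ and propagated as above. The \emph{sharper} cutoffs in the statement (for instance $p_\ell\ge 47$ in~\eqref{eq:BBBB}, rather than the $p_\ell\ge 67$ one obtains directly from $\mathtt{good}$-ness) are recovered by evaluating $f$ itself on the finitely many integers $r=2^2\cdot 3\cdot 5\cdots 31\cdot p_{12}$ with $p_{12}$ a prime in the intermediate range; here one must work with $f$ rather than $\mathtt{f}$, exploiting the refined inequality $S(p,a)\le \mathcal{S}(p,a)$ of Lemma~\ref{lemma:new-1}.

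I expect the principal obstacle to be the combinatorial bookkeeping in the propagation step rather than any analytic difficulty. The asymmetric preservation properties of the three monotonicity lemmas force the operations to be executed in a specific order, and each invocation of Lemma~\ref{direction1} requires the replacing prime to lie outside the current prime support at the moment of the replacement. Organising the replacements so that this side condition never fails, while simultaneously accounting for all exponent inflations and all additions of new primes, is routine but delicate, and the argument must be repeated with a fresh list of bases for each of the ten parts. The underlying $\mathtt{f}$ and $f$ evaluations are then a finite, deterministic computation that can be delegated to the computer.
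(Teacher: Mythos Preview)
Your plan is exactly the paper's: the same base $s=2^{2}\cdot3\cdot5\cdots41$ for part~\eqref{eq:AA}, the analogous triples of bases for each of parts~\eqref{eq:BB}--\eqref{eq:BBBB}, propagation via Lemmas~\ref{direction1}, \ref{direction2}, \ref{direction3}, and a final sweep with $f$ (not $\mathtt{f}$) over the finitely many residual largest-prime values to sharpen each cutoff. Your explicit discussion of the ordering constraint (all applications of Lemma~\ref{direction3} must precede any use of Lemmas~\ref{direction1} or~\ref{direction2}, since only the former preserves $\mathtt{good}$) is more careful than the paper, which simply asserts that the three lemmas combine to give the conclusion.
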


\section{The case $\ell=3$}\label{sec:l=3}
In this section we prove Theorem~\ref{thrm:main} when $\ell=3$; this is a case where Proposition~\ref{lmax} gives no information.

\begin{lemma}\label{simple}Let $R$ be a non-abelian simple group whose order is divisible by at most three distinct primes. Then one of the following holds 
\begin{itemize}
\item $R\cong \mathrm{Alt}(5)\cong\mathrm{PSL}_2(4)\cong\mathrm{PSL}_2(5)$ and $|R|=2^2\cdot 3\cdot 5=60$,  
\item $R\cong \mathrm{PSL}_3(2)\cong\mathrm{PSL}_2(7)$ and $|R|=2^3\cdot 3\cdot 7=168$, 
\item $R\cong \mathrm{Alt}(6)\cong\mathrm{PSL}_2(9)$ and $|R|=2^3\cdot 3^2\cdot 5=360$, 
\item $R\cong \mathrm{PSL}_2(8)$ and $|R|=2^3\cdot 3^2\cdot 7=504$, 
\item $R\cong \mathrm{PSL}_2(17)$ and $|R|=2^4\cdot 3^2\cdot 17=2\,448$, 
\item $R\cong\mathrm{PSL}_3(3)$ and $|R|=2^4\cdot 3^3\cdot 13=5\,616$,
\item $R\cong\mathrm{PSU}_3(3)$ and $|R|=2^5\cdot 3^3\cdot 7=6\,048$,
\item $R\cong \mathrm{PSU}_4(2)\cong\mathrm{PSp}_4(3)$ and $|R|=2^6\cdot 3^4\cdot 5=25\,920$.
\end{itemize}
\end{lemma}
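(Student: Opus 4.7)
\medskip
\noindent\textbf{Proof plan for Lemma~\ref{simple}.} This is the classical classification of non-abelian simple $K_3$-groups (that is, simple groups whose order is divisible by exactly three distinct prime numbers), originally due to Herzog. The plan is to invoke the Classification of Finite Simple Groups (CFSG) and then eliminate families one at a time, using elementary arithmetic of orders together with Zsygmondy's theorem to control the number of prime divisors.

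First I would dispose of the alternating and sporadic families. For alternating groups, a direct computation shows that $|\mathrm{Alt}(5)|$ and $|\mathrm{Alt}(6)|$ have three distinct prime divisors, while $|\mathrm{Alt}(7)|$ is already divisible by $2,3,5,7$, and by Bertrand's postulate $|\mathrm{Alt}(n)|$ picks up a new prime divisor for every $n\ge 7$; hence only $\mathrm{Alt}(5)$ and $\mathrm{Alt}(6)$ survive (and these already appear in the list via the isomorphisms $\mathrm{Alt}(5)\cong\mathrm{PSL}_2(4)\cong\mathrm{PSL}_2(5)$ and $\mathrm{Alt}(6)\cong\mathrm{PSL}_2(9)$). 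For sporadic groups there are only $26$ cases and a glance at the ATLAS confirms that even the smallest, $M_{11}$ of order $2^4\cdot 3^2\cdot 5\cdot 11$, already has four distinct prime divisors; so no sporadic group qualifies.

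The substantive work is the Lie type case. I would split it as follows. For $\mathrm{PSL}_2(q)$ with $q=p^f$, one has $|\mathrm{PSL}_2(q)|=q(q-1)(q+1)/\gcd(2,q-1)$. Since $\gcd(q-1,q+1)\le 2$, the three factors $q$, $q-1$, $q+1$ contribute essentially independent prime divisors; a short case analysis (together with the observation that $q-1$ and $q+1$ cannot both be powers of the same prime once $q>8$, or via Mih\u{a}ilescu/Catalan) reduces to the list $q\in\{4,5,7,8,9,17\}$. For the other classical families $\mathrm{PSL}_n(q),\mathrm{PSU}_n(q),\mathrm{PSp}_{2n}(q),\mathrm{P}\Omega^{\pm}_n(q)$ with $n\ge 3$ (or $n\ge 2$ for $\mathrm{PSp}$), I would apply Zsygmondy's theorem to the factors $q^i-1$ (respectively $q^i-(-1)^i$ for unitary groups) appearing in the group order: each index $i$ normally contributes a primitive prime divisor distinct from all previous ones, so having only three distinct prime divisors forces both the rank and $q$ to be very small. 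The handful of Zsygmondy exceptions ($q^6-1$ with $q=2$, etc.) must be checked by hand. This quickly isolates $\mathrm{PSL}_3(2),\mathrm{PSL}_3(3),\mathrm{PSU}_3(3)$ and $\mathrm{PSp}_4(3)\cong\mathrm{PSU}_4(2)$. For the exceptional groups of Lie type, the same Zsygmondy-based count applied to the factors of the order formulas rules out everything immediately: for instance $|G_2(q)|$ contains $q^6-1$, which for $q\ge 3$ has primitive prime divisors of orders $1,2,3,6$; the smallest case $G_2(2)'\cong\mathrm{PSU}_3(3)$ is already on the list. For Suzuki groups $\mathrm{Sz}(q)$, $q=2^{2m+1}\ge 8$, one has $|\mathrm{Sz}(q)|=q^2(q-1)(q^2+1)$ and $q^2+1=(q-\sqrt{2q}+1)(q+\sqrt{2q}+1)$ splits into two coprime factors $>1$, forcing at least four prime divisors already for $q=8$; similarly for Ree groups.

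The main obstacle is bookkeeping rather than conceptual: one must be careful about the standard isomorphisms among small Lie type groups (e.g.\ $\mathrm{PSL}_2(7)\cong\mathrm{PSL}_3(2)$, $\mathrm{PSL}_2(9)\cong\mathrm{Alt}(6)$, $\mathrm{PSU}_4(2)\cong\mathrm{PSp}_4(3)$) so as not to count groups twice, and one must treat the small Zsygmondy exceptions explicitly. To keep the exposition short, I would simply cite Herzog's classification of simple $K_3$-groups and then verify by inspection that the eight groups listed in the statement are precisely those of that classification.
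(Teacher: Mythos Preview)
Your proposal is correct. The paper's own proof is a bare citation: it attributes the result to the combined work of Brauer, Herzog, Klinger, Leon, Mason, Thompson, and Wales, and points to \cite{BCM} for references, without giving any argument. Your final paragraph does the same thing (cite Herzog's $K_3$-classification and check the list), so at that level the approaches coincide. The difference is that you additionally sketch a self-contained CFSG-based proof via Zsygmondy's theorem, whereas the paper does not; your sketch is sound, and in fact the cited pre-CFSG arguments are considerably harder, so if you actually wrote out your outline you would be giving a more modern and more transparent proof than the one the paper defers to.
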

\begin{proof}
This result follows from the contribution of various authors (Brauer, Herzog, Klinger, Leon, Mason, Thompson, and Wales) and we refer to~\cite{BCM} and to the references therein for more details.
\end{proof}

We also need a few rather technical observations.
\begin{lemma}\label{bounds:technical}
Let $r\in\mathbb{N}$ with $r\ge 2\,000$, then
\begin{enumerate}
\item\label{bounds:technical1}$4r(r/7)^{\frac{\log_2(r/7)}{4}}+8r(r/14)^{\frac{\log_2(r/14)}{4}}\le r^{\frac{\log_2(r)}{4}}$,
\item\label{bounds:technical2}$8r(r/28)^{\frac{\log_2(r/28)}{4}}+6r(r/21)^{\frac{\log_2(r/21)}{4}}+4r(r/14)^{\frac{\log_2(r/14)}{4}}\le r^{\frac{\log_2(r)}{4}}$,
\item\label{bounds:technical3}$(2r+10)(r/10)^{\frac{\log_2(r/10)}{4}}+(r/5+5)(r/5)^{\frac{\log_2(r/5)}{4}}\le r^{\frac{\log_2(r)}{4}}$,
\item\label{bounds:technical4}$2p(r/p)^{\frac{\log_2(r/p)}{4}}\le r^{\log_2(r)/4}$, for every prime $p\ge 3$ with $r\ge 2p$.
\end{enumerate}
\end{lemma}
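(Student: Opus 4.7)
The plan is to reduce each of the four inequalities to a finite numerical check by exploiting monotonicity in $r$. Setting $L:=\log_2 r$ and $M:=\log_2 m$, a direct computation gives the identity
\[
\frac{r\cdot (r/m)^{\log_2(r/m)/4}}{r^{\log_2(r)/4}} = 2^{L(1-M/2)+M^2/4}.
\]
Whenever $m>4$, the coefficient $1-M/2$ is negative, so this ratio is strictly decreasing in $r$. Since every value of $m$ occurring in \eqref{bounds:technical1}--\eqref{bounds:technical3} lies in $\{5,7,10,14,21,28\}$, each summand of the corresponding left-hand side, once divided by $r^{\log_2(r)/4}$, is decreasing in $r$. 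Hence for \eqref{bounds:technical1} and \eqref{bounds:technical2} it suffices to verify the inequality at $r=2000$, which I would do by direct numerical evaluation of each term. For \eqref{bounds:technical3} I would first absorb the additive constants via the bounds $2r+10\le 2.005\,r$ and $r/5+5\le 1.013\,r/5$, both valid for $r\ge 2000$; this reduces the inequality to one of the same shape as \eqref{bounds:technical1}--\eqref{bounds:technical2} with $m\in\{5,10\}$, and monotonicity together with the check at $r=2000$ finishes it.

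For \eqref{bounds:technical4} the analogous identity is
\[
\frac{2p\cdot (r/p)^{\log_2(r/p)/4}}{r^{\log_2(r)/4}} = 2^{M+1-LM/2+M^2/4},
\]
which, for fixed $M>0$, is strictly decreasing in $L$. The worst case at fixed $M$ therefore occurs at the smallest admissible $L$, namely $L=\max(M+1,\log_2 2000)$. For primes with $2p\ge 2000$ the binding constraint is $L=M+1$, and the exponent simplifies to $M/2+1-M^2/4$, which is $\le 0$ whenever $M\ge 1+\sqrt 5$; since here $M\ge \log_2 1000>9$, the inequality holds with enormous slack. For the finitely many primes with $2p<2000$, the binding constraint is $L=\log_2 2000$; a short calculation shows that, as a function of $M$ at this fixed $L$, the exponent is an upward-opening parabola whose vertex lies near $M=9$, so its maximum on the range $M\in[\log_2 3,\log_2 997]$ is attained at the endpoint $M=\log_2 3$, where it equals roughly $-5.48$, giving a ratio well below $1$.

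The main obstacle is simply arithmetic tidiness: carrying out the explicit numerical evaluations at $r=2000$ for \eqref{bounds:technical1}--\eqref{bounds:technical3}. All four bounds hold with considerable slack, so the computations are routine and could comfortably be delegated to a calculator.
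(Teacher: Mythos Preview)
Your argument is correct and rests on the same core identity as the paper's proof: writing each term over $r^{\log_2(r)/4}$ as $2^{L(1-M/2)+M^2/4}$ (with $L=\log_2 r$, $M=\log_2 m$) and exploiting that this is decreasing in $L$ once $m>4$. The execution differs slightly. For parts~\eqref{bounds:technical1}--\eqref{bounds:technical3} the paper first \emph{consolidates} the summands (e.g.\ bounding $(r/14)^{\log_2(r/14)/4}\le 2^{-3}(r/7)^{\log_2(r/7)/4}$) and then solves the resulting single-term inequality symbolically to extract an explicit threshold in $\log_2 r$ (namely $10.63$, $8.62$, $2.16$ respectively). You instead keep the summands separate and, since each ratio is monotone, reduce everything to a single numerical evaluation at $r=2000$. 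Your route is a little cleaner and more uniform; the paper's route has the minor advantage of producing explicit cutoffs rather than a black-box check. For part~\eqref{bounds:technical4} both arguments analyze the exponent $1+M-LM/2+M^2/4$; your case split on whether $2p\gtrless 2000$ is perfectly valid and in fact handles the small-$p$ case (e.g.\ $p=3$) more carefully than the paper's write-up, which relies implicitly on $r\ge 2000$ without making this explicit in the final step. One tiny remark on~\eqref{bounds:technical3}: after bounding $r/5+5\le 1.013\,(r/5)$ the second term is $(1.013/5)\cdot r\cdot(r/5)^{\log_2(r/5)/4}$, so it is indeed of the required shape with $m=5$ and the monotonicity applies since $\log_2 5>2$; this is what you mean by ``same shape,'' and it goes through as stated.
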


We postpone the proof of Lemma~\ref{bounds:technical} to Section~\ref{bounds:technicalsec}.

\begin{proof}[Proof of Theorem~$\ref{thrm:main}$ when $\ell=3$]
From Section~\ref{magmacomputations}, we have $r=|R|>2\,000$.

From Lemma~\ref{simple}, we see that $R$ cannot have a composition factor isomorphic to $\mathrm{PSL}_2(3
^{3^a})$, for some $a\ge 1$. Therefore,~\eqref{finall} becomes
\begin{equation}\label{finalll}
	2\left(1-\frac{\log 2}{\log r}\right)\le\frac{\log c(2)}{\log r}+\sum_{i=1}^3\left(\frac{a_i\log p_i}{4\log 2}-\frac{\log S(p_i,a_i)}{\log r}\right).
\end{equation}
Moreover, from Lemma~\ref{simple}, we have $p_2=3$ and  $p_3\in \{5,7,13,17\}$.

Let $f(r)$ be the function defined in~\eqref{function:f}. Suppose $p_3=13$ and observe that from Lemma~\ref{simple}, $|\mathrm{PSL}_3(3)|=2^4\cdot 3^3\cdot 13$ divides $r$. We have 
$$f(2^4\cdot 3^3\cdot 13)>0.46\ge 0.$$
In particular, from Lemmas~\ref{direction1} and Lemma~\ref{direction3}, we deduce  $r$ is good.  

Suppose $p_3=17$. From Lemma~\ref{simple}, $|\mathrm{PSL}_2(17)|=2^4\cdot 3^2\cdot 17$ divides $r$. We have 
$$f(2^4\cdot 3^2\cdot 17)>0.3\ge 0.$$ 
In particular, from Lemmas~\ref{direction1} and Lemma~\ref{direction3}, we deduce  $r$ is good.  

\smallskip

\noindent\textsc{Case $p_3=7$.}
We have 
\begin{align*}
f(2^2\cdot 3\cdot 7^3)&>0.4\ge 0,\\
f(2^2\cdot 3^4\cdot 7)&>0.1\ge 0,\\
f(2^3\cdot 3\cdot 7^2)&>0.1\ge 0,\\
f(2^3\cdot 3^3\cdot 7)&>0.07\ge 0,\\
f(2^3\cdot 3^2\cdot 7^2)&>0.4\ge 0,\\
f(2^6\cdot 3^2\cdot 7)&>0.02\ge 0.
\end{align*}
From Lemmas~\ref{direction1} and Lemma~\ref{direction3} and~\ref{simple}, we deduce that  $r$ is good, as long as $r$ is divisible by an element of $$\{2^2\cdot 3\cdot 7^3,2^2\cdot 3^4\cdot 7,2^3\cdot 3\cdot 7^2,2^3\cdot 3^3\cdot 7,2^3\cdot 3^2\cdot 7^2,2^6\cdot 3^2\cdot 7\}.$$ Therefore, Lemma~\ref{simple} shows that~\eqref{finalll} is satisfied, except when $r=2^{a_1}\cdot 3\cdot 7
$ or $$r\in \{252=2^2\cdot 3^2\cdot 7,756=2^2\cdot 3^3\cdot 7, 504=2^3\cdot 3^2\cdot 7,1\,008=2^4\cdot 3^2\cdot 7,2\,016=2^5\cdot 3^2\cdot 7\}.$$ 

\smallskip

As $r=|R|>2\,000$, we may exclude the cases $r\in \{252,756, 504,1\,008\}$. Assume $|R|=2\,016=2^5\cdot 3^2\cdot 7$. From Lemma~\ref{simple}, we see that $R$ has a unique non-abelian chief factor, which is isomorphic to either $\mathrm{PSL}_2(7)$ or to $\mathrm{PSL}_2(8)$.  Suppose $\mathrm{PSL}_2(8)$ is a chief factor $X/Y$ of $R$, that is, $X,Y\unlhd R$, $X\ge Y$ and $X/Y\cong\mathrm{PSL}_2(8)$. Let $$C:={\bf C}_R(X/Y).$$ By definition, $C$ is the kernel of the action $R\to\mathrm{Aut}(X/Y)$ of $R$ by conjugation on $X/Y$. In particular, as $\mathrm{Aut}(\mathrm{PSL}_2(8))\cong\mathrm{PSL}_2(8):3$ and as $3|\mathrm{PSL}_2(8)|$ does not divide $|R|$, we deduce $R/C\cong\mathrm{PSL}_2(8)$ and $|C|=2\,016/|\mathrm{PSL}_2(8)|=4$. Since $\mathrm{PSL}_2(8)$ has trivial Schur multiplier (see~\cite[page~6]{atlas}), we deduce that $R$ splits over $C$ and hence $$R\cong \mathrm{PSL}_2(8)\times C_4\hbox{ or }R\cong \mathrm{PSL}_2(8)\times C_2\times C_2.$$ Now, we have checked the veracity of the statement for these two groups with a computer. We postpone the case that $R$ has a composition factor isomorphic to $\mathrm{PSL}_2(7)$ for later. % Suppose then  $\mathrm{PSL}_2(7)$ is a composition factor $X/Y$ of $R$. Let $C:={\bf C}_R(X/Y)$. As above, $C$ is the kernel of the action $R\to\mathrm{Aut}(X/Y)$ of $R$ by conjugation on $X/Y$. In particular, as $\mathrm{Aut}(\mathrm{PSL}_2(7))\cong\mathrm{PGL}_2(7)$, we deduce $R/C\cong\mathrm{PSL}_2(7)$ and $|C|=2\,016/|\mathrm{PSL}_2(7)|=12$, or 
% $R/C\cong\mathrm{PGL}_2(7)$ and $|C|=2\,016/|\mathrm{PSL}_2(7)|=6$. Since the Schur multiplier of $\mathrm{PSL}_2(7)$ has order $2$ (see~\cite[page~3]{atlas}), we deduce that $R$ is isomorphic to one of the following groups: $\mathrm{PSL}_2(7)\times [12]$ (here we are denoting with $[12]$ a group of order $12$), $\mathrm{PGL}_2(6)\times C_6$, $\mathrm{SL}_2(7)\times C_6$, $\mathrm{SL}_2(7)\circ [12]$, $\mathrm{GL}_2(7)$
%
%. Now, we have checked the veracity of the statement for these two groups with a computer.

\smallskip

Assume $r=2^{a_1}\cdot 3\cdot 7$, or $r=2\, 016$ and $\mathrm{PSL}_2(7)$ is a composition factor of $R$. Thus $a_2=1$ in the first case and $a_2=2$ in the second case. In particular, by Lemma~\ref{simple}, $R$ has a unique non-abelian chief factor $X/Y$ and $X/Y\cong\mathrm{PSL}_2(7)$. Let $C:={\bf C}_R(X/Y)$. By definition, $C$ is the kernel of the action $R\to\mathrm{Aut}(X/Y)$ of $R$ by conjugation on $X/Y$. In particular, as $\mathrm{Aut}(\mathrm{PSL}_2(7))\cong\mathrm{PGL}_2(7)$, we deduce $R/C\cong\mathrm{PSL}_2(7)$ or $R/C\cong\mathrm{PGL}_2(7)$. Assume first $R/C\cong\mathrm{PSL}_2(7)$. Now, $\mathrm{PSL}_2(7)$ has four conjugacy classes of maximal $7'$-subgroups: two of these subgroups have order $24$ and two of these subgroups have order $12$. Therefore, $R$ has four conjugacy classes (with representatives $K_1,K_2,K_3$ and $K_4$, say) of $7'$-subgroups and $|K_1|=|K_2|=2^{a_1}\cdot 3^{a_2}$, $|K_3|=|K_4|=2^{a_1-1}\cdot 3^{a_2}$. Therefore, repeating the same argument we have used for solvable groups (but taking in account that $R$ has four conjugacy classes of maximal $7'$-subgroups), we deduce
\begin{align*}
|\mathrm{Sub}(R)|&\le |\mathrm{Sub}(K_1)|\frac{|R|}{|K_1|}S(7,1)\frac{|R|}{7}+|\mathrm{Sub}(K_2)|\frac{|R|}{|K_2|}S(7,1)\frac{|R|}{7}+
|\mathrm{Sub}(K_3)|\frac{|R|}{|K_3|}S(7,1)\frac{|R|}{7}+|\mathrm{Sub}(K_4)|\frac{|R|}{|K_4|}S(7,1)\frac{|R|}{7}
\\
&=2r(|\mathrm{Sub}(K_1)|+|\mathrm{Sub}(K_2)|)+
4r(|\mathrm{Sub}(K_3)|+|\mathrm{Sub}(K_4)|)\\
&
< 4rc(2)(r/7)^{\frac{\log_2(r/7)}{4}}
+8rc(2)(r/14)^{\frac{\log_2(r/14)}{4}},
\end{align*}
where the last inequality follows from Theorem~\ref{SolvableTheorem} and from the fact that $K_1,K_2,K_3$ and $K_4$ are solvable. Hence, in this case, Theorem~\ref{thrm:main} follows from Lemma~\ref{bounds:technical}~\eqref{bounds:technical1}.  Assume next $R/C\cong\mathrm{PGL}_2(7)$. Observe that $a_1\ge 4$, because a Sylow $2$-subgroup of $\mathrm{PGL}_2(7)$ has order $16.$ Now, $\mathrm{PGL}_2(7)$ has three conjugacy classes of maximal $7'$-subgroups and  these groups have order $12,16$ and $24$. Therefore, $R$ has three conjugacy classes (with representatives $K_1,K_2$ and $K_3$, say) of $7'$-subgroups and $|K_1|=2^{a_1-2}\cdot 3^{a_2}$, $|K_2|=2^{a_1}\cdot 3^{a_2-1}$ and $|K_3|=2^{a_1-1}\cdot 3^{a_2}$. Therefore,  arguing as above, we deduce
\begin{align*}
|\mathrm{Sub}(R)|&\le |\mathrm{Sub}(K_1)|\frac{|R|}{|K_1|}S(7,1)\frac{|R|}{7}+
|\mathrm{Sub}(K_2)|\frac{|R|}{|K_2|}S(7,1)\frac{|R|}{7}+
|\mathrm{Sub}(K_3)|\frac{|R|}{|K_3|}S(7,1)\frac{|R|}{7}\\
&=|\mathrm{Sub}(K_1)|S(7,1)4r+
|\mathrm{Sub}(K_2)|S(7,1)3r+
|\mathrm{Sub}(K_3)|S(7,1)2r\\
&=8rc(2)(r/28)^{\frac{\log_2(r/28)}{4}}+
6rc(2)(r/21)^{\frac{\log_2(r/21)}{4}}+
4rc(2)(r/14)^{\frac{\log_2(r/14)}{4}}.
\end{align*}
 Hence, in this case, Theorem~\ref{thrm:main} follows from Lemma~\ref{bounds:technical}~\eqref{bounds:technical2}.

\smallskip

\noindent\textsc{Case $p_3=5$.} We have
\begin{align*}
f(2^2\cdot 3\cdot 5^3)&>0.12\ge 0,\\
f(2^2\cdot 3^2\cdot 5^2)&>0.04\ge 0,\\
f(2^2\cdot 3^5\cdot 5)&>0.17\ge 0,\\
f(2^3\cdot 3^4\cdot 5)&>0.15\ge 0,\\
f(2^4\cdot 3^3\cdot 5)&>0.04\ge 0,\\
f(2^5\cdot 3\cdot 5^2)&>0.03\ge 0,\\
f(2^9\cdot 3^2\cdot 5)&>0.04\ge 0.
\end{align*}
From Lemmas~\ref{direction1} and Lemma~\ref{direction3} and~\ref{simple}, we deduce that $f(r)\ge 0$ and hence $r$ is good, as long as $r$ is divisible by an element of $$\{2^2\cdot 3\cdot 5^3,2^2\cdot 3^2\cdot 5^2,2^2\cdot 3^5\cdot 5,2^3\cdot 3^4\cdot 5, 2^4\cdot 3^3\cdot 5, 2^5\cdot 3\cdot 5^2,2^9\cdot 3^2\cdot 5\}.$$ Therefore, Lemma~\ref{simple} shows that~\eqref{finalll} is satisfied, except when $r=2^{a_1}\cdot 3\cdot 5
$ or 
\begin{align*}
r\in \{&300=2^2\cdot 3\cdot 5^2,600=2^3\cdot 3\cdot 5^2, 1\,200=2^4\cdot 3\cdot 5^2,\\
&1\,620=2^2\cdot 3^4\cdot 5, 540=2^2\cdot 3^3\cdot 5, 1\,080=2^3\cdot 3^3\cdot 5,\\
&180=2^2\cdot 3^2\cdot 5,
360=2^3\cdot 3^2\cdot 5,
720=2^4\cdot 3^2\cdot 5,
1\,440=2^5\cdot 3^2\cdot 5,\\
&2\,880=2^6\cdot 3^2\cdot 5,
5\,760=2^7\cdot 3^2\cdot 5,
11\,520=2^8\cdot 3^2\cdot 5\}.
\end{align*}

\smallskip

As $r=|R|>2,000$, we may exclude the cases $r\in \{180, 300, 360, 540, 600, 720, 1\,080, 1\,200, 1\,440, 1\,620\}$. Assume $r\in\{2\,880,5\,760,11\,520\}$. Thus $r=2^{a_1}\cdot 3^2\cdot 5$, with $a_1\in \{6,7,8\}$. From Lemma~\ref{simple}, we see that $R$ has a unique non-abelian composition factors, which is isomorphic to either $\mathrm{Alt}(5)$ or to $\mathrm{Alt}(6)$.  Suppose $\mathrm{Alt}(6)$ is a composition factor $X/Y$ of $R$. Let $C:={\bf C}_R(X/Y)$. By definition, $C$ is the kernel of the action $R\to\mathrm{Aut}(X/Y)$ of $R$ by conjugation on $X/Y$. In particular, as $\mathrm{Aut}(\mathrm{Alt}(6))\cong\mathrm{P}\Gamma\mathrm{L}_2(9)$, we deduce 
\begin{itemize}
\item $R/C\cong\mathrm{Alt}(6)$ and $|C|=2^{a_1-3}$, or
\item $R/C\cong\mathrm{PGL}_2(9)$ and $|C|=2^{a_1-4}$, or
\item $R/C\cong M_{10}$ and $|C|=2^{a_1-4}$, or 
\item $R/C\cong\mathrm{P}\Sigma\mathrm{L}_2(9)$ and $|C|=2^{a_1-4}$, or
\item $R/C\cong\mathrm{P}\Gamma\mathrm{L}_2(9)$ and $|C|=2^{a_1-5}$.
\end{itemize}
Since $\mathrm{Alt}(6)$ has Schur multiplier of order $6$ (see~\cite[page~6]{atlas}) and since $C$ is a $2$-group, we deduce that either  $R$ splits over $C$ and hence $R\cong R/C\times C$, or $R\cong R/C\circ C$. Using this information we recover the various isomorphism classes of $R$ and check the veracity of Theorem~\ref{thrm:main} in each case. For instance, when $R/C\cong\mathrm{Alt}(6)$, $R\cong R/C\times C$ and $a_1=6$, we have that $R$ is isomorphic to one of the following five groups
$$\mathrm{Alt}(6)\times C_2^3,\, \mathrm{Alt}(6)\times C_2\times C_4,\,\mathrm{Alt}(6)\times C_8,\,\mathrm{Alt}(6)\times D_4,\,\mathrm{Alt}(6)\times Q_8.$$

\smallskip

Assume $r=2^{a_1}\cdot 3\cdot 5$, or $r\in \{2\,880,5\,760,11\,520\}$ and $\mathrm{Alt}(5)$ is a composition factor of $R$. Thus $a_2=1$ in the first case and $a_2=2$ in the second case. In particular, by Lemma~\ref{simple}, $R$ has a unique non-abelian chief factor $X/Y$ and $X/Y\cong\mathrm{Alt}(5)$. Let $C:={\bf C}_R(X/Y)$. By definition, $C$ is the kernel of the action $R\to\mathrm{Aut}(X/Y)$ of $R$ by conjugation on $X/Y$. In particular, as $\mathrm{Aut}(\mathrm{Alt}(5))\cong\mathrm{Sym}(5)$, we deduce $R/C\cong\mathrm{Alt}(5)$ or $R/C\cong\mathrm{Sym}(5)$. Here we need to argue slightly differently from the case in the previous paragraph, because otherwise we end up with too many cases to be checked with a computer. Now $\mathrm{Alt}(5)$ and $\mathrm{Sym}(5)$ both have two conjugacy classes of maximal $5'$-subgroups and these subgroups have order $6$ and $12$ in $\mathrm{Alt}(5)$ and have order $12$ and $24$ in $\mathrm{Sym}(5)$. Therefore, $R$ has two conjugacy classes (with representatives $K_1$ and $K_2$, say) of $5'$-subgroups and $|K_1|=2^{a_1-1}\cdot 3^{a_2}$ and  $|K_2|=2^{a_1}\cdot 3^{a_2}$. Let us call $a$ the number of subgroups of $R$ having order relatively prime to $5$ and let us call $b$ the number of subgroups of $R$ having order divisible by $5$. Since $K_1$ has $10$ conjugates in $R$ and since $K_2$ has $5$ conjugates in $R$, we deduce
\begin{align}\label{eq:A}
a&\le 10|\mathrm{Sub}(K_1)|+5|\mathrm{Sub}(K_2)|\le 10c(2)(r/10)^{\frac{\log_2(r/10)}{4}}+5c(2)(r/5)^{\frac{\log_2(r/5)}{4}}.
\end{align}
Now, let $H$ be an arbitrary subgroup of $R$ having order divisible by $5$. Then $H=\langle A,B\rangle$, where $A$ is a $5'$-subgroup of $H$ and $B$ is a Sylow $5$-subgroup of $H$. Now, $A$ is contained in one of the $15$ maximal $5'$-subgroups of $R$. However, since $B$ acts transitively on the five conjugates of $K_2$, we may assume (replacing $A$ with a suitable $B$-conjugate if necessary) that either $A$ is contained in one of the $10$ conjugates of $K_1$ or $A\le K_2$. Taking this in account, we have
\begin{align}\label{eq:B}
b&\le 10|\mathrm{Sub}(K_1)|\frac{r}{5}+|\mathrm{Sub}(K_2)|\frac{r}{5}\le c(2)\cdot 2r\cdot(r/10)^{\frac{\log_2(r/10)}{4}}+c(2)\cdot\frac{r}{5}\cdot(r/5)^{\frac{\log_2(r/5)}{4}}.
\end{align}
Clearly, $|\mathrm{Sub}(R)|=a+b$. Using~\eqref{eq:A} and~\eqref{eq:B}, we obtain $a+b\le c(2)r^{\log_2r/4}$, except when $a_1=2$. Therefore, in this case, Theorem~\ref{thrm:main} immediately follows from Lemma~\ref{bounds:technical}~\eqref{bounds:technical3}.
\end{proof}

\section{The final cases}\label{sec:finalcases}
Before dealing with the remaining cases, we need three general results.

\begin{lemma}\label{lemma:schur}
Let $R$ be a counterexample of minimal order to Theorem~$\ref{thrm:main}$. Then $R$ has no normal non-identity Sylow $p$-subgroup.
\end{lemma}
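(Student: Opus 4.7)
The plan is to argue by contradiction. Assume that a minimal counterexample $R$ has a non-identity normal Sylow $p$-subgroup $P = P_i$ of order $p^{a_i}$, and derive $|\mathrm{Sub}(R)| < c(2) r^{\log_2(r)/4}$. First I note that, since $R$ is non-solvable (solvable groups satisfy the theorem by Theorem~\ref{SolvableTheorem}) and $P$ is a solvable $p$-group, the quotient $R/P$ must be non-solvable; the Odd Order Theorem then forces $p$ to be odd. By Schur--Zassenhaus---whose conjugacy clause applies because the normal Hall subgroup $P$ is solvable---we may write $R = P \rtimes K$ for some Hall $p'$-subgroup $K$, and every $p'$-subgroup of $R$ is conjugate to a subgroup of $K$.

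Next I will repeat the counting of~\eqref{eq:solvable0} with $p$ in place of $p_\ell$. The crucial improvement over the solvable setting is that, because $P$ is normal, $H \cap P$ is the unique Sylow $p$-subgroup of each subgroup $H \le R$, so it contributes only $|\mathrm{Sub}(P)|$ choices rather than the $1 + (|\mathrm{Sub}(P)|-1)\,r/p^{a_i}$ of the solvable argument. The same bound for the Hall $p'$-part carries over verbatim (it uses only conjugacy in $R$, which Schur--Zassenhaus provides). This gives
\[
|\mathrm{Sub}(R)| \le |\mathrm{Sub}(P)| \cdot \bigl(1 + (|\mathrm{Sub}(K)|-1)\,p^{a_i}\bigr) < |\mathrm{Sub}(P)| \,|\mathrm{Sub}(K)|\, p^{a_i}.
\]
Invoking Lemma~\ref{lemma:new} for $|\mathrm{Sub}(P)| \le S(p_i,a_i)$ and the minimality hypothesis for $|\mathrm{Sub}(K)| \le c(2)(r/p^{a_i})^{\log_2(r/p^{a_i})/4}$ (valid since $|K| < r$), a manipulation analogous to the one producing~\eqref{eq:solvable} reduces the required inequality to
\[
\frac{S(p_i,a_i)\,p^{a_i}}{(r^{2}/p^{a_i})^{\log_2(p^{a_i})/4}} \le 1.
\]
Since $p^{a_i} \le r$, this is strictly weaker than the inequality $S(p_i,a_i)\,r \le (r^{2}/p^{a_i})^{\log_2(p^{a_i})/4}$ furnished by Proposition~\ref{proposition:appendix}.

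The main obstacle is then to dispose of the exceptional cases listed in Proposition~\ref{proposition:appendix}. Because $p$ is odd, all exceptions with $p_i = 2$ are vacuous in our setting. Every remaining exception either restricts $r$ to an explicit value at most~$2000$ (eliminated because a minimal counterexample satisfies $r > 2000$ by Section~\ref{magmacomputations}) or corresponds to $a_i = 1$ with $p_i \in \{3,5\}$, where the numerator $S(p_i,a_i)\,p^{a_i} \in \{6,10\}$ is so small that the required inequality holds trivially already for $r \ge 17$. Hence the reduction above delivers $|\mathrm{Sub}(R)| < c(2) r^{\log_2(r)/4}$ in every case, contradicting the choice of $R$.
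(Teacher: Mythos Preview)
Your proof is correct, and in fact it is cleaner than the paper's own argument. The key difference is your observation that, because $P$ is \emph{normal} in $R$, the Sylow $p$-subgroup of any $H\le R$ is precisely $H\cap P\le P$, so the $p$-part contributes only $|\mathrm{Sub}(P)|$ choices. The paper does not exploit this at the outset: it uses the same crude estimate as in the solvable case, counting at most $1+(|\mathrm{Sub}(P)|-1)\,r/p^{a}$ possibilities for the $p$-part (as if $P$ were merely one Sylow subgroup among many). This costs an extra factor of $r/p^{a}$, leading to the criterion $S(p,a)\,r\le (r^{2}/p^{a})^{\log_2(p^{a})/4}$ rather than your $S(p,a)\,p^{a}\le (r^{2}/p^{a})^{\log_2(p^{a})/4}$. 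The paper then has to invoke Proposition~\ref{proposition:appendix} directly, is left with the genuine exceptions $p\in\{3,5\}$, $a=1$, and must appeal to the separate estimate Lemma~\ref{bounds:technical}\,\eqref{bounds:technical4} to finish; only at that final step does it effectively use normality of $P$ (via ``$1$ and $P$ are the only $p$-subgroups''), but just in the special case $a=1$. Your approach folds this in from the start, making the reduction to Proposition~\ref{proposition:appendix} automatic and the exceptional cases trivial, and it dispenses with both the reference to Section~\ref{sec:l=3} and the auxiliary Lemma~\ref{bounds:technical}\,\eqref{bounds:technical4}.
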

\begin{proof}
We argue by contradiction and we let $R$ be a counterexample of minimal order to Theorem~\ref{thrm:main} admitting a normal non-identity Sylow $p$-subgroup $P$.  From the Schur-Zassenhaus theorem, let $K$ be a complement of $P$ in $R$.  Thus $p\ge 3$.

 If $p=2$, then from the Odd Order Theorem $P$ and $R/P$ are solvable, and hence so is $R$. However, this contradicts Theorem~\ref{SolvableTheorem}.

From Section~\ref{magmacomputations}, we may suppose that $|R|>2\,000$ and, from Section~\ref{sec:l=3}, we may suppose that $\ell\ge 4$.

Let $H$ be a subgroup of $R$. Then, from the Schur-Zassenhaus theorem, $H=H_{p'}H_{p}$, where $H_{p'}$ is a Hall $p'$-subgroup of $H$ and $H_{p}$ is a Sylow $p$-subgroup of $H$. Now, from the Schur-Zassenhaus theorem, $H_{p'}$ is conjugate, via an element of $P$, to a subgroup of $K$ and, from Sylow's theorem, $H_{p}$ is conjugate, via an element of $K$, to a subgroup of $P$. In particular, we have at most $1+(|\mathrm{Sub}(K)|-1)|P|$ choices for $H_{p'}$, because every non-identity subgroup of $K$ as at most $|P|$ conjugates. Similarly, we have at most $1+(|\mathrm{Sub}(P)|-1)|K|$ choices for $H_{p}$, because every non-identity subgroup of $P$ as at most $|K|=|R|/|P|$ conjugates. Write $|P|=p^a$ and $r=|R|$. Therefore,
\begin{equation}\label{eq:silly}
|\mathrm{Sub}(R)|\le (1+(|\mathrm{Sub}(K)|-1)p^a)\cdot \left(
1+(|\mathrm{Sub}(P)|-1)\frac{r}{p^{a}}\right).
\end{equation}
Using Lemma~\ref{lemma:new}, this can be simplified in
$$|\mathrm{Sub}(R)|\le |\mathrm{Sub}(K)|p^{a}\cdot |\mathrm{Sub}(P)|\frac{r}{p^{a}}\le |\mathrm{Sub}(K)|S(p,a)r.$$
As $K<R$, $K$ is not a counterexample to Theorem~\ref{thrm:main} and hence
\begin{align*}
|\mathrm{Sub}(R)|&< c(2)(r/p^{a})^{\frac{\log_2(r/p^{a})}{4}}S(p,a)r=
c(2)r^{\frac{\log_2r}{4}}\cdot\frac{S(p,a)r}{p^{\frac{a\log_2(r/p^{a})}{4}}r^{\frac{\log_2(p^{a})}{4}}}\\
&=c(2)r^{\frac{\log_2r}{4}}\cdot\frac{S(p,a)r}{(r^2/p^a)^{\frac{\log_2(p^{a})}{4}}}.
\end{align*}
As $R$ is a counterexample to Theorem~\ref{thrm:main}, we have
$$
\frac{S(p,a)r}{(r^2/p^a)^{\frac{\log_2(p^{a})}{4}}}>1.$$
From Proposition~\ref{proposition:appendix}, we obtain that one of~\eqref{appendix0}--\eqref{appendix5} is satisfied. As $r>2\,000$, we deduce $p\in \{3,5\}$ and $a=1$.

When $a=1$,  $1$ and $P$ are the only $p$-subgroups of $R$ and hence we may refine~\eqref{eq:silly} with
\begin{equation*}
|\mathrm{Sub}(R)|\le (1+(|\mathrm{Sub}(K)|-1)p^a)\cdot 2\le 2p|\mathrm{Sub}(K)|< 2pc(2)(r/p)^{\frac{\log_2(r/p)}{4}}.
\end{equation*}
Now, the proof follows from Lemma~\ref{bounds:technical}~\eqref{bounds:technical4}. 
\end{proof}

\begin{lemma}\label{lemma:schur2}
Let $C$ be a solvable group and let $p$ be a prime divisor of $|C|$ with the property that, for each prime power divisor $q$ of $|C|$ with $q>1$ and $p\nmid q$, $p$ is relatively prime to $q-1$. Then $R$ has a normal  Sylow $p$-subgroup.
\end{lemma}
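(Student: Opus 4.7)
(I assume the conclusion is really that $C$, not $R$, has a normal Sylow $p$-subgroup; this seems to be the intended statement, since $R$ does not appear in the hypothesis.) The plan is to argue by induction on $|C|$. Let $P$ be a Sylow $p$-subgroup of $C$. When $|C|$ is a prime power, $C$ is its own Sylow $p$-subgroup and there is nothing to prove, so assume $|C|$ is not a prime power. Since $C$ is solvable, let $N$ be a minimal normal subgroup of $C$; then $N$ is elementary abelian of order $\ell^a$ for some prime $\ell$ dividing $|C|$.

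The first key observation is that the hypothesis of the lemma passes to $C/N$: every prime power divisor of $|C/N|$ is already a prime power divisor of $|C|$, so the coprimality assumption with $p-1$-type factors persists. I would then split into two cases. If $\ell=p$, then $N\le P$ and by induction the Sylow $p$-subgroup $P/N$ of $C/N$ is normal in $C/N$, hence $P$ is normal in $C$.

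The substantive case is $\ell\neq p$. Here I would study the conjugation action of $P$ on $N$, encoded by a homomorphism $\varphi\colon P\to \mathrm{Aut}(N)=\mathrm{GL}_a(\ell)$. Since $P$ is a $p$-group, $|\varphi(P)|$ is a power of $p$, so it divides the $p$-part of $|\mathrm{GL}_a(\ell)|=\ell^{a(a-1)/2}\prod_{i=1}^a(\ell^i-1)$. Because $\ell\neq p$, this $p$-part divides $\prod_{i=1}^a(\ell^i-1)$. Now for every $i\in\{1,\ldots,a\}$ the prime power $\ell^i$ divides $|C|$ (since $\ell^a$ does), and by hypothesis $p$ is coprime to $\ell^i-1$. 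Hence $p\nmid |\mathrm{GL}_a(\ell)|$, so $\varphi$ is trivial; that is, $P$ centralizes $N$.

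To finish, apply the induction hypothesis to $C/N$, which gives $PN/N\trianglelefteq C/N$, i.e.\ $PN\trianglelefteq C$. Since $P\cap N=1$ (as $\ell\neq p$) and $[P,N]=1$, we have $PN=P\times N$, so $P$ is the unique, and hence characteristic, Sylow $p$-subgroup of $PN$. Combined with $PN\trianglelefteq C$, this yields $P\trianglelefteq C$, completing the induction. The only delicate step is the coprimality computation that forces $\varphi$ to be trivial; everything else is bookkeeping about chief factors and the Schur-type decomposition $PN=P\times N$.
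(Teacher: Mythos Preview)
Your proof is correct (and yes, the conclusion should read $C$, not $R$). The overall scaffolding matches the paper's---induction via a minimal normal subgroup $N$ and passage to $C/N$ to obtain $PN\trianglelefteq C$---but the decisive step is handled differently. The paper, arguing with a minimal counterexample, applies minimality once more to force $C=NP$; then $N$ is an irreducible $P$-module, and since a $p$-group acting on the $|N|-1$ non-identity elements of $N$ must fix one unless $p\mid |N|-1$, irreducibility gives either a trivial action (so $P\trianglelefteq C$) or $p\mid q-1$ for the single prime power $q=|N|$, contradicting the hypothesis. You instead compute $|\mathrm{GL}_a(\ell)|$ and invoke the hypothesis for every $\ell^i$ with $1\le i\le a$ to kill the whole automorphism group, so $P$ centralizes $N$ and is characteristic in $PN$; this sidesteps both the reduction to $C=NP$ and the irreducibility/fixed-point argument. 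Your route spends more of the hypothesis but is more self-contained; the paper's is terser and, in effect, shows that only the value $q=|N|$ is actually needed at that step.
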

\begin{proof}
We argue by contradiction and we let $C$ be a counterexample of minimal order. Let $N$ be a minimal normal subgroup of $C$. As $C$ is solvable, $N$ has order a prime power $q>1$. We adopt the ``bar'' notation for $\bar C=C/N$. If $p$ is relatively prime to $|\bar C|$, then $N$ is a normal Sylow $p$-subgroup of $C$, contradicting the fact that $C$ is a counterexample to the statement of this lemma. Thus $p\mid |\bar C|.$ As $|\bar C|<|C|$, $\bar C$ has a normal Sylow $p$-subgroup $\bar P$. Thus $\bar P=NP/N$, where $P$ is a Sylow $p$-subgroup of $C$. As $\bar P\unlhd \bar C$, we have $NP\unlhd C$ and hence the minimality of $C$ gives $C=NP$. Now the action of $P$ by conjugation on $N$ endows $N$ of the structure of  module for $P$. As $N$ is a minimal normal subgroup of $C$, $P$ acts irreducibly on $N$ and hence $p$ divides $|N|-1=q-1$, which is a contradiction.
\end{proof}

\subsection{An algorithm: step~1}\label{sec:algorithm}
In view of Proposition~\ref{lmax}, there are only a finite number of counterexamples to Theorem~\ref{thrm:main} and our task is to show that actually there are no counterexamples. We now describe an algorithm that greatly reduces the number of exceptions we need to analyze in detail. 

The first step in our algorithm is to refine even further the functions~\eqref{function:f} and~\eqref{function:ffttt}.
The input in the first step of our algorithm is a positive integer $r$ satisfying none of the conditions in Proposition~\ref{lmax}. The output of our algorithm is ``yes'' if a finite group of order $r$ satisfies Theorem~\ref{thrm:main} and is ``unknown'' if our procedure cannot exclude the existence of a counterexample to Theorem~\ref{thrm:main}.

First, as usual, we write $r=p_1^{a_1}\cdots p_\ell^{a_\ell}$, where $p_1<\cdots <p_\ell$. Next, for each $i\in \{1,\ldots,\ell\}$, 
\begin{itemize}
\item when $p_i=2$, we let $n_i=r/p_i^{a_i}$,
\item when $p_i=3$ and $r$ is divisible by the order of $\mathrm{PSL}_2(3^{3^f})$ for some $f\ge 0$, we let $n_i$ be the largest divisor of $r/p_i^{a_i}$ with $n_i\equiv 1\mod p_i$ and with $n_i\le r/p_i^{a_i}$, otherwise
\item we let $n_i$ be the largest divisor of $r/p_i^{a_i}$ with $n_i\equiv 1\mod p_i$ and with $n_i< r/p_i^{a_i}$.
\end{itemize} 
Observe that, by~\cite{GMN}, $n_i$ is an upper bound on the number of Sylow $p_i$-subgroups of a non-solvable finite group of order $r$.  From Section~\ref{sec:ao}, for every non-solvable group $R$ of order $r$, we have
\begin{equation}\label{betterbound:1}|\mathrm{Sub}(R)|\le \prod_{i=1}^\ell n_iS(p_i,a_i).\end{equation}
We have computed $n_1,\ldots,n_\ell$ and we have computed the right hand side of this inequality. When $n_i=1$ for some $i$ or when this number is less than 
 $c(2)r^{\log_2r/4}$, we return ``yes'' otherwise we return ``unknown''. The list of positive integers $r$ where this procedure returns ``unknown'' is reported in 
 Table~\ref{tableSimpleSimple}. From Lemma~\ref{lemma:schur} and from~\eqref{betterbound:1}, when the procedure returns ``yes'', finite groups of order $r$ satisfy Theorem~\ref{thrm:main}. Therefore, the cardinalities that require further considerations are in Table~\ref{tableSimpleSimple}.
 
 Using the order of the non-abelian simple groups (via the Classification of Finite Simple Groups), we are able to verify two important facts in our second procedure.

\begin{table}[!ht]
\begin{tabular}{l|l}
$\ell$&$r$\\\hline
$10$&$12939386460$\\
&$13831757940$\\
&$26324958660$\\\hline
$9$&$446185740$\\
&$892371480$\\
&$1784742960$\\
&$562582020$\\
&$1125164040$\\
&$601380780$\\
&$1804142340$\\
&$1202761560$\\
&$717777060$\\
&$1435554120$\\
&$795374580$\\
&$1028167140$\\
&$2289126840$\\
&$681020340$\\
&$1362040680$\\
&$868888020$\\
&$962821860$\\
&$1103722620$\\
&$917896980$\\
&$1835793960$\\
&$761140380$\\
&$1522280760$\\
&$813632820$\\
&$971110140$\\
&$1076095020$\\
&$1025884860$
\end{tabular}
\begin{tabular}{l|l}
$\ell$&$r$\\
$8$&
1
19399380
2
135795660
3
96996900
4
58198140
5
38798760
6
116396280
7
77597520
8
23483460
9
70450380
10
46966920
11
93933840
12
29609580
13
59219160
14
118438320
15
31651620
16
94954860
17
63303240
18
37777740
19
75555480
20
83723640
21
43903860
22
87807720
23
47987940
24
54114060
25
108228120
26
62282220
27
68408340
28
74534460
29
26246220
30
78738660
31
52492440
32
104984880
33
33093060
34
35375340
35
70750680
36
46786740
37
93573480
38
49069020
39
98138040
40
60480420
41
67327260
42
69609540
43
40060020
44
160240080
45
42822780
46
51111060
47
59399340
48
64924860
49
129849720
50
114654540
51
53993940
52
64444380
53
71411340
54
205525320
55
106246140

\end{tabular}
\begin{tabular}{l|l}
$\ell$&$r$\\\hline
$4$
&
420, 2940, 2100, 1260, 11340, 840, 4200, 2520, 7560, 1680, 11760, 5040, 3360, 6720, 20160, 13440, 26880, 
53760, 107520, 215040, 430080, 860160, 1720320, 6881280, 13762560, 55050240, 440401920, 660, 3300, 1980, 
1320, 3960, 2640, 10560, 21120, 84480, 1560, 7800, 3120, 4080, 8160, 1140, 4560, 2760, 1740, 924, 1848, 3696,
14784, 1092, 2184, 3444, 1716
\\\hline
\end{tabular}
\caption{Positive integers returning ``unknown'' in the step~1 of the algorithm}\label{tableSimpleSimple}
\end{table}

l=5

1 4620
2 32340
3 23100
4 13860
5 41580
6 9240
7 27720
8 83160
9 18480
10 55440
11 36960
12 73920
13 147840
14 295680
15 591360
16 5460
17 38220
18 27300
19 16380
20 49140
21 10920
22 54600
23 32760
24 21840
25 43680
26 131040
27 87360
28 7140
29 21420
30 64260
31 14280
32 42840
33 28560
34 142800
35 57120
36 114240
37 7980
38 23940
39 15960
40 31920
41 95760
42 255360
43 9660
44 48300
45 19320
46 77280
47 154560
48 12180
49 24360
50 26040
51 46620
52 17220
53 72240
54 8580
55 17160
56 68640
57 22440
58 44880
59 45540
60 19140
61 31020
62 13260
63 29640
64 28860
65 12012
66 36036
67 24024
68 48048
69 192192
70 62832
71 17556
72 42504
73 44268

\smallskip

\noindent\textsc{Fact 1: } Let $T$ be a non-abelian  simple group  whose order divides $r$. Then $T$ appears in Table~\ref{tableSimple}. Observe that the order of the outer automorphism group of $T$ is not divisible by primes larger than $3$. 

\begin{table}[!ht]
\begin{tabular}{c|c|c}
Group&Order &Order of outer Automorphism\\\hline
$\mathrm{Alt}(5)$&$60=2^2\cdot 3\cdot 5$&2\\
$\mathrm{PSL}_2(7)$&$168=2^3\cdot 3\cdot 7$&2\\
$\mathrm{Alt}(6)$&$360=2^3\cdot 3^2\cdot 5$&4\\
$\mathrm{PSL}_2(8)$&$504=2^3\cdot 3^2\cdot 7$&3\\
$\mathrm{PSL}_2(11)$&$660=2^2\cdot 3\cdot 5\cdot 11$&2\\
$\mathrm{PSL}_2(13)$&$1\,092=2^2\cdot 3\cdot 7\cdot 13$&2\\
$\mathrm{PSL}_2(17)$&$2\, 448=2^4\cdot 3^2\cdot 17$&2\\
$\mathrm{Alt}(7)$&$2\,520=2^3\cdot 3^2\cdot 5\cdot 7$&2\\
$\mathrm{PSL}_2(19)$&$3\,420=2^2\cdot 3^2\cdot 5\cdot 19$&2\\

$\mathrm{PSL}_2(16)$&$4\,080=2^4\cdot 3\cdot 5\cdot 17$&4\\
$\mathrm{PSL}_3(3)$&$5\,616=2^4\cdot 3^3\cdot 13$&2\\
$\mathrm{PSL}_2(23)$&$6\,072=2^3\cdot 3\cdot 11\cdot 23$&2\\
$\mathrm{PSL}_2(25)$&$7\,800=2^3\cdot 3\cdot 5^2\cdot 13$&4\\
$M_{11}$&$7\,920=2^4\cdot 3^2\cdot 5\cdot 11$&1\\
$\mathrm{PSL}_{2}(27)$&$9\,828=2^2\cdot 3^3\cdot 7\cdot 13$&6\\

$\mathrm{PSL}_{2}(29)$&$12\,180=2^2\cdot 3\cdot 5\cdot 7\cdot 29$&2\\
%$\mathrm{PSL}_{2}(31)$&$14\,880=2^5\cdot 3\cdot 5\cdot 31$&2\\

$\mathrm{Alt}(8)$&$20\,160=2^6\cdot 3^2\cdot 5\cdot 7$&2\\
$\mathrm{PSL}_{3}(4)$&$20\,160=2^6\cdot 3^2\cdot 5\cdot 7$&12\\
%$\mathrm{PSL}_{2}(37)$&$25\,308=2^2\cdot 3^2\cdot 19\cdot 37$&2\\
$\mathrm{Suz}(8)$&$29\,120=2^6\cdot 5\cdot 7\cdot 13$&3\\

%$\mathrm{PSL}_{2}(32)$&$32\,736=2^2\cdot 3^3\cdot 7\cdot 13$&5\\
$\mathrm{PSL}_{2}(41)$&$34\,440=2^3\cdot 3\cdot 5\cdot 7\cdot 41$&2\\

$\mathrm{PSL}_2(43)$&$39\,732=2^2\cdot 3\cdot 7\cdot 11\cdot 43$&2\\
$\mathrm{PSL}_2(67)$&$150\,348=2^2\cdot 3\cdot 11\cdot 17\cdot 67$&2\\
$J_1$&$175\, 560=2^3\cdot 3\cdot 5\cdot 7\cdot 11\cdot 19$&1\\
%$\mathrm{PSL}_2(64)$&$262\, 080=2^6\cdot 3^2\cdot 5\cdot 7\cdot 13$&6\\

\end{tabular}
\caption{Putative non-abelian simple sections in a counterexample to Theorem~\ref{thrm:main}.}\label{tableSimple}
\end{table}

\noindent\textsc{Fact 2: } Let $\kappa\ge 2$ and let $T_1,\ldots,T_\kappa$ be non-abelian simple groups with $r$ divisible by $|T_1|\cdot |T_2|\cdots |T_\kappa|$. Then $\kappa=2$ and $T_1\times T_2$ is isomorphic to either $$
\mathrm{Alt}(5)\times\mathrm{PSL}_2(7), \hbox{ or } 
\mathrm{Alt}(5)\times\mathrm{PSL}_2(13), 
%\hbox{ or } \mathrm{PSL}_2(7)\times\mathrm{PSL}_2(11), 
\hbox{ or } \mathrm{PSL}_2(11)\times\mathrm{PSL}_2(13) 
.$$
 In particular, a non-solvable group $R$ of order $r$ (where the previous procedure has returned ``unknown'') has at most two non-abelian simple sections, and if $R$ has two non-abelian simple sections, then these two sections are not isomorphic. 

\subsection{An algorithm: step~2}

We are now ready to describe our second procedure that needs to be applied to all cases where the previous procedure returns ``unknown''.

The input of the second procedure is a positive integer $r$. First, we determine the set $\mathcal{D}$ all the divisors $d$ of $r$, with the property that $d$ is the order of a direct product of non-abelian simple groups. (The number $d$ represents the cardinality of the product of the cardinalities of the non-abelian simple sections in a non-solvable group of order $r$.) Clearly, in this step, we may use the information in Table~\ref{tableSimple}.

In the case that there is no such divisor $d$, that is $\mathcal{D}=\emptyset$, we stop our algorithm and we return ``yes'': this represents the fact that a finite group of order $r$ is solvable because $r$ is not divisible by the order of a non-abelian simple group. 

Let $\kappa$ be the number of non-abelian factors in a composition series of a non-solvable group $R$ of order $r$. From Fact~2, $\kappa\le 2$ and, in the case $\kappa=2$, the two non-abelian factors are non-isomorphic.  Let $X_1/Y_1,\ldots,X_{\kappa}/Y_\kappa$  be the non-abelian simple sections of $R$ and let $d=|X_1/Y_1|\cdots |X_\kappa/Y_\kappa|$. When $\kappa=1$, by taking a suitable chief series, we may suppose that $X_1$ and $Y_1$ are normal subgroups of $R$. Similarly, when $\kappa=2$, as $X_1/Y_1\not\cong X_2/Y_2$, by taking a suitable chief series, we may suppose that $X_1,X_2,Y_1,Y_2$ are normal subgroups of $R$. Let $C={\bf C}_R(X_1/Y_1)$ when $\kappa=1$ and let $C={\bf C}_R(X_1/Y_1)\cap{\bf C}_R(X_2/Y_2)$ when $\kappa=2$. Now, $C$ is the kernel of the action of $R$ by conjugation on $X_1/Y_1$ (when $\kappa=1$) and on $X_1/Y_1\times X_2/Y_2$ (when $\kappa=2$). 
When $\kappa=1$, $R/C$ is almost simple with socle $X_1/Y_1$ and $C$ is solvable. When $\kappa=2$, $R/C$ is isomorphic to a subgroup of $\mathrm{Aut}(X_1/Y_1)\times\mathrm{Aut}(X_2/Y_2)$ and $C$ is solvable.

 The order of $C$ divides $r/d$. Now, we select all prime divisor $p\ge 5$ of $r/d$ with 
\begin{itemize}
\item $p\nmid d$, and
\item for each divisor $q$ of $r/d$ with $q>1$ and with $q$ a prime power, $p$ is relatively prime to $q-1$.
\end{itemize}
If there is at least one such prime, we stop the computations and we return ``yes''; indeed, by Lemma~\ref{lemma:schur} and~\ref{lemma:schur2}, $r$ is not the order of a minimal counterexample to Theorem~\ref{thrm:main}.

The only positive integers $r$ where this produce did not return ``yes'' are recorded in Table~\ref{tableexceptions1},~\ref{tableexceptions2} and~\ref{tableexceptions3}. A direct inspection on these tables shows that $\ell\in \{4,5,6\}$. Moreover, except when $|R|=20\,160$ and $\mathrm{Alt}(5)\times\mathrm{PSL}_2(7)$ is a section of $R$, $R$ has a unique non-abelian composition factor. Suppose that $|R|=20\,160$ and $\mathrm{Alt}(5)\times \mathrm{PSL}_2(7)$ is a section of $R$. As $|\mathrm{Alt}(5)\times\mathrm{PSL}_2(7)|=10\,080$, we deduce that $R$ has a normal subgroup $N$ such that either
\begin{itemize}
\item $|R:N|=2$ and $N\cong\mathrm{Alt}(5)\times \mathrm{PSL}_2(7)$, or
\item $|N|=2$ and $R/N\cong\mathrm{Alt}(5)\times\mathrm{PSL}_2(7)$.
\end{itemize}
Taking in account that the Schur multiplier and the outer automorphism group of $\mathrm{Alt}(5)$ and $\mathrm{PSL}_2(7)$ have order $2$, we deduce that $R$ is isomorphic to 
$$
\mathrm{Sym}(5)\times\mathrm{PSL}_2(7),\,
\mathrm{Alt}(5)\times\mathrm{PGL}_2(7),\,
(\mathrm{Alt}(5)\times \mathrm{PSL}_2(7)).2,
$$
or to
$$
\mathrm{SL}_2(5)\times\mathrm{PSL}_2(7),\,
\mathrm{Alt}(5)\times\mathrm{SL}_2(7),\,
\mathrm{SL}_2(5)\circ \mathrm{SL}_2(7)
.$$
The veracity of Theorem~\ref{thrm:main} for these six groups can be verified with a computer. Therefore, for the rest of our argument, we may suppose that $R$ has a unique non-abelian simple factor $T$. Therefore, $R$ has a normal solvable subgroup $C$ with $R/C$ almost simple with socle $T$.
\begin{table}[!ht]
\begin{tabular}{c|c|c}\hline
$r=|R|$&non-abelian sections&$|R|/|T|$\\\hline
$175\,560=2^3\cdot 3\cdot 5\cdot 7\cdot 11\cdot 19$&   $J_1$&1\\
$351\,120=2^4\cdot 3\cdot 5\cdot 7\cdot 11\cdot 19$& $J_1$&2\\
%$526\,680=2^3\cdot 3^2\cdot 5\cdot 7\cdot 11\cdot 19$&6&1&$J_1$&3\\
$702\,240=2^5\cdot 3\cdot 5\cdot 7\cdot 11\cdot 19$& $J_1$&4\\
%$877\,800=2^3\cdot 3\cdot 5^2\cdot 7\cdot 11\cdot 19$& 6&1&$J_1$&5\\
%$1\,053\,360=2^4\cdot 3^2\cdot 5\cdot 7\cdot 11\cdot 19$&6&1&$J_1$&6\\
%$1\,404\,480=2^6\cdot 3\cdot 5\cdot 7\cdot 11\cdot 19$& 6&1&$J_1$&8\\
%$2\,808\,960=2^8\cdot 3\cdot 5\cdot 7\cdot 11\cdot 19$& 6&1&$J_1$&16\\
%$5\,617\,920=2^7\cdot 3\cdot 5\cdot 7\cdot 11\cdot 19$& 6&1&$J_1$&32\\
%$3\,178\,560=2^6\cdot 3\cdot 5\cdot 7\cdot 11\cdot 43$& 6&1&$\mathrm{PSL}_2(43)$&50\\\hline
\end{tabular}
\caption{Exceptions with $\ell=6$}
\label{tableexceptions1}
\end{table}

\begin{table}[!ht]
\begin{tabular}{c|c|c}\hline
$r=|R|$&non-abelian sections&$|R|/|T|$\\\hline
$36\,960=2^5\cdot 3\cdot 5\cdot 7\cdot 11$&$\mathrm{PSL}_2(11)$&$56$\\
$73\,920=2^6\cdot 3\cdot 5\cdot 7\cdot 11$&$\mathrm{PSL}_2(11)$&$112$\\
%$110\,880=2^5\cdot 3^2\cdot 5\cdot 7\cdot 11$&5&1&$\mathrm{PSL}_2(11)$&$168$\\

%$110\,880=2^5\cdot 3^2\cdot 5\cdot 7\cdot 11$&5&2&$\mathrm{Alt}(5)\times \mathrm{PSL}_2(7)$&$11$\\
%$110\,880=2^5\cdot 3^2\cdot 5\cdot 7\cdot 11$&5&2&$\mathrm{PSL}_2(7)\times \mathrm{PSL}_2(11)$&$1$\\

$147\,840=2^7\cdot 3\cdot 5\cdot 7\cdot 11$&$\mathrm{PSL}_2(11)$&$224$\\
%$184\,800=2^5\cdot 3\cdot 5^2\cdot 7\cdot 11$&5&1&$\mathrm{PSL}_2(11)$&$280$\\
%$221\,760=2^6\cdot 3^2\cdot 5\cdot 7\cdot 11$&5&1&$\mathrm{PSL}_2(11)$&$336$\\
%$221\,760=2^6\cdot 3^2\cdot 5\cdot 7\cdot 11$&5&2&$\mathrm{Alt}(5)\times\mathrm{PSL}_2(11)$&$22$\\
%$221\,760=2^6\cdot 3^2\cdot 5\cdot 7\cdot 11$&5&2&$\mathrm{PSL}_2(7)\times\mathrm{PSL}_2(11)$&$2$\\

$295\,680=2^8\cdot 3\cdot 5\cdot 7\cdot 11$&$\mathrm{PSL}_2(11)$&$448$\\
$591\,360=2^9\cdot 3\cdot 5\cdot 7\cdot 11$&$\mathrm{PSL}_2(11)$&$896$\\
%$1\,182\,720=2^{10}\cdot 3\cdot 5\cdot 7\cdot 11$&5&1&$\mathrm{PSL}_2(11)$&$1\,792$\\
%$2\,365\,440=2^{11}\cdot 3\cdot 5\cdot 7\cdot 11$&5&1&$\mathrm{PSL}_2(11)$&$3\,584$\\
%$4\,730\,880=2^{12}\cdot 3\cdot 5\cdot 7\cdot 11$&5&1&$\mathrm{PSL}_2(11)$&$7\,168$\\
%$4\,730\,880=2^{12}\cdot 3\cdot 5\cdot 7\cdot 11$&5&1& $\mathrm{Alt}(5)$&$78\,848$\\

%$9\,461\,760=2^{13}\cdot 3\cdot 5\cdot 7\cdot 11$&5&1&$\mathrm{PSL}_2(11)$&$14\,336$\\
%$9\,461\,760=2^{13}\cdot 3\cdot 5\cdot 7\cdot 11$&5&1&$\mathrm{PSL}_2(7)$&$56\,320$\\
%$9\,461\,760=2^{13}\cdot 3\cdot 5\cdot 7\cdot 11$&5&1&$\mathrm{Alt}(5)$&$157\,696$\\

$87\,360=2^6\cdot 3\cdot 5\cdot 7\cdot 13$&$\mathrm{PSL}_2(13)$&80\\
%$174\,720=2^7\cdot 3\cdot 5\cdot 7\cdot 13$&5&1&$\mathrm{PSL}_2(13)$&160\\
%$698\,880=2^9\cdot 3\cdot 5\cdot 7\cdot 13$&5&1&$\mathrm{PSL}_2(13)$&640\\
%$698\,880=2^9\cdot 3\cdot 5\cdot 7\cdot 13$&5&1&$\mathrm{Suz}(8)$&24\\

%$228\,480=2^7\cdot 3\cdot 5\cdot 7\cdot 17$&5&1&$\mathrm{PSL}_2(16)$&56\\
%$456\,960=2^8\cdot 3\cdot 5\cdot 7\cdot 17$&5&1&$\mathrm{PSL}_2(16)$&112\\

%$383\,040=2^6\cdot 3^2\cdot 5\cdot 7\cdot 19$&5&1&$\mathrm{PSL}_2(19)$&112\\
%$383\,040=2^6\cdot 3^2\cdot 5\cdot 7\cdot 19$&5&2&$\mathrm{Alt}(5)\times\mathrm{PSL}_2(7)$&112\\

$12\,180=2^2\cdot 3\cdot 5\cdot 7\cdot 29$&$\mathrm{PSL}_2(29)$&1\\
$24\,360=2^3\cdot 3\cdot 5\cdot 7\cdot 29$&$\mathrm{PSL}_2(29)$&2\\
%$48\,720=2^4\cdot 3\cdot 5\cdot 7\cdot 11$&5&1&$\mathrm{PSL}_2(29)$&4\\
%$194\,880=2^6\cdot 3\cdot 5\cdot 7\cdot 11$&5&1&$\mathrm{PSL}_2(29)$&16\\
\end{tabular}
\caption{Exceptions with $\ell=5$}
\label{tableexceptions2}
\end{table}

\begin{table}[!ht]
\begin{tabular}{c|c|c|c}\hline
$r=|R|$&non-abelian sections&$|R|/|T|$\\\hline
$2\,520=2^3\cdot 3^2\cdot 5\cdot 7$&$\mathrm{Alt}(7)$&1\\
$5\,040=2^4\cdot 3^2\cdot 5\cdot 7$&$\mathrm{Alt}(7)$&2\\
$7\,560=2^3\cdot 3^3\cdot 5\cdot 7$&$\mathrm{Alt}(7)$ &3\\
$3\,360=2^5\cdot 3\cdot 5\cdot 7$&$\mathrm{Alt}(5)$ &56\\
%$10\,080=2^5\cdot 3^2\cdot 5\cdot 7$&4&1&$\mathrm{Alt}(5)$ &168\\
%$10\,080=2^5\cdot 3^2\cdot 5\cdot 7$&4&1&$\mathrm{Alt}(7)$&4\\
%$10\,080=2^5\cdot 3^2\cdot 5\cdot 7$&4&2&$\mathrm{Alt}(5)\times\mathrm{PSL}_2(7)$&1\\
$6\,720=2^6\cdot 3\cdot 5\cdot 7$&$\mathrm{Alt}(5)$&112\\
$20\,160=2^6\cdot 3^2\cdot 5\cdot 7$&$\mathrm{Alt}(5)$&336\\
$20\,160=2^6\cdot 3^2\cdot 5\cdot 7$&$\mathrm{Alt}(6)$ &56\\
$20\,160=2^6\cdot 3^2\cdot 5\cdot 7$&$\mathrm{Alt}(7)$&8\\
$20\,160=2^6\cdot 3^2\cdot 5\cdot 7$& $\mathrm{Alt}(8)$ or $\mathrm{PSL}_3(4)$&1\\
$20\,160=2^5\cdot 3^2\cdot 5\cdot 7$&$\mathrm{Alt}(5)\times\mathrm{PSL}_2(7)$&2\\
$13\,440=2^7\cdot 3\cdot 5\cdot 7$&$\mathrm{Alt}(5)$ &224\\
$13\,440=2^7\cdot 3\cdot 5\cdot 7$&$\mathrm{PSL}_2(7)$ &80\\
$26\,880=2^8\cdot 3\cdot 5\cdot 7$&$\mathrm{Alt}(5)$ &448\\
$26\,880=2^8\cdot 3\cdot 5\cdot 7$&  $\mathrm{PSL}_2(7)$&160\\
$53\,760=2^9\cdot 3\cdot 5\cdot 7$&$\mathrm{Alt}(5)$ &896\\
$53\,760=2^9\cdot 3\cdot 5\cdot 7$&$\mathrm{PSL}_2(7)$  &320\\
%$161\,280=2^9\cdot 3^2\cdot 5\cdot 7$&4&1&$\mathrm{Alt}(5)$ &$2\,688$\\
%$161\,280=2^9\cdot 3^2\cdot 5\cdot 7$&4&1& $\mathrm{PSL}_2(7)$ &960\\
%$161\,280=2^9\cdot 3^2\cdot 5\cdot 7$&4&1&$\mathrm{Alt}(6)$ &448\\
%$161\,280=2^9\cdot 3^2\cdot 5\cdot 7$&4&1& $\mathrm{PSL}_2(8)$&320\\
%$161\,280=2^9\cdot 3^2\cdot 5\cdot 7$&4&1&$\mathrm{Alt}(7)$&64\\
%$161\,280=2^9\cdot 3^2\cdot 5\cdot 7$&4&1&$\mathrm{Alt}(8)$ or $\mathrm{PSL}_3(4)$&8\\
$107\,520=2^{10}\cdot 3\cdot 5\cdot 7$&$\mathrm{Alt}(5)$ &$1\,792$\\
$107\,520=2^{10}\cdot 3\cdot 5\cdot 7$&$\mathrm{PSL}_2(7)$ &640\\
$215\,040=2^{11}\cdot 3\cdot 5\cdot 7$&$\mathrm{Alt}(5)$ &$3\,584$\\
$215\,040=2^{11}\cdot 3\cdot 5\cdot 7$& $\mathrm{PSL}_2(7)$&$1\,280$\\
$430\,080=2^{12}\cdot 3\cdot 5\cdot 7$&$\mathrm{Alt}(5)$ &$7\,168$\\
$430\,080=2^{12}\cdot 3\cdot 5\cdot 7$&$\mathrm{PSL}_2(7)$ &$2\,560$\\
$660=2^2\cdot 3\cdot 5\cdot 11$& $\mathrm{PSL}_2(11)$&1\\
$1\,320=2^3\cdot 3\cdot 5\cdot 11$& $\mathrm{PSL}_2(11)$ &2\\
$1\,980=2^2\cdot 3^2\cdot 5\cdot 11$& $\mathrm{PSL}_2(11)$ &3\\
$2\,640=2^4\cdot 3\cdot 5\cdot 11$& $\mathrm{PSL}_2(11)$ &4\\
$3\,300=2^2\cdot 3\cdot 5^2\cdot 11$& $\mathrm{PSL}_2(11)$ &5\\
$3\,960=2^3\cdot 3^2\cdot 5\cdot 11$& $\mathrm{PSL}_2(11)$ &6\\
%$5\,280=2^5\cdot 3\cdot 5\cdot 11$&4&1& $\mathrm{PSL}_2(11)$ &8\\
%$5\,940=2^2\cdot 3^3\cdot 5\cdot 11$&4&1& $\mathrm{PSL}_2(11)$ &9\\
%$7\,920=2^4\cdot 3^2\cdot 5\cdot 22$&4&1& $\mathrm{PSL}_2(11)$ &12\\
$10\,560=2^6\cdot 3\cdot 5\cdot 11$& $\mathrm{PSL}_2(11)$ &16\\
%$11\,880=2^3\cdot 3^3\cdot 5\cdot 11$&4&1& $\mathrm{PSL}_2(11)$&18\\
$21\,120=2^7\cdot 3\cdot 5\cdot 11$& $\mathrm{PSL}_2(11)$ &32\\
%$42\,240=2^8\cdot 3\cdot 5\cdot 11$&4&1& $\mathrm{PSL}_2(11)$ &64\\
$84\,480=2^9\cdot 3\cdot 5\cdot 11$& $\mathrm{PSL}_2(11)$ &128\\
%$168\,960=2^{10}\cdot 3\cdot 5\cdot 11$&4&1&  $\mathrm{PSL}_2(11)$ &256\\
%$7\,920=2^4\cdot 3^2\cdot 5\cdot 22$&4&1& $M_{11}$&1\\
$7\,800=2^3\cdot 3\cdot 5^2\cdot 13$&$\mathrm{PSL}_2(25)$&1\\
$4\,080=2^4\cdot 3\cdot 5\cdot 17$& $\mathrm{PSL}_2(16)$&1\\
$8\,160=2^5\cdot 3\cdot 5\cdot 17$&$\mathrm{PSL}_2(16)$ &2\\
%$16\,320=2^6\cdot 3\cdot 5\cdot 17$&4&1&$\mathrm{PSL}_2(16)$ &4\\

$1\,092=2^2\cdot 3\cdot 7\cdot 13$&$\mathrm{PSL}_2(13)$&1\\
$2\,184=2^3\cdot 3\cdot 7\cdot 13$&$\mathrm{PSL}_2(13)$&2\\
%$4\,368=2^4\cdot 3\cdot 7\cdot 13$&4&1&$\mathrm{PSL}_2(13)$&4\\
\end{tabular}
\caption{Exceptions with $\ell=4$}
\label{tableexceptions3}
\end{table}

\subsection{The case $\ell=6$}\label{sec:6}From Table~\ref{tableexceptions1}, we have $T=J_1$. Since  $J_1$ has trivial Schur multiplier and trivial outer automorphism group, we deduce $R=T\times C$. As $|C|\in \{1,2,4\}$, the veracity of Theorem~\ref{thrm:main} for these groups can be verified with a computer. 

\subsection{The case $\ell=5$}\label{sec:5}
We use the information from Table~\ref{tableexceptions2}. When $T=\mathrm{PSL}_2(29)$, we have $|C|\le 2$ and hence we deduce that $R\in \{\mathrm{PSL}_2(29),\mathrm{SL}_2(29),\mathrm{PGL}_2(29)\}$. Here, we check Theorem~\ref{thrm:main} with a computer.

Suppose now $T=\mathrm{PSL}_2(p)$, with $p\in \{11,13\}$. Thus $R/C\cong\mathrm{PSL}_2(p)$ or $R/C\cong\mathrm{PGL}_2(p)$. Observe that $p$ is relatively prime to $|C|$. We adopt the ``bar'' notation for the projection of $R$ onto $\bar R=R/C$. Let $\bar{H}_1,\ldots,\bar{H}_\kappa$ be representatives for the $\bar R$-conjugacy classes of the maximal $p'$-subgroups of $\bar R$. Then the preimages $H_1,\ldots,H_\kappa$ are representatives for the $R$-conjugacy classes of the maximal $p'$-subgroups of $R$. Since the Sylow $p$-subgroups of $R$ are cyclic of order $p$ and since $R$ has at most 
$r/p$ Sylow $p$-subgroups, we deduce
\begin{align*}
|\mathrm{Sub}(R)|&\le \sum_{i=1}^\kappa|\mathrm{Sub}(H_i)|\cdot \frac{r}{|H_i|}\cdot 2\cdot \frac{r}{p}
\le \frac{2r^2c(2)}{p} \sum_{i=1}^\kappa |H_i|^{\frac{\log_2|H_i|}{4}-1}.
\end{align*}
We have implemented this function using the information on $\bar{R}$ and in all cases this bound is less than $c(2)|R|^{\log_2|R|/4}$.
\subsection{The case $\ell=4$}\label{sec:4}
We omit the analysis of this case. All groups can be checked with arguments analogous to the methods used in Section~\ref{sec:5}.

\section{Appendix}\label{Appendix}
\subsection{Proof of Proposition~\ref{proposition:appendix}}\label{sec:appendixproposition}We consider various cases depending on the value of $a_i$.

\smallskip

\noindent\textsc{Case $a_i=1$. }We have $S(p_i,a_i)=2$. As $\ell\ge 2$, we have $r/p_i\ge p_1\cdots p_{i-1}\cdot p_{i+1}\cdots p_\ell\ge 2$. Hence, we have
\begin{align}\label{friday111}
\frac{S(p_i,a_i)r}{(r^2/p_i^{a_i})^{a_i\frac{\log_2(p_i)}{4}}}&=\frac{2r}{(r^2/p_i)^{\frac{\log_2(p_i)}{4}}}=
2p_i^{\frac{\log_2(p_i)}{4}}r^{1-\frac{\log_2(p_i)}{2}}=
2p_i^{\frac{\log_2(p_i)}{4}}p_i^{1-\frac{\log_2(p_i)}{2}}\left(\frac{r}{p_i}\right)^{1-\frac{\log_2(p_i)}{2}}\\\nonumber
&\le  2 p_i^{1-\frac{\log_2(p_i)}{4}}2^{1-\frac{\log_2(p_i)}{2}}=
2^{2-\frac{\log_2(p_i)}{2}}p_i^{1-\frac{\log_2(p_i)}{4}}=2^{2+\frac{\log_2(p_i)}{2}-\frac{(\log_2(p_i))^2}{4}}
.
\end{align}
Set $x:=\log_2(p_i)$. Now, the expression $$2+\frac{x}{2}-\frac{x^2}{4}$$
is a parabola and it can be verified that it is negative when $x> 4$, that is, $p_i> 16$. In particular,~\eqref{eq:vera} follows from~\eqref{friday111} when $p_i> 16$.

Suppose $p_i\le 16$, that is, $p_i\le 13$ because $p_i$ is a prime number. Set $y=\log_2(r/p_i)$. By following the same computations as in~\eqref{friday111}, we obtain
\begin{align*}
\frac{S(p_i,a_i)r}{(r^2/p_i^{a_i})^{a_i\frac{\log_2(p_i)}{4}}}&
=2p_i^{1-\frac{\log_2(p_i)}{4}}2^{\left(1-\frac{\log_2(p_i)}{2}\right)y}=2^{1+\log_2(p_i)-\frac{(\log_2(p_i))^2}{4}+
\left(1-\frac{\log_2(p_i)}{2}\right)y
}
.
\end{align*}Now, the expression $$
1+\log_2(p_i)-\frac{(\log_2(p_i))^2}{4}+
\left(1-\frac{\log_2(p_i)}{2}\right)y
$$
is linear in $y$. When $p_i\ge 5$, we have $1-\log_2(p_i)/2<0$ and hence this linear expression is negative for $$y> \frac{\frac{(\log_2(p_i))^2}{4}-\log_2(p_i)-1}{1-\frac{\log_2(p_i)}{2}}.$$When $p_i=13$, we find $r/p_i=2^y> 2.8$ and hence~\eqref{eq:vera} is satisfied as long as $r/p_i> 2.8$. Thus, the only exceptions arising with $p_i=13$ are in~\eqref{appendix0}. All other cases are dealt with similarly, considering all primes $p_i\le 13$.

\smallskip

\noindent\textsc{Case $a_i=2$.} We have $S(p_i,a_i)=p_i+3$. 
When $p_i=2$, we have the exceptions in~\eqref{appendix1}. Therefore, for the rest of the argument we suppose $p_i\ge 3$. In particular, $p_i+3\le 2p_i$.
 As $\ell\ge2$, we have $r/p_i^{a_i}\ge 2$. Hence, we have
\begin{align}\label{friday}
\frac{(p_i+3)r}{(r^2/p_i^2)^{\frac{\log_2(p_i)}{2}}}&\le\frac{2p_i\cdot r}{(r^2/p_i^2)^{\frac{\log_2(p_i)}{2}}}=
2p_i^{1+\log_2(p_i)}r^{1-\log_2(p_i)}=
2p_i^{1+\log_2(p_i)}p_i^{2-2\log_2(p_i)}\left(\frac{r}{p_i^2}\right)^{1-\log_2(p_i)}\\\nonumber
&=2p_i^{3-\log_2(p_i)}\left(\frac{r}{p_i^2}\right)^{1-\log_2(p_i)}\le
2p_i^{3-\log_2(p_i)}2^{1-\log_2(p_i)}=
2^{2-\log_2(p_i)}p_i^{3-\log_2(p_i)} 
.
\end{align}
When $p_i\ge 7$, we have $2^{2-\log_2(p_i)}p_i^{3-\log_2(p_i)}<1$ and hence~\eqref{eq:vera} is satisfied.

 Assume $p_i=5$. If $r/p_i^2\ge 4$, then we may follow step by step the computations in~\eqref{friday} and replace $r/p_i^2$ with $4$ (rather than $2$); thus we obtain
$$\frac{(p_i+3)r}{(r^{2}/p_i^{a_i})^{a_i\frac{\log_2(p_i)}{4}}}\le 2^{3-2\log_2(p_i)}p_i^{3-\log_2(p_i)}<1.$$ If $r/p_i^2<4$, then $r=2\cdot p_i^2=50$ or $r=3\cdot p_i^2=75$ and we obtain the exceptions in~\eqref{appendix1}. 

Assume $p_i=3$. If $r/p_i^2\ge 64$, as above, we may follow step by step the computations in~\eqref{friday} and replace $r/p_i^2$ with $64=2^6$; thus we obtain
$$\frac{(p_i+3)r}{(r^{2}/p_i^{a_i})^{a_i\frac{\log_2(p_i)}{4}}}\le 2^{7-6\log_2(p_i)}p_i^{3-\log_2(p_i)}=0.83<1.$$ When $r/p_i^2<64$, we have computed explicitly the value on the left hand side of~\eqref{eq:vera} and we have verified that the only exceptions arise with $r/p_i^2\le 46$, that is, we obtain the exceptions in~\eqref{appendix1}.

\smallskip

\noindent\textsc{Case $a_i=3$.} We have $S(p_i,a_i)=2p_i^2+2p_i+4$. Moreover, we have
\begin{align*}
\frac{(2p_i^2+2p_i+4)r}{(r^2/p_i^3)^{3\frac{\log_2(p_i)}{4}}}
&\le
\frac{4p_i^2\cdot r}{(r^2/p_i^3)^{3\frac{\log_2(p_i)}{4}}}=
2^2p_i^{2+9\frac{\log_2(p_i)}{4}}r^{1-3\frac{\log_2(p_i)}{2}}=
2^2p_i^{2+9\frac{\log_2(p_i)}{4}}p_i^{3-9\frac{\log_2(p_i)}{2}}\left(\frac{r}{p_i^3}\right)^{1-3\frac{\log_2(p_i)}{2}}\\\nonumber
&=2^2p_i^{5-9\frac{\log_2(p_i)}{4}}\left(\frac{r}{p_i^3}\right)^{1-3\frac{\log_2(p_i)}{2}}\le
2^2p_i^{5-9\frac{\log_2(p_i)}{4}}2^{1-3\frac{\log_2(p_i)}{2}}=
2^{3-3\frac{\log_2(p_i)}{2}}p_i^{5-9\frac{\log_2(p_i)}{4}}\\
&=2^{3+7\frac{\log_2(p_i)}{2}-9\frac{(\log_2(p_i))^2}{4}} 
.
\end{align*}
This number is less then $1$ for each $p_i>3$ and hence~\eqref{eq:vera} is satisfied in these cases.

Assume $p_i=3$. If $r/p_i^2\ge 16$, as usual, we may follow the computations above but replacing $r/p_i^3$ with $16=2^4$; thus we obtain
$$\frac{(2p_i^2+2p_i+4)r}{(r^{2}/p_i^{a_i})^{a_i\frac{\log_2(p_i)}{4}}}\le 2^2p_i^{5-9\frac{\log_2(p_i)}{4}}2^{4-6\log_2(p_i)}=0.42<1.$$ When $r/p_i^3<16$, we have computed explicitly the value on the left hand side of~\eqref{eq:vera} and we have verified that the only exceptions arise with $r/p_i^3\le 7$, that is, we obtain the exceptions in~\eqref{appendix2}. 

Assume $p_i=2$. We obtain
$$\frac{(2p_i^2+2p_i+4)r}{(r^{2}/p_i^{a_i})^{a_i\frac{\log_2(p_i)}{4}}}=\frac{16r}{(r^2/8)^{3/4}}= \frac{2^{25/4}}{r^{1/2}}.$$ This expression is less than $1$ when $r\ge 5\, 800$, therefore the only exceptions arise when $r/p_i^3\le 723$, that is, we obtain the exceptions in~\eqref{appendix2}. 

\smallskip

\noindent\textsc{Case $a_i\in \{4,5\}$.}
These values of $a_i$ are checked similarly; indeed, for $a_i\in \{4,5\}$,~\eqref{eq:vera} is satisfied, except for the cases listed in~\eqref{appendix3} and~\eqref{appendix4}.

\smallskip

\noindent\textsc{Case $a_i\ge 6$.} We have $S(p_i,a_i)=c(p_i)p_i^{a_i^2/4}$. Assume $p_i\ge 3$. Using $c(p_i)\le c(3)<4$, we have
\begin{align*}
\frac{c(p_i)p_i^{\frac{a_i^2}{4}}r}{(r^2/p_i^{a_i})^{a_i\frac{\log_2(p_i)}{4}}}
&=
c(p_i)p_i^{\frac{a_i^2}{4}+\frac{a_i^2\log_2(p_i)}{4}}r^{1-\frac{a_i\log_2(p_i)}{2}}=
c(p_i)p_i^{\frac{a_i^2}{4}+\frac{a_i^2\log_2(p_i)}{4}+a_i-\frac{a_i^2\log_2(p_i)}{2}}\left(\frac{r}{p_i^{a_i}}\right)^{1-\frac{a_i\log_2(p_i)}{2}}\\
&<c(3)p_i^{\frac{a_i^2}{4}-\frac{a_i^2\log_2(p_i)}{4}+a_i}2^{1-\frac{a_i\log_2(p_i)}{2}}
=2^{3-\frac{a_i\log_2(p_i)}{2}}p_i^{\frac{a_i^2}{4}-\frac{a_i^2\log_2(p_i)}{4}+a_i}.
\end{align*}
Summing up,
\begin{equation}\label{saturday1}
\frac{S(p_i,a_i)r}{(r^{2}/p_i^{a_i})^{a_i\frac{\log_2(p_i)}{4}}}\le 2^{3-\frac{a_i\log_2(p_i)}{2}}p_i^{\frac{a_i^2}{4}-\frac{a_i^2\log_2(p_i)}{4}+a_i}.
\end{equation}

Now, $$3-\frac{a_i\log_2(p_i)}{2}\le 3-\frac{a_i\log_2(3)}{2}\le 3-\frac{6\cdot\log_2(3)}{2}<0.$$ 
Similarly, for $(a_i,p_i)\ne (6,3)$, one can check that $$\frac{a_i^2}{4}-\frac{a_i^2\log_2(p_i)}{4}+a_i<0.$$
Therefore, for $(a_i,p_i)\ne (6,3)$,~\eqref{eq:vera} follows from~\eqref{saturday1}. Finally, when $(a_i,p_i)=(6,3)$, we have 
$$2^{3-\frac{a_i\log_2(p_i)}{2}}p_i^{\frac{a_i^2}{4}-\frac{a_i^2\log_2(p_i)}{4}+a_i}=0.6646<1$$
and hence~\eqref{eq:vera} follows again from~\eqref{saturday1}.

Finally, assume $p_i=2$ and set $r'=r/p_i^{a_i}$. We have
\begin{align*}
\frac{c(p_i)p_i^{\frac{a_i^2}{4}}r}{(r^2/p_i^{a_i})^{a_i\frac{\log_2(p_i)}{4}}}
&=
c(2)2^{\frac{a_i^2}{4}+\frac{a_i^2}{4}}r^{1-\frac{a_i}{2}}=
c(2)2^{\frac{a_i^2}{2}}r^{1-\frac{a_i}{2}}=c(2)2^{\frac{a_i^2}{2}}2^{a_i-\frac{a_i^2}{2}}r'^{1-\frac{a_i}{2}}\\
&=c(2)2^{a_i}r'^{1-\frac{a_i}{2}}=c(2)2^{a_i}2^{(1-a_i/2)\log_2 r'}<2^{3+a_i+(1-a_i/2)\log_2(r')}.
\end{align*}
Now, consider the function $g(a_i,r')=3+a_i+(1-a_i/2)\log_2 (r')$, where we think of $a_i$ and $r'$ as continuous variables. We have $\partial g/\partial{a_i}=1-\log_2(r')/2\le 0$. Therefore,
$$g(a_i,r')\le 3+6+(1-6/2)\log_2(r')=9-2\log_2(r').$$
Now, $9-2\log_2(r')\le 0$, when $\log_2(r')\ge 9/2$, that is, $r'\ge 22$. In particular, the only exceptions to~\eqref{eq:vera} arise when $r/p_i^{a_i}=r'\le 21$, as stated in~\eqref{appendix5}.

\subsection{Directional monotonicity of $f(r)$ and proof of Lemma~\ref{direction1}}\label{sec:directionalmonotonicity}Let $i\in \{1,\ldots,\ell\}$. By considering the discrete variable $p_i$ as continue, we find
\begin{equation*}
\frac{\partial f(r)}{\partial p_i}=-\frac{a_i\log c(2)}{p_i(\log r)^2}+\frac{a_i}{4p_i\log 2 }-\frac{\partial S(p_i,a_i)/\partial p_i}{S(p_i,a_i)\log r}+\frac{a_i\log S(p_i,a_i)}{p_i(\log r)^2}-\frac{(\ell-2)a_i\log 2}{p_i(\log r)^2}.
\end{equation*}
We are interested in showing $\partial f(r)/\partial p_i\ge 0$, $\forall p_i\ge 2$, where $f(r)=f(p_1,\ldots,p_\ell)$ is thought as a function in $p_1,\ldots,p_\ell$ with $a_1,\ldots,a_\ell$ being fixed. From this, the proof of Lemma~\ref{direction1} immediately follows.

Multiplying by $p_i(\log r)^2$, we obtain
\begin{equation}\label{partial2}
p_i(\log r)^2\frac{\partial f(r)}{\partial p_i}=-a_i\log c(2)+\frac{a_i(\log r)^2}{4\log 2 }-\frac{p_i\partial S(p_i,a_i)/\partial p_i}{S(p_i,a_i)}\log r+a_i\log S(p_i,a_i)-(\ell-2)a_i\log 2.
\end{equation}
We now distinguish various cases depending on $a_i$. 

\smallskip

\noindent\textsc{Case $a_i=1$.} Then $S(p_i,a_i)=2$ and, from~\eqref{partial2}, we obtain
\begin{equation}\label{eq:veraveraveraserbia}
p_i(\log r)^2\frac{\partial f(r)}{\partial p_i}=\log (8/c(2))+\frac{(\log r)^2}{4\log 2 }-\ell\log 2.
\end{equation}
We have
\begin{equation}\label{vera:not here}
\ell\le \log_2 r=\frac{\log r}{\log 2},
\end{equation}
because $\ell$ is the number of prime factors of $r$. From~\eqref{eq:veraveraveraserbia} and~\eqref{vera:not here} and from $c(2)<8$, we have
\begin{equation*}
p_i(\log r)^2\frac{\partial f(r)}{\partial p_i}\ge \log (8/c(2))+\frac{(\log r)^2}{4\log 2 }-\log r\ge \frac{(\log r)^2}{4\log 2}-\log r.
\end{equation*}
The formula appearing on the right hand side of $p_i(\log r)^2\partial f(r)/\partial p_i$ is positive for $\log r\ge 4\log 2$, that is, $r\ge 16$. In particular, $\partial f(r)/\partial p_i\ge 0$, for each $r$ with $r\ge 16$. Recalling that $\ell\ge 3$, the condition $r\ge 16$ is automatically satisfied.

\smallskip

\noindent\textsc{Case $a_i=2$.} Then $S(p_i,a_i)=p_i+3$ and $\partial S(p_i,a_i)/\partial p_i=1$ and, from~\eqref{partial2}, we obtain
\begin{equation*}
p_i(\log r)^2\frac{\partial f(r)}{\partial p_i}=-2\log c(2)+\frac{(\log r)^2}{2\log 2 }-\log r\frac{p_i}{p_i+3}+2\log(p_i+3)-2(\ell-2)\log 2.
\end{equation*}
As above, by using $p_i\ge 2$ in the first inequality, by using $\log(16/c(2)^2)+2\log(5)\ge 1.99$ in the second inequality and by using~\eqref{vera:not here} in the last inequality, we deduce
\begin{align*}
p_i(\log r)^2\frac{\partial f(r)}{\partial p_i}&= \log (16/c(2)^2)+\frac{(\log r)^2}{2\log 2 }-\log r\frac{p_i}{p_i+3}+2\log(p_i+3)-2\ell\log 2\\
&\ge \log (16/c(2)^2)+\frac{(\log r)^2}{2\log 2 }-\log r+2\log(5)-2\ell\log 2\\
&\ge \frac{(\log r)^2}{2\log 2 }-\log r-2\ell\log 2+1.99\ge
\frac{(\log r)^2}{2\log 2 }-\log r-2\log r+1.99=\frac{(\log r)^2}{2\log 2 }-3\log r+1.99.
\end{align*}The formula appearing on the right hand side of $p_i(\log r)^2\partial f(r)/\partial p_i$ is quadratic in $\log r$ and it can be verified that it is positive for $r\ge 28$.   As $\ell\ge 3$ by~\eqref{C}, we have $r\ge 2\cdot 3\cdot 5=30$ and hence the condition $r\ge 28$ is automatically satisfied. 
Thus  $\partial f(r)/\partial p_i\ge 0$, $\forall p_i\ge 2$. 

\smallskip

\noindent\textsc{Case $a_i=3$.} Then $S(p_i,a_i)=2p_i^2+2p_i+4$ and $\partial S(p_i,a_2)/\partial p_i=4p_i+2$ and, from~\eqref{partial2}, we obtain
\begin{equation*}
p_i(\log r)^2\frac{\partial f(r)}{\partial p_i}=-3\log c(2)+\frac{3(\log r)^2}{4\log 2 }-\frac{4p_i^2+2p_i}{2p_i^2+2p_i+4}\log r+3\log(2p_i^2+2p_i+4)-3(\ell-2)\log 2.
\end{equation*}
We deduce
\begin{align*}
p_i(\log r)^2\frac{\partial f(r)}{\partial p_i}&\ge-3\log c(2)+\frac{3(\log r)^2}{4\log 2 }-2\log r+3\log(2p_i^2+2p_i+4)-3(\ell-2)\log 2\\
&\ge -3\log c(2)+\frac{3(\log r)^2}{4\log 2 }-2\log r+3\log(16)-3(\ell-2)\log 2\\
&=-3\log c(2)+\frac{3(\log r)^2}{4\log 2 }-2\log r+18\log(2)-3\ell\log 2\\
&\ge -3\log c(2)+\frac{3(\log r)^2}{4\log 2 }-2\log r+18\log(2)-3\log r\\
&=-3\log c(2)+\frac{3(\log r)^2}{4\log 2 }-5\log r+18\log(2).
\end{align*}
Now, using this expression, it can be verified that  $\partial f(r)/\partial p_i$ is always positive.

%Suppose $a_i=4$. Then $S(p_i,a_i)=p_i^4+3p_i^3+4p_i^2+3p_i+5$ and $\partial S(p_i,a_2)/\partial p_i=4p_i^3+9p_i^2+8p_i+3$ and hence~\eqref{partial2} %becomes
%\begin{equation*}
%(\log r)^2\frac{\partial f(r)}{\partial p_i}:=-\frac{4\log c(2)}{p_i}+\frac{(\log r)^2}{p_i\log 2 }-\frac{4p_i^3+9p_i^2+8p_i+3}%{p_i^4+3p_i^3+4p_i^2+3p_i+5}\log r+\frac{4\log(p_i^4+3p_i^3+4p_i^2+3p_i+5)}{p_i}-\frac{4(\ell-2)\log 2}{p_i}.
%\end{equation*}
%Now, using this expression 
% and using the  fact that $\ell\le \log r$, it can be verified that  $(\log r)^2\partial f(r)/\partial p_i$ is positive.
   
\smallskip

\noindent\textsc{Case $a_i\in \{4,5\}$.} These cases are analogous and their proof is omitted.

\smallskip

\noindent\textsc{Case $a_i\ge 6$.} Then $S(p_i,a_i)=c(p_i)p_i^{\frac{a_i^2}{4}}$ and 
$$
\frac{\partial S(p_i,a_i)}{\partial p_i}=c(p_i)\frac{a_i^2}{4}p_i^{\frac{a_i^2}{4}-1}+\frac{\partial c(p_i)}{\partial p_i}p_i^{\frac{a_i^2}{4}}\le 
c(p_i)\frac{a_i^2}{4}p_i^{\frac{a_i^2}{4}-1},$$
because $c(p_i)$ is a strictly decreasing function. Thus, from~\eqref{partial2}, we deduce
\begin{equation*}
p_i(\log r)^2\frac{\partial f(r)}{\partial p_i}\ge -a_i\log c(2)+\frac{a_i(\log r)^2}{4\log 2 }-\frac{a_i^2}{4}\log r+a_i\log S(p_i,a_i)-(\ell-2)a_i\log 2.
\end{equation*}
Using~\eqref{vera:not here}, we deduce
\begin{align}\label{eq:refine}
p_i(\log r)^2\frac{\partial f(r)}{\partial p_i}&\ge-a_i\log (c(2)/4)+\frac{a_i(\log r)^2}{4\log 2 }-\frac{a_i^2}{4}\log r+a_i\log S(p_i,a_i)-a_i\log r\\\nonumber
&\ge \frac{a_i(\log r)^2}{4\log 2 }-\frac{a_i^2}{4}\log r-a_i\log r,
\end{align}
where in the last inequality we are using $S(p_i,a_i)\ge S(2,6)\ge 2^9$ and hence $\log S(p_i,a_i)\ge \log(c(2)/4)$. Therefore,
\begin{equation}\label{hup}\frac{p_i\log r}{a_i}\frac{\partial f(r)}{\partial p_i}\ge\frac{\log r}{4\log 2}-\frac{a_i}{4}-1.\end{equation}
When $p_i\ge 3$, we have $r\ge 3^{a_i}\cdot 2\cdot 5$, because $\ell\ge 3$. Hence $\log r\ge a_i\log 3+\log 10$. Therefore, $$\frac{\log r}{4\log 2}\ge \frac{a_i\log 3}{4\log 2}+\frac{\log 10}{4\log 2}.$$
This inequality and the fact that $a_i\ge 6$ show that~\eqref{hup} is satisfied and hence $\partial f(r)/\partial p_i\ge 0$.
Assume then $p_i=2$. When $r/2^{a_i}>15$, we have  $\log r\ge a_i\log 3+\log 16$. Therefore, $$\frac{\log r}{4\log 2}\ge \frac{a_i}{4}+\frac{\log 16}{4\log 2}=\frac{a_i}{4}+1$$
and hence~\eqref{hup} shows that $\partial f(r)/\partial p_i\ge 0$.
Finally, when $p_i=2$ and $r/2^{a_i}<16$, we must have $r=2^{a_i}\cdot 3\cdot 5$, because $\ell\ge 3$. In this particular case, we may refine the computations in~\eqref{eq:refine}. Indeed, by repeating the same steps except by replacing $S(p_i,a_i)$ with $c(2)2^{a_i^2/4}$, we have
\begin{align}\label{eq:refine1}
p_i(\log r)^2\frac{\partial f(r)}{\partial p_i}&\ge-a_i\log c(2)+\frac{a_i(\log r)^2}{4\log 2 }-\frac{a_i^2}{4}\log r+a_i\log S(p_i,a_i)-(\ell-2)a_i\log 2\\\nonumber
&\ge-a_i\log c(2)+\frac{a_i(\log r)^2}{4\log 2 }-\frac{a_i^2}{4}\log r+a_i\log c(2)+\frac{a_i^3}{4}\log 2-a_i\log r+4\log 2\\\nonumber
&\ge \frac{a_i(\log r)^2}{4\log 2 }+\frac{a_i^3}{4}\log 2-\frac{a_i^2}{4}\log r-a_i\log r.
\end{align}
Therefore,
$$\frac{p_i\log r}{a_i}\frac{\partial f(r)}{\partial p_i}\ge  \frac{\log r}{4\log 2 }+\frac{a_i^2}{4}\frac{\log 2}{\log r}-\frac{a_i}{4}-1.$$
It is now clear that  also in this case $\partial f(r)/\partial p_i\ge 0$.

\begin{proof}[Proof of Lemma~$\ref{direction1}$]
We have proved that the directional derivative $\partial/\partial p_i$ of $f(r)$ is positive. Since $r$ is good, we have $f(r)> 0$. Therefore, as $p'>p_i$, $f(r')\ge f(r)> 0$ and $r'$ is also good.
\end{proof}

\subsection{Increasing the number of prime factors and proof of Lemma~\ref{direction2}}\label{sec:direction2}
Let $p$ be a prime number with $p\notin \{p_1,\ldots,p_\ell\}$ and let $r'=r\cdot p$. Observe that $r$ has $\ell$ distinct prime factors, whereas $r'$ has $\ell+1$ distinct prime factors. We have
\begin{align*}
f(r')-f(r)=&\log c(2)\left(\frac{1}{\log r'}-\frac{1}{\log r}\right)+\frac{\log p}{4\log 2}-\frac{\log 2}{\log r'}-\sum_{i=1}^\ell \log S(p_i,a_i)\left(\frac{1}{\log r'}-\frac{1}{\log r}\right)\\
&-1+(\ell-1)\frac{\log 2}{\log r'}-(\ell-2)\frac{\log 2}{\log r}\\
=&-\frac{\log c(2)\log p}{\log r \log r'}+\frac{\log p}{4\log 2}-\sum_{i=1}^\ell \log S(p_i,a_i)\left(\frac{1}{\log r'}-\frac{1}{\log r}\right)-1+(\ell-2)\frac{\log 2}{\log r'}-(\ell-2)\frac{\log 2}{\log r}\\
=&-\frac{\log c(2)\log p}{\log r \log r'}+\frac{\log p}{4\log 2}+\left(\sum_{i=1}^\ell \log S(p_i,a_i)\right)\frac{\log p}{\log r\log r'}-1-(\ell-2)\frac{\log 2\log p}{\log r\log r'}.
\end{align*}
Now, when $i\ge 2$, we have $S(p_i,a_i)\ge 2$ and hence $\log S(p_i,a_i)\ge  \log 2$. 
Similarly, when $i=1$, we have $p_1=2$ and $a_1\ge 2$ by~\eqref{B}; hence $S(p_i,a_i)\ge S(2,2)=5\ge 4$; thus $\log S(p_i,a_i)\ge  2\log 2$. Taking this into account, we obtain
\begin{align*}
f(r')-f(r)&\ge -\frac{\log c(2)\log p}{\log r \log r'}+\frac{\log p}{4\log 2}+\frac{(\ell+1)\log 2\log p}{\log r\log r'}-1-(\ell-2)\frac{\log 2\log p}{\log r\log r'}\\
&=
-\frac{\log c(2)\log p}{\log r \log r'}+\frac{\log p}{4\log 2}+\frac{3\log 2\log p}{\log r\log r'}-1
\ge \frac{\log p}{4\log 2}-1,
\end{align*}
where in the last inequality we have used $8>c(2)$, that is, $3\log 2\ge \log c(2)$. 

\begin{proof}[Proof of Lemma~$\ref{direction2}$]
As  $p\ge 17$, we have $\log p/(4\log 2)-1\ge 0$. Therefore, $f(r')-f(r)\ge \log p/(4\log 2)-1\ge 0$. Since $r$ is good, we have $f(r)> 0$. Thus $f(r')> 0$ and $r'$ is good.
\end{proof}

\subsection{Increasing the multiplicity of a prime}\label{sec:direction3}
Let $i\in \{1,\ldots,\ell\}$. We think of $a_i$ as a continuous variable and we compute $\partial \mathtt{f}(r)/\partial a_i$. We are interested in showing $\partial\mathtt{f}(r)/\partial a_i\ge 0$, where $f(r)=f(a_1,\ldots,a_\ell)$ is thought as a function of $a_1,\ldots,a_\ell$ with $p_1,\ldots,p_\ell$ being fixed. From this Lemma~\ref{direction3} immediately follows.

We have
\begin{align*}
\frac{\partial \mathtt{f}(r)}{\partial a_i}&=
-\frac{\log p_i\log c(2)}{(\log r)^2}+\frac{\log p_i}{4\log 2}-\frac{a_i\log p_i}{2\log r}+\frac{\log p_i\log (c(p_i)p_i^{a_i^2/4})}{(\log r)^2}-(\ell-2)\frac{\log 2\log p_i}{(\log r)^2}.
\end{align*}
Multiplying this by $(\log r)^2$, we obtain
\begin{align}\label{wednesdayevening}
(\log r)^2\frac{\partial \mathtt{f}(r)}{\partial a_i}&=
-\log p_i\log c(2)
+
\frac{\log p_i}{4\log 2}(\log r)^2-\frac{a_i\log p_i}{2}\log r
+
\log p_i\log(c(p_i)p_i^{a_i^2/4})
-
(\ell-2)\log 2\log p_i.
\end{align}
In particular, using~\eqref{vera:not here} and $c(p_i)\ge 1$, we get
\begin{align}\label{vera:not cool}
\frac{(\log r)^2}{\log p_i}\frac{\partial \mathtt{f}(r)}{\partial a_i}&\ge
-\log (c(2)/4)
+
\frac{(\log r)^2}{4\log 2}-\frac{a_i}{2}\log r
+
\frac{a_i^2\log p_i}{4}
-
\log r.
\end{align}
Write $r=p_i^{a_i}r'$. Replacing $r$ in~\eqref{vera:not cool} with $p_i^{a_i}r'$, we obtain
\begin{align*}
\frac{(\log r)^2}{\log p_i}\frac{\partial \mathtt{f}(r)}{\partial a_i}\ge&
-\log (c(2)/4)
+
\frac{a_i^2(\log p_i)^2}{4\log 2}+\frac{(\log r')^2}{4\log 2}+\frac{a_i\log p_i\log r'}{2\log 2}
\\&-\frac{a_i^2\log p_i}{2}-\frac{a_i\log r'}{2}
+
\frac{a_i^2\log p_i}{4}
-
a_i\log p_i-\log r'\\
=&
\left(
-\log (c(2)/4)+\frac{(\log r')^2}{4\log 2}-\log r'
\right)
+\left(
\frac{a_i^2(\log p_i)^2}{4\log 2}-\frac{a_i^2\log p_i}{4}
\right)\\
&+\frac{a_i\log p_i\log r'}{2\log 2}-\frac{a_i\log r'}{2}-a_i\log p_i
.
\end{align*}
Now, $\log p_i\ge \log 2$ and hence
$$\frac{a_i^2(\log p_i)^2}{4\log 2}-\frac{a_i^2\log p_i}{2}+\frac{a_i^2\log p_i}{4}\ge 0.$$

Assume, for the time being, $p_i\ge 3$ and $r'\ge 27$.
If we set $x:=\log r'$, then 
$$-\log (c(2)/4)+\frac{(\log r')^2}{4\log 2}-\log r'=-\log(c(2)/4)+\frac{x^2}{4\log 2}-x$$
is a parabola and it is not hard to verify that it is positive when $r'\ge 27$.
Furthermore, 
$$\frac{a_i\log p_i\log r'}{2\log 2}-\frac{a_i\log r'}{2}-a_i\log p_i\ge a_i\left(\frac{\log 3\log r'}{2\log 2}-\frac{r'}{2}-\log 3\right)\ge 0,$$
where in the last inequality we are using again $r'\ge 27$. In particular, we have shown that $\partial \mathtt{f}(r)/\partial a_i\ge 0$, provided that $r'\ge 27$ and $p_i\ge 3$.

Assume now $p_i=2$. Thus $i=1$ and $r=2^{a_1}r'$. By specializing~\eqref{wednesdayevening} with $p_i=2$ and by using $\log r=a_1\log 2+\log r'$, we obtain
\begin{align*}
\frac{(\log r)^2}{\log 2}\frac{\partial \mathtt{f}(r)}{\partial a_i}&=
-\log c(2)
+
\frac{(\log r)^2}{4\log 2}-\frac{a_i\log r}{2}
+\log c(2)+\frac{a_i^2\log 2}{4}
-
(\ell-2)\log 2\\
&=
\frac{(\log r)^2}{4\log 2}-\frac{a_i\log r}{2}
+\frac{a_i^2\log 2}{4}
-
(\ell-2)\log 2\\
&=
\frac{a_1^2\log 2}{4}+\frac{(\log r')^2}{4\log 2}+\frac{a_1\log r'}{2}-\frac{a_1^2\log 2}{2}-\frac{a_1\log r'}{2}+\frac{a_1^2\log 2}{4}-(\ell-2)\log 2\\
&=
\frac{(\log r')^2}{4\log 2}-(\ell-2)\log 2.
\end{align*}
Now, $r'=p_2^{a_2}\cdots p_\ell^{a_\ell}$ with $p_2,\ldots,p_\ell\ge 3$ and hence $\log r'\ge \ell-1$. We deduce
$$\frac{(\log r)^2}{\log 2}\frac{\partial \mathtt{f}(r)}{\partial a_i}\ge \frac{(\log r')^2}{4\log 2}-(\log r'-1)\log 2.$$
The expression appearing on the right hand side is a parabola in $x:=\log r'$ and it is not hard to verify that it is positive. Therefore, $\partial\mathtt{f}(r)/\partial a_i\ge 0$ also in this case.

Finally, suppose $p_i\ge 3$ and $r'< 27$. Since the product of three distinct primes is at least $30$, we deduce $\ell=3$. Assume, $r'\ge 10$. By specializing~\eqref{wednesdayevening} with $\ell=3$ and by using $$a_i\log p_i+\log 10\le \log r=a_i\log p_i+\log r'\le a_i\log p_i+\log 26,$$ 
we obtain
\begin{align*}
\frac{(\log r)^2}{\log p_i}\frac{\partial \mathtt{f}(r)}{\partial a_i}&=
-\log c(2)
+
\frac{(\log r)^2}{4\log 2}
-\frac{a_i\log r}{2}
+\frac{a_i^2\log p_i}{4}
-
\log 2\\
&=
-\log (c(2)/2)
+
\frac{(\log r)^2}{4\log 2}
-\frac{a_i\log r}{2}
+\frac{a_i^2\log p_i}{4}\\
&\ge 
-\log (c(2)/2)
+
\frac{a_i^2(\log p_i)^2}{4\log 2}+\frac{(\log 10)^2}{4\log 2}+\frac{a_i\log p_i\log 10}{2\log 2}
-\frac{a_i^2\log p_i}{2}-\frac{a_i\log 26}{2}
+\frac{a_i^2\log p_i}{4}\\
&=
-\log (c(2)/2)
+
\frac{a_i^2(\log p_i)^2}{4\log 2}+\frac{(\log 10)^2}{4\log 2}+\frac{a_i\log p_i\log 10}{2\log 2}
-\frac{a_i^2\log p_i}{4}-\frac{a_i\log 26}{2}\\
&\ge 
\frac{a_i^2(\log p_i)^2}{4\log 2}+\frac{a_i\log p_i\log 10}{2\log 2}
-\frac{a_i^2\log p_i}{4}-\frac{a_i\log 26}{2},
\end{align*}
where the last inequality follows because $-\log(c(2)/2)+(\log 10)^2/(4\log 2)>0$. Observe that
\begin{align*}
\frac{a_i^2(\log p_i)^2}{4\log 2}-\frac{a_i^2\log p_i}{4}&=\frac{a_i^2\log p_i}{4}\left(\frac{\log p_i}{\log 2}-1\right)\ge 0,
\end{align*}
because $p_i\ge 3$. Observe also that
\begin{align*}
\frac{a_i\log p_i\log 10}{2\log 2}
-\frac{a_i\log 26}{2}
&=
\frac{a_i}{2}\left(\frac{\log p_i\log 10}{\log 2}-\log 26\right)\ge 0,
\end{align*}
because $p_i\ge 3$. Therefore $\partial\mathtt{f}(r)/\partial a_i\ge 0$ in this case. It remains to consider the case $r'<10$. Since $\ell=3$, $r'$ is the product of two distinct primes and hence $r'=6$. In other words, $i=\ell=3$ and $r=2\cdot 3\cdot p_i^{a_i}$. By repeating the computations above with this value of $r$ and $r'$, we obtain
\begin{align*}
\frac{(\log r)^2}{\log p_i}\frac{\partial \mathtt{f}(r)}{\partial a_i}&=
-\log (c(2)/2)
+
\frac{(\log r)^2}{4\log 2}
-\frac{a_i\log r}{2}
+\frac{a_i^2\log p_i}{4}\\
&\ge 
-\log (c(2)/2)
+
\frac{a_i^2(\log p_i)^2}{4\log 2}+\frac{(\log 6)^2}{4\log 2}+\frac{a_i\log p_i\log 6}{2\log 2}
-\frac{a_i^2\log p_i}{2}-\frac{a_i\log 6}{2}
+\frac{a_i^2\log p_i}{4}\\
&=
-\log (c(2)/2)
+
\frac{a_i^2(\log p_i)^2}{4\log 2}+\frac{(\log 6)^2}{4\log 2}+\frac{a_i\log p_i\log 6}{2\log 2}
-\frac{a_i^2\log p_i}{4}-\frac{a_i\log 6}{2}.
\end{align*}
Now, using $a_i\ge 1$ and $p_i\ge 5$, we have
$$\frac{a_i\log p_i\log 6}{2\log 2}-\frac{a_i\log 6}{2}=\frac{a_i\log 6}{2}\left(
\frac{\log p_i}{\log 2}-1
\right)\ge \frac{a_i\log 6}{2}\ge \frac{\log 6}{2}.$$
Therefore,
\begin{align*}
\frac{(\log r)^2}{\log p_i}\frac{\partial \mathtt{f}(r)}{\partial a_i}
&\ge
-\log (c(2)/2)+\frac{\log 6}{2}+\frac{(\log 6)^2}{4\log 2}
+
\frac{a_i^2(\log p_i)^2}{4\log 2}
-\frac{a_i^2\log p_i}{4}\\
&\ge
\frac{a_i^2(\log p_i)^2}{4\log 2}
-\frac{a_i^2\log p_i}{4}=\frac{a_i^2\log p_i}{4}\left(\frac{\log p_i}{\log 2}-1\right)\ge 0,
\end{align*}
where the second inequality follows with a calculation and the third inequality follows because $p_i\ge 5$.

\begin{proof}[Proof of Lemma~$\ref{direction3}$]
From above $\partial \mathtt{f}/\partial a_i\ge 0$. Thus if $\mathtt{f}(r)> 0$, then $\mathtt{f}(r')=\mathtt{f}(r\cdot p_i)\ge \mathtt{f}(r)> 0$ and hence $r'$ is $\mathtt{good}$.
\end{proof}
\subsection{Proof of Lemma~\ref{bounds:technical}}\label{bounds:technicalsec}
We start by proving part~\eqref{bounds:technical1}. As $r\ge 2\,000$, we have
\begin{align*}
(r/14)^{\frac{\log_2(r/14)}{4}}&=
2^{-\frac{\log_2(r/14)}{4}}
(r/7)^{\frac{\log_2(r/14)}{4}}\le 2^{-\frac{\log_2(2\,000/14)}{4}}(r/7)^{\frac{\log_2(r/14)}{4}} \\
&=2^{-\frac{\log_2(2\,000/14)}{4}}(r/7)^{-\frac{1}{4}}(r/7)^{\frac{\log_2(r/7)}{4}}\le
2^{-\frac{\log_2(2\,000/14)}{4}}(2\,000/7)^{-\frac{1}{4}}(r/7)^{\frac{\log_2(r/7)}{4}}\\
&\le 2^{-3}(r/7)^{\frac{\log_2(r/7)}{4}},
\end{align*}
where the last inequality follows with a computation. Therefore,
\begin{align*}
4r(r/7)^{\frac{\log_2(r/7)}{4}}+8r(r/14)^{\frac{\log_2(r/14)}{4}}\le 
5r(r/7)^{\frac{\log_2(r/7)}{4}}<5\cdot 2^{\log_2(r)}\cdot 2^{\frac{(\log_2(r/7))^2}{4}}=2^{\frac{(\log_2(r/7))^2}{4}+\log_2(r)+\log_2(5)}.
\end{align*}
Similarly, $r^{\log_2(r)/4}=2^{(\log_2(r))^2/4}$. We show that
\begin{equation}\label{mondaymonday}
\frac{(\log_2(r/7))^2}{4}+\log_2(r)+\log_2(5)\le\frac{(\log_2(r))^2}{4},
\end{equation}
from which~\eqref{bounds:technical1} immediately follows. Rearranging the summands in~\eqref{mondaymonday} and using $\log_2(r/7)=\log_2(r)-\log_2(7)$, we obtain the equivalent inequality
$$\left(\frac{\log_2(7)}{2}-1\right)\log_2(r)\ge \log_2(5)+\frac{(\log_2(7))^2}{4},$$
that is,
\begin{equation}\label{mondaynew}\log_2(r)\ge \frac{\log_2(5)+\frac{(\log_2(7))^2}{4}}{\frac{\log_2(7)}{2}-1}=10.63.
\end{equation}
As $r\ge 2\,000$, we have $\log_2(r)\ge 10.96$ and hence~\eqref{mondaynew} is satisfied.

We  prove part~\eqref{bounds:technical2}. We have
\begin{align*}
8r(r/28)^{\frac{\log_2(r/28)}{4}}+6r(r/21)^{\frac{\log_2(r/21)}{4}}+4r(r/14)^{\frac{\log_2(r/14)}{4}}&\le
8r(r/14)^{\frac{\log_2(r/14)}{4}}+6r(r/14)^{\frac{\log_2(r/14)}{4}}+4r(r/14)^{\frac{\log_2(r/14)}{4}}\\
&=18r(r/14)^{\frac{\log_2(r/14)}{4}}=2^{\frac{(\log_2(r/14))^2}{4}+\log_2(r)+\log_2(18)}.
\end{align*}
As above, we use the fact that $r^{\log_2(r)/4}=2^{(\log_2(r))^2/4}$. We show that
\begin{equation}\label{mondaymondayyy}
\frac{(\log_2(r/14))^2}{4}+\log_2(r)+\log_2(18)\le\frac{(\log_2(r))^2}{4},
\end{equation}
from which~\eqref{bounds:technical2} immediately follows. Rearranging the summands in~\eqref{mondaymondayyy} and using $\log_2(r/14)=\log_2(r)-\log_2(14)$, we obtain the equivalent inequality
$$\left(\frac{\log_2(14)}{2}-1\right)\log_2(r)\ge \log_2(18)+\frac{(\log_2(14))^2}{4},$$
that is,
\begin{equation}\label{mondayneww}\log_2(r)\ge \frac{\log_2(18)+\frac{(\log_2(14))^2}{4}}{\frac{\log_2(14)}{2}-1}=8.62.
\end{equation}
As  $\log_2(r)\ge 10.96$,~\eqref{mondayneww} is satisfied.

We  prove part~\eqref{bounds:technical3}. As $r\ge 2\,000$, we have
\begin{align*}
(r/10)^{\frac{\log_2(r/10)}{4}}&=
2^{-\frac{\log_2(r/10)}{4}}
(r/5)^{\frac{\log_2(r/10)}{4}}\le 2^{-\frac{\log_2(2\,000/10)}{4}}(r/5)^{\frac{\log_2(r/10)}{4}} \\
&=2^{-\frac{\log_2(200)}{4}}(r/5)^{-\frac{1}{4}}(r/5)^{\frac{\log_2(r/5)}{4}}\le
2^{-\frac{\log_2(200)}{4}}(2\,000/5)^{-\frac{1}{4}}(r/5)^{\frac{\log_2(r/5)}{4}}\\
&\le 2^{-4}(r/5)^{\frac{\log_2(r/5)}{4}},
\end{align*}
where the last inequality follows with a computation. Therefore,
\begin{align*}
(2r+10)(r/10)^{\frac{\log_2(r/10)}{4}}+(r/5+5)(r/5)^{\frac{\log_2(r/5)}{4}}
&\le
\left(\frac{13}{40}r+\frac{45}{8}\right)(r/5)^{\frac{\log_2(r/5)}{4}}\le 2^{-1}r(r/5)^{\frac{\log_2(r/5)}{4}}.\\
\end{align*}
Recall $r^{\log_2(r)/4}=2^{(\log_2(r))^2/4}$. We show that
\begin{equation}\label{mondaymondayyyo}
-1+\log_2(r)+\frac{(\log_2(r/5))^2}{4}\le\frac{(\log_2(r))^2}{4},
\end{equation}
from which~\eqref{bounds:technical3} immediately follows. Rearranging the summands in~\eqref{mondaymondayyyo} and using $\log_2(r/5)=\log_2(r)-\log_2(5)$, we obtain the equivalent inequality
$$\left(\frac{\log_2(5)}{2}-1\right)\log_2(r)\ge \frac{(\log_2(5))^2}{4}-1,$$
that is,
\begin{equation}\label{mondaynewww}\log_2(r)\ge \frac{-1+\frac{(\log_2(5))^2}{4}}{-1+\frac{\log_2(5)}{2}}=\frac{\log_{2}(5)}{2}+1=2.16.
\end{equation}
As $r\ge 2\,000$, we have $\log_2(r)\ge 10.96$ and hence~\eqref{mondaynewww} is satisfied.

We  prove part~\eqref{bounds:technical4}. We have
\begin{align*}
2p(r/p)^{\log_2(r/p)}&=
2^{1+\log_2(p)+\frac{(\log_2(r/p))^2}{4}}=2^{1+\log_2(r)+\frac{(\log_2(r))^2}{4}+\frac{(\log_2(p))^2}{4}-\frac{\log_2(p)\log_2(r)}{2}}.
\end{align*} 
Therefore,~\eqref{bounds:technical4} is equivalent to the inequality
\begin{equation*}
1+\log_2(r)+\frac{(\log_2(r))^2}{4}+\frac{(\log_2(p))^2}{4}-\frac{\log_2(p)\log_2(r)}{2}
\le\frac{(\log_2(r))^2}{4}.
\end{equation*}
In turn, this is equivalent to
$$\left(\frac{\log_2(p)}{2}-1\right)\log_2(r)\ge \frac{(\log_2(p))^2}{4}+1,$$
that is,
\begin{equation}\label{mondaynewwwx}\log_2(r)\ge \frac{1+\frac{(\log_2(p))^2}{4}}{-1+\frac{\log_2(p)}{2}}\ge
\frac{\log_2(p)}{2}+1.
\end{equation}
As $r\ge 2p$, we have $\log_2(r)\ge \log_2(p)+1$ and hence~\eqref{mondaynewwwx} is satisfied.

\thebibliography{1}
\bibitem{BEJ}A.~Ballester-Bolinches, R.~Esteban-Romero, P.~Jim\`enez-Seral, Bounds on the Number of Maximal Subgroups of Finite Groups: Applications, \textit{Mathematics} \textbf{10} (2022), 1--25.
\bibitem{BPS}A.~V.~Borovik, L.~Pyber, A.~Shalev, Maximal subgroups in finite and profinite groups, \textit{Trans. Amer. Math. Soc.} \textbf{348} (1996), 3745--3761.
\bibitem{BCM}Y.~Bugeaud, Z.~Cao, Zhenfu, M.~Mignotte,
On simple $K_4$-groups, \textit{J. Algebra} \textbf{241} (2001), 658--668. 
\bibitem{magma}  C. Bosma, J. Cannon, C. Playoust, The Magma algebra system. I. The user language, \textit{J. Symbolic Comput.} \textbf{24} (1997), 235--265.

\bibitem{atlas} J.~H.~Conway, R.~T. Curtis, S.~P.~Norton, R.~A.~Parker and R.~A.~Wilson, An $\mathbb{ATLAS}$ of Finite Groups, Oxford University Press, Eynsham, 1985.
\bibitem{GMN}R.~M.~Guralnick, G.~Malle, G.~Navarro, Self-normalizing Sylow subgroups,
\textit{Proc. Amer. Math. Soc.} \textbf{132} (2004), no. 4, 973--979.
\bibitem{LPS}M.~W.~Liebeck, L.~Pyber, A.~Shalev, On a conjecture of G. E. Wall, \textit{J. Algebra} \textbf{317} (2007), 184--197.
\bibitem{Sh}A.~Shalev, Growth functions, p-adic analytic groups, and groups of finite coclass, \textit{J. London
Math. Soc. (2)} \textbf{46} (1992), 111--122. 
\bibitem{Sp}P.~Spiga,  An explicit upper bound on the number of subgroups of a finite group, \textit{J. Pure and Applied Algebra} \textbf{227}, \href{https://doi.org/10.1016/j.jpaa.2022.107312}{doi:j.jpaa.2022.107312}.

\end{document}